\numberwithin{equation}{section}
\newtheorem{theorem}{Theorem}[section]
\newtheorem{lemma}[theorem]{Lemma}
\newtheorem{corollary}[theorem]{Corollary}
\newtheorem{proposition}[theorem]{Proposition}
\theoremstyle{definition}
\newtheorem{definition}[theorem]{Definition}
\newtheoremstyle{myremark}
{}{}
{\itshape}
{}
{\itshape}
{.}
{ }
{}
\theoremstyle{myremark}
\newtheorem{remark}[theorem]{Remark}
\theoremstyle{definition}
\newtheorem*{remark*}{Note}
\newtheorem*{observation}{Observation}
\numberwithin{equation}{section}
\begin{document}	
	
	\title{Asymptotics for the percolation threshold of finitary random interlacements in four and higher dimensions}

\author{Yijie Bi, Zhenhao Cai, Xinyi Li, Bal\'azs R\'ath, Yuan Zhang }

\address[Yijie Bi]{School of Mathematical Sciences, Peking University}
		\email{2200010925@stu.pku.edu.cn}

\address[Zhenhao Cai]{Faculty of Mathematics and Computer Science, Weizmann Institute of Science}
		\email{zhenhao.cai@weizmann.ac.il}
	
\address[Xinyi Li]{Beijing International Center for Mathematical Research, Peking University}
		\email{xinyili@bicmr.pku.edu.cn}

\address[Bal\'azs R\'ath]{Department of Stochastics, Institute of Mathematics, Budapest University of Technology and Economics; HUN-REN Alfr\'ed R\'enyi Institute of Mathematics}
\email{rathb@math.bme.hu}

\address[Yuan Zhang]{Center for Applied Statistics and School of Statistics, Renmin University of China} \email{zhang\_probab@ruc.edu.cn}
        
\begin{abstract}
    We establish sharp asymptotic bounds for the critical intensity of the Finitary Random Interlacements (FRI) model in four and higher dimensions with general trajectory length distributions. Our proof reveals that the construction of near-critical FRI clusters in four and higher dimensions is essentially analogous to a Galton-Watson process, whose expected number of offspring corresponds to the capacity of a random walk killed at the given length. 
\end{abstract}
	\maketitle
\section{Introduction}

Finitary Random Interlacements (FRI), introduced by Bowen in \cite{bowen2019finitary}, is a Poisson cloud of random walk trajectories on $\mathbb Z^d$ where the trajectory lengths follow a specified distribution. As the expected length varies from zero to infinity, the model interpolates between the renowned Bernoulli percolation and Random Interlacements (RI). The latter was introduced by Sznitman in \cite{sznitman2010vacant} and has been extensively studied over the last decade (e.g. \cite{teixeira2011fragmentation}, \cite{10.1214/10-AOP545}, \cite{sznitman2017disconnection}, \cite{sznitman2012decoupling}, \cite{10.1214/09-AOP450}, \cite{10.1214/11-AOP683}; see \cite{drewitz2014introduction} for a self-contained introduction).

The FRI model, denoted by $\mathcal I^{u,\rho}$ (where $u>0$ is a fixed number and $\rho$ is a distribution supported on non-negative integers), can be constructed as follows. Independently for each vertex $x\in\mathbb Z^d$($d\ge3$), we initiate from $x$ a Poisson$\left(\frac u{\mu_1(\rho)+1}\right)$-distributed number of independent simple random walks $(X_n)_{n\ge0}$ killed at time $T\sim\rho$, where $\mu_i(\rho)$, $i\in\mathbb N$ is the $i$-th moment of $\rho$. The process $\mathcal I^{u,\rho}$ is then constructed as the subgraph of $\mathbb Z^d$ whose edges are those traversed by the aforementioned random walk trajectories. The precise definition(s) of FRI can be found in Section 2 of this paper. The parameter $u$ is the analogue of the intensity parameter in RI, which corresponds to the expected total occupation time at each vertex by the random walk trajectories. The length distribution $\rho$ is typically taken to be geometric in the literature, i.e., one considers the model $\mathcal {FI}^{u,T}\coloneq\mathcal I^{u,{\rm Geo}(\frac{1}{T+1})}$ for $T\ge0$, where ${\rm Geo}(\lambda)$ represents the geometric distribution with success probability $\lambda$; also, the model $\mathcal I^{u,\delta_n}$, where $\delta_n$ is the Dirac measure at $n$, has been studied as a finite-range approximation of RI (see \cite{duminil2023finite} for an example). In this paper, we consider general length distributions $\rho$ on $\mathbb N$ satisfying a mild restriction, which encompass two common families of distributions, namely the geometric distributions and the Dirac mass.

For fixed $u$ and varying $\rho$, the behavior of $\mathcal I^{u,\rho}$ constrained to any finite subset of $\mathbb Z^d$ resembles that of RI with intensity $u$, denoted by $\mathcal I^u$, when $\mu_1(\rho)\to\infty$. In the geometric setup, this property was discussed in \cite{bowen2019finitary} and made rigorous in \cite{cai2021non}. Moreover, couplings between FRI and RI have proven instrumental in the study of the latter. For example, in \cite{10.1214/23-EJP950}, it was established that RI is a factor of i.i.d.\ using the convergence in law of a sequence of similarly defined finite-range models to RI. Moreover, recent breakthroughs concerning the sharpness of phase transition of the vacant set of RI \cite{duminil2024characterization}, \cite{duminil2023finite} and \cite{duminil2023phase} rely crucially on couplings both between FRI and RI and between FRIs of various lengths to overcome obstacles imposed by the long-range dependence of RI.

In contrast to RI, where $\mathcal I^u$ almost surely percolates for all $u>0$ and $d\ge3$, FRI undergoes phase transitions with respect to both parameters as a bond percolation model on $\mathbb Z^d$. On the one hand, for fixed $u>0$, the percolation properties of $\mathcal {FI}^{u,T}$ as a function of $T$ are studied in several papers. Bowen proved the existence of infinite clusters (connected components) in $\mathcal {FI}^{u,T}$ for any fixed $u>0$ and sufficiently large $T$ in \cite{bowen2019finitary}. It was shown in \cite{procaccia2021percolation} that $\mathcal {FI}^{u,T}$ has a non-trivial phase transition with respect to $T$ for all $u>0$ and $d\ge3$. However, it was proved in \cite{cai2021non} that $\mathcal{FI}^{u,T}$ is non-monotone for $T\in(0,\infty)$ due to the fact that the expected number of trajectories from each vertex decreases as $T$ increases. As a result, the existence and uniqueness of a critical length parameter $T_c$ remain open.

On the other hand, if we fix the length distribution $\rho$ and study the percolation as $u$ varies, the phase transition is unique, because $\mathcal I^{u,\rho}$ is increasing in $u$. For FRI with geometric length distributions, this phase transition has been studied by several papers. In \cite{cai2021some}, it was proved that for each fixed $T>0$, there exists $u_*(d,T)\in (0,\infty)$ such that the probability of the existence of an infinite cluster in $\mathcal{FI}^{u,T}$ is zero for $u\in(0,u_*(d,T))$ and positive for $u\in(u_*(d,T),\infty)$. For the asymptotic of $u_*(d,T)$, it was shown in \cite{cai2021some} that $u_*(d,T)\cdot T\to-\log(1-p_d^c)/2$ as $T\to 0$, where $p_d^c$ is the critical parameter for Bernoulli bond percolation on $\mathbb{Z}^d$. Moreover, it was proved in \cite{cai2023exact} that for $d\ge 3$, there exist constants $C_{\dagger}(d)>c_{\dagger}(d)>0$ such that for all sufficiently large $T$, 
\begin{equation}\label{eq:previous_bound}
    c_{\dagger}f_d(T)\le u_*(d,T)\le C_{\dagger}f_d(T),
\end{equation}
where $f_d(T)$ is given by
\begin{equation*}
    f_d(T):=\left\{
        \begin{aligned}
            T^{-1/2},\text{ }&d=3;\\
            T^{-1}\log T,\text{ }&d=4;\\
            T^{-1},\text{ }&d\geq 5.
        \end{aligned}
        \right.
\end{equation*}
Interestingly, this implies that FRI achieves supercriticality in percolation with vanishing density.
It was conjectured in \cite{cai2023exact} that $u_*(d,T)$ admits an asymptotic limit, i.e., the coefficients $C_\dagger$ and $c_\dagger$ can be arbitrarily close. Estimates similar to (\ref{eq:previous_bound}) are established for the percolation of Wiener sausages with fixed length in \cite{erhard2015asymptotics}, and the analogous conjecture for the critical threshold of Wiener sausages was posed in Section 1.4 of that paper.

In this paper, we confirm the conjecture for FRI when $d\geq4$, and further generalize it to FRI with general length distributions. 

Let $\{ 0\stackrel{\mathcal I^{u,\rho}}{\longleftrightarrow}\infty\}$ be the event that the origin is in an infinite cluster of $\mathcal I^{u,\rho}$. For any length distribution $\rho$, we define the critical intensity of the FRI model $\mathcal I^{u,\rho}$ by
$$u_*(\rho):=\inf\{u\geq0:\mathbb{P}[\bm{0}\stackrel{\mathcal I^{u,\rho}}{\longleftrightarrow}\infty]>0\}.$$ 
We aim to establish the asymptotic of $u_*$ for a family of length distributions $(\rho_n)_{n\in \mathbb{N}}$ with expectations tending to infinity. For technical reasons, we need certain restriction on the fluctuation of each $\rho_n$. Specifically,
\begin{definition}\label{def:appropriate_family}
        We say a family $(\rho_n)_{n\in\mathbb{N}}$ of distributions on $\mathbb{N}$ is appropriate if $\mu_1(\rho_n)\to\infty$ as $n\to\infty$, and 
        for all $\epsilon>0$, there exists $C=C(\epsilon,(\rho_n)_{n\in\mathbb{N}})<\infty$ satisfying
        \begin{equation}\label{eq:appropriate}
            \vartheta(\rho_n,C)\coloneq\sum_{L=C\mu_1(\rho_n)}^\infty\frac{\rho_n(L)\cdot L^2}{\mu_2(\rho_n)}\leq\epsilon,\forall n\in \mathbb{N},
        \end{equation} 
        where $\mu_i$ means the $i$-th moment.
\end{definition}
\begin{remark}\label{rmk:geodirac}
    For any random variable $\xi$ supported on $(0,\infty)$ with finite second moment and any sequence of positive numbers $l(n)\to\infty$, if we define $\rho_n$ to be the law of $\lfloor l(n)\cdot\xi\rfloor$, then (\ref{eq:appropriate}) holds. In particular, $(\rho_n)_{n\in\mathbb N}=(\delta_n)_{n\in\mathbb N}$ corresponds to $\xi\equiv1$ and $l(n)=n$; $(\rho_n)_{n\in\mathbb N}=({\rm Geo}(\frac1{n+1}))_{n\in\mathbb N}$ corresponds to $\xi\sim{\rm Exp}(1)$ (the exponential distribution with killing rate $1$, see Section 2 for definition) and $l(n)=\log^{-1}(1+\frac1n)$. Thus, $(\delta_n)_{n\in\mathbb N}$ and $({\rm Geo}(\frac1{n+1}))_{n\in\mathbb N}$ are both appropriate families.
\end{remark}
\begin{remark}
    This restriction may not be optimal in the 4-dimensional case. In fact, it can be relaxed to \begin{equation}\label{eq:appropriate'}              \sum_{L=C\mu_1(\rho_n)}^\infty\frac{\rho_n(L)\cdot L^2}{\mu_2(\rho_n)}\cdot\frac{\log \mu_1(\rho_n)}{\log L}\leq\epsilon.
    \end{equation}                                  
\end{remark}
The main result is the following theorem.
\begin{theorem}\label{thm:main}
    For any $d\ge4$ and any appropriate family of distributions $(\rho_n)_{n\in\mathbb{N}}$,
        \begin{equation}
            \label{eq:main5}\lim_{n\to\infty}u_*(\rho_n)\cdot\frac{\mu_2(\rho_n)}{\mu_1(\rho_n)(1+\log\mu_1(\rho_n)\cdot\mathbbm{1}_{d=4})}\cdot \varepsilon_d=1.
        \end{equation}
    Here, $\varepsilon_d$ is the coefficient in the asymptotic capacity of random walks, i.e.,
    $$\varepsilon_d=\lim_{T\to\infty}\frac{\mathbb E[{\rm cap}(X[0,T])]}{T(1+\log T\cdot\mathbbm 1_{d=4})^{-1}},$$
    where ${\rm cap}(A)$ for $A\subset\mathbb Z^d$ represents the capacity of the set $A$ (see Section 2 for the definition). (The existence of $\varepsilon_d\in(0,\infty)$ when $d\ge5$ was proved in \cite{jain1968range}; while one has $\varepsilon_4=\frac{\pi^2}8$ according to \cite[Corollary 1.4]{10.1214/18-AOP1288}.)
\end{theorem}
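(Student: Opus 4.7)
The plan is to interpret the exploration of an FRI cluster as a branching process whose offspring distribution is approximately Poisson. I would explore the cluster of the origin by successively listing trajectories $\gamma$ of the Poisson cloud and, for each $\gamma$, counting the FRI trajectories that intersect it. By Poisson thinning together with the identity
\begin{equation*}
    \sum_{x\in\mathbb Z^d}\mathbb P_x\bigl[X[0,L]\cap\gamma\neq\emptyset\bigr] \;=\; (1+o(1))\,L\cdot \mathrm{cap}(\gamma),
\end{equation*}
obtained by a last-exit/time-reversal decomposition (so that the inner sum equals $\sum_{y\in\gamma}\sum_{k\le L}\mathbb P_y[\tau_\gamma^+\ge k]\approx L\cdot\mathrm{cap}(\gamma)$), the number of children of a given $\gamma$ is Poisson of mean $(1+o(1))\,u\cdot\mathrm{cap}(\gamma)$. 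A size-biasing consideration shows that, conditional on being a child, a trajectory has length distributed as $\tilde\rho_n(L)=L\rho_n(L)/\mu_1(\rho_n)$, since longer trajectories are more likely to intersect a fixed set. Substituting the capacity asymptotic $\mathbb E[\mathrm{cap}(X[0,L])]\sim \varepsilon_d L/(1+\log L\cdot\mathbbm 1_{d=4})$ and using the appropriateness hypothesis (\ref{eq:appropriate}) to discard the contribution of $L\ge C\mu_1(\rho_n)$, the effective offspring mean becomes
\begin{equation*}
    m_n(u)\;=\;(1+o(1))\cdot u\cdot\varepsilon_d\cdot\frac{\mu_2(\rho_n)}{\mu_1(\rho_n)\bigl(1+\log\mu_1(\rho_n)\cdot\mathbbm 1_{d=4}\bigr)},
\end{equation*}
so that $m_n(u)=1$ is precisely the critical equation predicted by (\ref{eq:main5}).

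\textbf{Lower bound on $u_*$.}
Fix $u$ with $m_n(u)<1-\delta$ for all large $n$. I would layer the cluster of $\mathbf 0$ by graph distance in the trajectory-intersection graph and bound the expected number of trajectories at each depth. Since the number of FRI trajectories intersecting any already-discovered set is Poisson (with a mean bounded by $u$ times the capacity of that set, without any independence assumption), one gets
$\mathbb E[\#\text{trajectories in the cluster of }\mathbf 0]\le C\sum_{k\ge 0}m_n(u)^k<\infty$; a finite expected cluster size forbids percolation and yields $\liminf_n u_*(\rho_n)\cdot\varepsilon_d\cdot\mu_2(\rho_n)/(\mu_1(\rho_n)(1+\log\mu_1(\rho_n)\cdot\mathbbm 1_{d=4}))\ge 1$.

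\textbf{Upper bound on $u_*$.}
For the matching upper bound, fix $u$ with $m_n(u)>1+\delta$ and embed a supercritical Galton–Watson tree in the cluster of a trajectory through $\mathbf 0$. The construction would proceed in three ingredients: (i) truncate to trajectories of length $L\le C\mu_1(\rho_n)$, which by (\ref{eq:appropriate}) costs only an $o(1)$ fraction of $m_n(u)$; (ii) at each branching step, retain only a ``well-separated'' subcollection of children whose starting positions lie in disjoint mesoscopic boxes of side much larger than the typical trajectory diameter $\asymp\sqrt{\mu_1(\rho_n)}$, so that the Poisson clouds governing distinct branches and their descendants are mutually independent and independent of the past; (iii) prove that with high probability $\mathrm{cap}(X[0,L])\ge (1-o(1))\varepsilon_d L/(1+\log L\cdot\mathbbm 1_{d=4})$ for a walk of length $L\le C\mu_1(\rho_n)$, via standard concentration of the capacity of the range in $d\ge 5$ and the finer estimate of \cite{10.1214/18-AOP1288} in $d=4$. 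Together (i)–(iii) produce a genuine Galton–Watson offspring mean $(1-o(1))\,m_n(u)>1$, whose positive survival probability supplies an infinite cluster.

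\textbf{Main obstacle.}
The delicate point is keeping a sharp (rather than order-of-magnitude) constant in $d=4$, where the $\log L$ correction in the capacity asymptotic couples the truncation scale to the target threshold. Truncating at $L\le C\mu_1(\rho_n)$ replaces $\log L$ with a quantity differing from $\log\mu_1(\rho_n)$ only by an additive $O(1)$, so maintaining a $(1\pm o(1))$ factor in the offspring mean forces a careful choice of the truncation constant and motivates the strengthened appropriateness condition (\ref{eq:appropriate'}) noted in the remark. A secondary difficulty is in step (ii): to obtain independence while preserving the constant $\varepsilon_d$, the well-separation must not prune away more than an $o(1)$ fraction of the expected children, which requires balancing the mesoscopic box size against the volume of starting points whose random walk of length $\le C\mu_1(\rho_n)$ has non-negligible hitting probability for the parent. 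In $d\ge 5$ both issues are less severe and the argument is correspondingly cleaner.
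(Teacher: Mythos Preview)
Your lower-bound sketch is essentially the paper's argument: layer the cluster of $\mathbf 0$, dominate each layer by a fresh Poisson cloud, and show that the expected ``size'' of the $k$-th layer contracts geometrically when $m_n(u)<1$. The paper tracks not the raw number of trajectories but the functional $\kappa^{(\rho)}(A)=\sum_{x\in A}\sum_{m\ge 0}\frac{\mu_1^{(m)}(\rho)}{\mu_2(\rho)}P^x[\widetilde H_A>m]$, which is exactly the expected number of children of a set under the size-biased length law $\tilde\rho_n$ you identified; your heuristic and theirs coincide.

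The upper bound, however, has a real gap in your step (ii). You claim that by keeping only children whose starting points lie in disjoint mesoscopic boxes, ``the Poisson clouds governing distinct branches \emph{and their descendants} are mutually independent.'' This is not true: well-separation at one generation does not propagate. A child in box $B_1$ has grandchildren that live in a ball of radius $\asymp\sqrt{\mu_1(\rho_n)}$ around $B_1$, and after $k$ generations the descendants of two initially separated children occupy overlapping regions of $\mathbb Z^d$ and hence query the same portion of the Poisson cloud. Any attempt to force perpetual separation (e.g.\ confining each branch to a cone) costs a fixed constant factor per generation and destroys the sharp constant $\varepsilon_d$. So a direct Galton--Watson embedding cannot, as described, produce an infinite cluster while keeping the offspring mean at $(1-o(1))m_n(u)$.

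The paper's resolution is to abandon the genealogical organization in favor of a spatial one: a coarse-graining/renormalization on boxes of side $R_n=\lfloor\sqrt{\mu_1(\rho_n)}\rfloor$. In each box one runs only a \emph{bounded} number $\alpha$ of branching layers (so the GW comparison you have in mind is used, but only locally), and success means producing a ``seed'' in every neighboring box. Failures ruin a bounded neighborhood, and because typical trajectories have diameter $\le MR_n$, the dependence between boxes is finite-range. The resulting site process on the coarse lattice is then dominated by a supercritical Bernoulli percolation via Liggett--Schonmann--Stacey. This box-to-box scheme is the missing ingredient; your truncation (i) and capacity concentration (iii) are both needed and are used inside each round, but they are not by themselves enough to manufacture independence across infinitely many generations. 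Your ``secondary difficulty'' is in fact the central one.
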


In particular, we obtain the following sharp asymptotics of $u_*(d,n)$, the threshold under geometric distribution (see above \eqref{eq:previous_bound} for notation).
\begin{corollary} The critical intensity of FRI of geometric length admits an asymptotic limit:
    $$\lim_{n\to\infty}u_*(d,n)\cdot2n\cdot \varepsilon_d=1$$
    for all $d\geq5$, and
    $$\lim_{n\to\infty}u_*(4,n)\cdot n\log^{-1}n\cdot\frac {\pi^2}{4}=1.$$
\end{corollary}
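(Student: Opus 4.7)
The plan is to deduce the corollary by direct specialization of Theorem \ref{thm:main} to the length law $\rho_n \coloneq {\rm Geo}(\frac{1}{n+1})$, which by construction coincides with the distribution defining $\mathcal{FI}^{u,n}$, so $u_*(d,n) = u_*(\rho_n)$. Remark \ref{rmk:geodirac} already records that $({\rm Geo}(\frac{1}{n+1}))_{n\in\mathbb{N}}$ is an appropriate family in the sense of Definition \ref{def:appropriate_family}, so Theorem \ref{thm:main} applies verbatim and the whole task reduces to computing $\mu_1(\rho_n)$ and $\mu_2(\rho_n)$ and substituting into \eqref{eq:main5}.

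Next, I would compute the first two moments of $\rho_n$. With $\lambda_n \coloneq \frac{1}{n+1}$ and $\rho_n(k) = (1-\lambda_n)^k \lambda_n$ for $k \ge 0$, standard generating-function computations give
\[
\mu_1(\rho_n) = \frac{1-\lambda_n}{\lambda_n} = n, \qquad \mu_2(\rho_n) = \frac{(1-\lambda_n)(2-\lambda_n)}{\lambda_n^2} = n(2n+1).
\]
In particular,
\[
\frac{\mu_2(\rho_n)}{\mu_1(\rho_n)} = 2n+1 \sim 2n \quad \text{as } n \to \infty.
\]

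Finally, I would substitute these moments into \eqref{eq:main5}. For $d \ge 5$ the indicator $\mathbbm{1}_{d=4}$ vanishes, so \eqref{eq:main5} becomes
\[
\lim_{n\to\infty} u_*(d,n) \cdot (2n+1) \cdot \varepsilon_d = 1,
\]
and replacing $2n+1$ by $2n$ (legitimate since their ratio tends to $1$) yields the first asserted limit. For $d = 4$ one has $\varepsilon_4 = \pi^2/8$, and \eqref{eq:main5} reads
\[
\lim_{n\to\infty} u_*(4,n) \cdot \frac{2n+1}{1+\log n} \cdot \frac{\pi^2}{8} = 1,
\]
which, via $\frac{2n+1}{1+\log n} \sim \frac{2n}{\log n}$, rearranges to the second asserted limit $\lim_{n\to\infty} u_*(4,n)\cdot n \log^{-1} n \cdot \frac{\pi^2}{4} = 1$.

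There is no real obstacle here: once Theorem \ref{thm:main} is granted, the only content of the corollary is the textbook moment calculation for the geometric distribution and an elementary asymptotic simplification. The only point worth stating explicitly is the convention $\rho_n(k) = (1-\lambda_n)^k\lambda_n$, $k\ge 0$, used so that $\mathcal{I}^{u,\rho_n}$ matches $\mathcal{FI}^{u,n}$ as defined in the introduction.
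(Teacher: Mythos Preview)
Your proposal is correct and is exactly the intended derivation: the paper states the corollary without proof, as an immediate specialization of Theorem~\ref{thm:main} to $\rho_n={\rm Geo}(\frac1{n+1})$ via Remark~\ref{rmk:geodirac}, and your moment computations $\mu_1(\rho_n)=n$, $\mu_2(\rho_n)=n(2n+1)$ together with the asymptotic simplifications are precisely what is needed.
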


We now give a brief overview of the proof of Theorem \ref{thm:main}. Let 
\begin{equation}\label{eq:def_un}
    u_n:=\frac{\mu_1(\rho_n)(1+\log\mu_1(\rho_n)\mathbbm{1}_{d=4})}{\varepsilon_d\mu_2(\rho_n)}
\end{equation}denote the order of $u_*(\rho_n)$, and for a fixed $\epsilon>0$, let 
\begin{equation}\label{eq:upm}
    u_n^{\pm}:=(1\pm \epsilon)u_n.
\end{equation}
To derive Theorem \ref{thm:main}, it suffices to show that for all large $n$,
\begin{equation}\label{new1.4}
 u_n^-  \le  u_*(\rho_n)\le u_n^+. 
\end{equation}
In other words, we need to show that $\mathcal I^{u_n^+,\rho_n}$ (resp. $\mathcal I^{u_n^-,\rho_n}$) is supercritical (resp. subcritical). 

To prove the subcriticality of $\mathcal I^{u_n^-,\rho_n}$, we explore the FRI cluster containing the origin layer by layer. Precisely, we call the origin the $0$-th layer; given the $i$-th layer, the $(i+1)$-th layer is given by the union of trajectories hitting the $i$-th layer but not the previous ones. In particular, the typical number of trajectories in one layer is approximately the intensity times the capacity of the previous layer. Therefore, when taking $u=u_n^-$, the number of discovered trajectories in the aforementioned exploration process can be dominated by a subcritical Galton-Watson process, which dies out in finite time almost surely. 

To verify that $\mathcal I^{u_n^+,\rho_n}$ is supercritical, we construct the infinite cluster through a classic coarse-graining argument inspired by \cite{penrose1993spread}, where a similar asymptotic upper bound for the critical intensity of spread-out percolation was proved. In our setup, some additional delicate treatment is needed to handle the correlation generated by random walk trajectories. Specifically, we introduce a truncated version of $\mathcal I^{u_n^+,\rho_n}$ which ignores certain types of trajectories (e.g., those with atypically large diameters or long durations), and run an exploration process to show the supercriticality on this truncated version instead. We divide the space $\mathbb{Z}^d$ into disjoint boxes of side length $R_n\coloneq\lfloor\sqrt{\mu_1(\rho_n)}\rfloor$ (the correlation length of a random walk killed at time $T\sim \rho_n$, see also (\ref{eq:intro_Rn})). Next, we sample the cluster containing $\bm{0}$ (more precisely, the center point of the box around $\bm 0$, see (\ref{eq:intro_zn})) with a specific part of trajectories within the $R_n$-box around $\bm{0}$, and require it to satisfy several conditions including a lower bound on the capacity and some restriction on the relative position of any two trajectories (we call this cluster a ``seed''). We then try to extend this seed to all of the neighboring boxes, and call the box around $\bm0$ a ``good'' box if a seed is sampled in each of the neighboring boxes. Next, we resume the exploration from one of the neighboring boxes, and call it good if new seeds are sampled for all its unexplored neighbors. When a bad box appears, to ensure independence we give up the exploration of all unexplored boxes within a neighborhood of radius $2\gamma R_n$ concentric with this bad box that are neighbors of good ones where $\gamma$ is an integer defined in (\ref{eq:intro_gamma}) so large that trajectories in subsequent exploration from boxes that are not ruined do not intersect those explored from the current bad box (thanks to the truncation).

Note that in the exploration algorithm described above, we are able to choose parameters properly so that the conditional probability of each step (i.e., extension to neighboring boxes) is sufficiently close to $1$.  Consequently, the exploration, after proper rescaling, may stochastically dominate a finitely dependent percolation configuration with sufficiently high density, which is supercritical by an application of the Liggett--Schonmann--Stacey domination. 

We now briefly comment on the truncation. Note that in the exploration algorithm, it would be easier to obtain the desired independence if we ignored more trajectories. Meanwhile, such modification is not allowed to increase the critical intensity by a constant factor (otherwise, it will violate (\ref{new1.4})). These requirements exhibit opposite directions of monotonicity, which entails a delicate truncation scheme, which stands as one of the main technical novelties of this paper.

Let us also quickly remark on the $d=3$ case. Since it is much more likely for two random walk trajectories to intersect in three dimensions, the behavior of near-critical FRI clusters can no longer be characterized by Galton-Watson processes, and the corresponding asymptotic threshold is actually determined by a different mechanism. In fact, in a paper in progress \cite{3D}, the pre-factor of the asymptotic threshold is upper and lower bounded by two (plausibly equal) critical thresholds of finitary Brownian interlacements (FBI), a counterpart of FRI in the continuum. A model similar to FBI in which trajectories are of fixed length is studied in \cite{erhard2015asymptotics}, where the existence of a non-trivial phase transition is proved (c.f.\ Theorem 1.3 therein).

The remaining sections are organized as follows. In Section 2, we introduce the notations and some preliminary results. In Sections 3, we prove the lower bound of (\ref{new1.4}). Sections 4 and 5 are dedicated to the corresponding upper bound. In Section 4, we introduce the exploration algorithm and reduce the supercriticality of $\mathcal I^{u_n^+,\rho}$ to the fact that the conditional probability of the extension to a neighboring box in the algorithm can be arbitrarily close to $1$ (c.f. Proposition \ref{uniform_low}). Section 5 proves Proposition \ref{uniform_low} with various coupling techniques.

\section{Notations and Preliminaries}
    We write $\mathbb{N}=\{0,1,2,...\}$ for the set of natural numbers. For a real number $a$, write $\lfloor a\rfloor$ for its integer part, and for real numbers $b,c$, we write $b\wedge c$ and $b\vee c$ for the minimum and maximum of $b$ and $c$ respectively. Denote the $\ell^\infty$ norm on $\mathbb{Z}^d$ by $|\cdot|$ and the $\ell^1$ norm by $|\cdot|_1$. For two non-empty subsets $A,B\subset\mathbb{Z}^d$, define the distance between them by ${\rm d}(A,B):=\min_{x\in A,y\in B}|x-y|$, and write ${\rm d}(\{x\},B)$ as ${\rm d}(x,B)$ for all $x\in\mathbb{Z}^d$. The diameter of a finite nonempty set $A\subset\mathbb Z^d$ is defined by
    ${\rm diam}(A):=\max_{y,z\in A}{\rm d}(y,z).$ For all $x\in\mathbb{Z}^d$ and $r\in(0,\infty)$, we denote the subset of $\mathbb Z^d$ inside the box of side length $2r$ centered at $x$ by $B(x,r)=B_x(r)\coloneq\{y\in\mathbb{Z}^d:|x-y|\leq r\}$; for all $A\subset\mathbb Z^d$, let $B(A,r)\coloneq \bigcup_{x\in A}B(x,r)$ be the collection of vertices in the $r$-neighborhood of $A$. For all $n\in\mathbb{N}$ and $A\subset\mathbb{Z}^d$, let $nA=\{nx:x\in A\}$. In particular, we will consider the set $n\mathbb Z^d$ which will be referred to as a lattice. For all $x,y\in n\mathbb Z^d$, they are called nearest neighbors if $|x-y|_1=n$. A nearest-neighbor path on $n\mathbb Z^d$ is a sequence of vertices on $n\mathbb Z^d$ such that the adjacent vertices are nearest neighbors.
    For $\lambda\in(0,1]$, define the geometric distribution ${\rm Geo}(\lambda)$ by ${\rm Geo}(\lambda)(n)=\lambda(1-\lambda)^n$, $n\in\mathbb N$. For $\lambda\in(0,\infty)$, define the exponential distribution ${\rm Exp}(\lambda)$ by the density function $f(x)=\lambda e^{-\lambda x}\mathbbm1_{x>0}$.
    
    From now on, we always assume that the dimension is at least $4$, and often omit $d$ in the notation when defining objects such as random walks and quantities including constants and parameters that do not explicitly depend on $d$. All the results stated in this paper are supposed to hold for all $d\geq4$ unless stated otherwise. The constants $C$ and $C'$ appearing in the paper may implicitly depend on $d$, and may differ from place to place. However, numbered constants such as $C_1,C_2,...$ refer to their first appearance in the text.
    
    \hspace{\fill}
    
    \textbf{Measures on countable spaces}: For an arbitrary countable space $\mathscr X$ equipped with the canonical $\sigma$-field, we denote by $\mathcal{M}(\mathscr X)$ (resp. $\mathcal M^*(\mathscr X)$) the set of all probability measures (resp. finite measures) on $\mathscr X$, and abbreviate $\mathcal M=\mathcal M(\mathbb N)$. For two measures $\nu_1,\nu_2\in\mathcal M(\mathscr X)$, we write $\nu_1\le\nu_2$ if 
    $\nu_1(A)\le \nu_2(A)$ for all $A\subset\mathscr X$.
    For all $\nu\in\mathcal M^*(\mathscr X)$, we define the corresponding probability measure $\nu^0\in\mathcal M(\mathscr X)$ by
    $$\nu^0(A)=\frac{\nu(A)}{\nu(\mathscr X)},\;\forall A\subset\mathscr X.$$
    
    For $\rho\in\mathcal M(\mathscr X),x\in\mathscr X$, let $\rho(x)$ be the mass assigned to $x$. We call $\rho$ a point measure if $\rho(x)\in\mathbb N$ for all $x\in\mathscr X$. Let $\mathcal S$ be a multi-subset of $\mathscr X$. We sometimes identify $\mathcal S$ with its corresponding point measure on $\mathscr X$, i.e., we also use $\mathcal S$ to represent the point measure
    $\sum_{x\in\mathscr X}n_x(\mathcal S)\cdot \delta_x$,
    where $n_x(\mathcal S)$ is the multiplicity of $x$ in $\mathcal S$. Write
    $|\mathcal S|=\sum_{x\in\mathcal S}n_x(\mathcal S)$.
    For a point measure $\mathcal X\in\mathcal M(\mathscr X)$ and $A\subset\mathscr X$, write
    $\mathcal X\cap A=\sum_{x\in A}\mathcal X(x) \cdot\delta_x$.
    
    Let $k$ be a positive integer. For all $\rho\in\mathcal M$, denote the $k$-th moment of $\rho$ by
    \begin{equation}
    \mu_k(\rho):=\displaystyle\sum_{l=0}^\infty l^k\rho(l).
    \end{equation}
    Also, we define the ``tail contribution'' to the $k$-th moment by
    $$\mu_k^{(m)}(\rho)\coloneq\sum_{l=m}^\infty l^k\rho(l),$$
    where $m\in\mathbb N$.

    \hspace{\fill}
    
        \textbf{Trajectories:} We will refer to nearest-neighbor paths on $\mathbb Z^d$ as trajectories. Let $W^{[0,\infty)}$ denote the set of all finite trajectories. For all $\eta\in W^{[0,\infty)}$, write $\eta=(\eta(s))_{0\leq s\leq T}$. Here, $T(\eta):=T$ denotes the length of $\eta$. In addition, we define the range of $\eta$ by ${\rm range}(\eta):=\{\eta(s):0\leq s\leq T\}$. Sometimes, $\eta$ is identified with its range when no confusion arises (e.g. $B(\eta,r)=B({\rm range}(\eta),r)$). For $0\leq a\leq b\leq T$, define the sub-path $\eta[a,b]:=(\eta(a+s))_{0\leq s\leq b-a}$.

    For a sigma-finite point measure $\omega$ on the space $W^{[0,\infty)}$, we write $\eta\in\omega$ if $\omega(\{\eta\})>0$, and define the set of sites visited by the trajectories of $\omega$ by $V(\omega):=\bigcup_{\eta\in\omega}{\rm range}(\eta)$, and let $G(\omega)$ be the subgraph of $\mathbb{Z}^d$ where an edge is open iff it connects two adjacent vertices appearing in one of the trajectories in $\omega$. Sometimes, the notation is slightly abused and the point measure $\omega$ is identified with the corresponding (multi-)subset of $W^{[0,\infty)}$.

    Let $\eta_i\in W^{[0,\infty)}$ be a trajectory with length $T_i$, $i=1,2$. Define $\eta_1*\eta_2$, the concatenation of $\eta_1$ and $\eta_2$, by
    $$\eta_1*\eta_2(s)\coloneq\begin{cases}\eta_1(s),0\leq s\leq T_1;\\\eta'_2(s-T_1),T_1<s\leq T_1+T_2,\end{cases}$$
    where $\eta_2'=(\eta_2(s)-\eta_2(0)+\eta_1(T_1))_{0\leq s\leq T_2}$ is $\eta_2$ translated to the endpoint of $\eta_1$. We write $\eta_1\simeq\eta_2$ if they are a spatial translations of each other.
    If $\eta_1$ and $\eta_2$ are both (one-sided) infinite trajectories, we write $\eta_1\simeq\eta_2$ if $\eta_1[0,T]\simeq\eta_2[0,T]$ for every $T\in\mathbb N$.
    
    For an arbitrary trajectory $\eta$ (finite, infinite) and $A\subset\mathbb{Z}^d$, let 
    \begin{equation}\label{eq:def_hitting}
        \tau_A=\tau_A^\eta\coloneq\inf\{t\in\mathbb N:\eta(t)\in A\}.
    \end{equation}For any $B\subset A$, we say $\eta$ hits $A$ in $B$ if $\tau_A<\infty$ and $\eta(\tau_A)\in B$. Define the set of all finite trajectories hitting $A$ in $B$ by
    \begin{equation}\label{eq:def_cH}
        \mathcal{H}[A;B]:=\{\eta\in W^{[0,\infty)}:\eta\mbox{ hits }A\mbox{ in }B\}.
    \end{equation}
    
    \hspace{\fill}
    
    \textbf{Simple random walk on }$\mathbb{Z}^d$\textbf{ and potential theory:} For $x\in\mathbb Z^d$, we denote by $P^x$ the law of a doubly infinite simple random walk starting from $x$, and denote the walk by $X$.  For all $y,z\in\mathbb Z^d$, let $P^{y,z}$ be the joint law of two independent simple random walks $X^1\sim P^y,X^2\sim P^z$.

    For all $A\subset\mathbb Z^d$, we define the first hitting/entrance time of $A$ by $X$ by
    \begin{align*}\
        H_A&\coloneq\inf\{t\ge0:X_t\in A\},
        \\\widetilde H_A&\coloneq\inf\{t>0:X_t\in A\}.
    \end{align*}
    
    To lighten the notation, for $m,l\in\mathbb N$,  let
    \begin{equation}\label{eq:def_muxml}
        \mu(x,m,l;A)(\cdot)=P^x[X[-m,l]\in\cdot\big|X[-m,-1]\cap A=\emptyset]
    \end{equation}
    be a probability measure on $W^{[0,\infty)}$, and we omit ``$A$'' when $A=\emptyset$ (i.e., no conditioning is imposed). For all $R>0$, let $W_{R}\subset W^{[0,\infty)}$ denote the subset of trajectories with diameter at most $R$. We have the following estimate on the diameter of a trajectory following the distribution $\mu(x,m,l)$ whose proof follows from a simple application of the Donsker's invariance principle.
    \begin{lemma}\label{lemma2.1}
        There exists a decreasing function $f:(0,\infty)\to(0,\infty)$ such that for all $\kappa>0$, $R>C(\kappa)$ large and $m+l<\kappa R^2$,
        $$\mu(x,m,l)(W_R)\geq f(\kappa).$$
    \end{lemma}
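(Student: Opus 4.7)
The plan is to reduce the statement to a Brownian motion small--ball estimate via Donsker's invariance principle. First, by the translation invariance of the simple random walk law, the diameter of $X[-m,l]$ under $P^x$ has the same distribution as the diameter of a simple random walk of length $M := m+l$ started at the origin (call such a walk $\tilde X$). It therefore suffices to bound $P[\mathrm{diam}(\tilde X[0,M]) \leq R]$ from below uniformly over all pairs $(m,l)$ with $M < \kappa R^2$.

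Next, I would split into two regimes. If $M \leq R$, then deterministically $\mathrm{diam}(\tilde X[0,M]) \leq M \leq R$ (each step moves by at most $1$ in $\ell^\infty$), so the probability equals $1$ and the bound is trivial. If instead $M > R$, then $R/\sqrt{M} > 1/\sqrt{\kappa}$, so
$$P[\mathrm{diam}(\tilde X[0,M]) \leq R] \;\geq\; P\!\left[\mathrm{diam}(\tilde X[0,M])/\sqrt{M} \leq 1/\sqrt{\kappa}\right].$$
Applying Donsker's theorem and the continuous mapping theorem to the (continuous) diameter functional on path space, $\mathrm{diam}(\tilde X[0,M])/\sqrt{M}$ converges in distribution, as $M\to\infty$, to $\mathrm{diam}(B[0,1])$, where $B$ is a standard $d$-dimensional Brownian motion. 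Because the law of $\mathrm{diam}(B[0,1])$ is continuous, for every $M$ larger than some threshold $M_0(\kappa)$ the right-hand side above is at least $\tfrac12\, P[\mathrm{diam}(B[0,1]) \leq 1/\sqrt{\kappa}]$. Setting $C(\kappa) := M_0(\kappa)$ guarantees that in this regime $M > R > C(\kappa)$, so the Donsker estimate applies.

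Finally, I would define $f(\kappa) := \tfrac12\, P[\mathrm{diam}(B[0,1]) \leq 1/\sqrt{\kappa}]$. This quantity is strictly positive by a standard small--ball estimate for Brownian motion (Brownian paths remain inside any nontrivial box with positive probability), and it is decreasing in $\kappa$ because the event $\{\mathrm{diam}(B[0,1]) \leq 1/\sqrt{\kappa}\}$ shrinks as $\kappa$ grows. There is no substantive obstacle in this argument; the only point requiring a little care is the uniformity of the Donsker approximation in $(m,l)$ subject to $m+l < \kappa R^2$, which is precisely why the case split is needed: the small--$M$ regime is handled trivially by the deterministic step bound, while the large--$M$ regime is forced into the asymptotic range of Donsker's theorem by the assumption $R > C(\kappa)$.
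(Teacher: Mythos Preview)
Your argument is correct and is exactly the approach the paper has in mind: the paper does not spell out a proof but simply states that the lemma ``follows from a simple application of the Donsker's invariance principle,'' which is precisely what you carry out (with the trivial case $M\le R$ handled separately to ensure uniformity). The only cosmetic point is that the limiting Brownian motion carries the simple random walk's variance scaling, but this does not affect the structure or validity of your bound.
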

    For a finite subset $A\subset\mathbb{Z}^d$, define its equilibrium measure by $e_A(x):=P_x[\widetilde{H}_A=\infty]\cdot\mathbf{1}_{x\in A}$, and define the capacity of $A$ by ${\rm cap}(A):=\sum_{x\in A}e_A(x)$. The capacity is monotone and sub-additive, namely
    $$\begin{aligned}{\rm cap}(A)&\leq{\rm cap}(D),\\{\rm cap}(A\cup B)&\leq{\rm cap}(A)+{\rm cap}(B)\end{aligned}$$
    for all finite $A,B,D\subset\mathbb Z^d,A\subset D$.
    By (2.16) in \cite{trove.nla.gov.au/work/15980712}, there are constants $0<C<C'<\infty$, such that for all $D\in\mathbb N$,
    \begin{equation}\label{eq:cap_ball}
        Cr^{d-2}\leq{\rm cap}(B(0,r))\leq C'r^{d-2}.
    \end{equation}
    For all $x,y\in\mathbb Z^d$, define the Green's function on $\mathbb Z^d$ by 
    $$g(x,y)=E^x\left[\sum_{n=0}^\infty\mathbf 1_{X_n=y}\right].$$ By Theorem 1.5.4 of \cite{trove.nla.gov.au/work/15980712}, there exist constants $0<C<C'<\infty$ such that
    \begin{equation}\label{eq:green's_estimate}
        C(1\vee|x-y|)^{2-d}\le g(x,y)\leq C'(1\vee|x-y|)^{2-d}
    \end{equation}
    for all $x,y\in\mathbb Z^d$.
    For all $A\subset\mathbb{Z}^d$ and $x\in\mathbb{Z}^d$, define the hitting probability by $h(x,A):=P^x[H_A<\infty]$. When $A$ is finite, by the last exit decomposition (see Proposition 2.4.1 of \cite{trove.nla.gov.au/work/15980712}), we have
    \begin{equation}\label{eq:hit_decompose}
        h(x,A)=\sum_{y\in A}g(x,y)e_A(y).
    \end{equation}
    Combining (\ref{eq:green's_estimate}) and (\ref{eq:hit_decompose}) gives the following: there exist constants $0<C<C'<\infty$ such that for all finite $A\subset\mathbb Z^d$ and $x\in\mathbb Z^d$,
    \begin{equation}\label{eq:hitting}
        {\rm cap}(A) \cdot C(1\vee d(x,A))^{2-d}\le h(x,A)\leq{\rm cap}(A) \cdot C'(1\vee d(x,A))^{2-d}.
    \end{equation}

    By \cite[Theorem 1.1]{asselah2018capacity}, \cite[Theorem B]{schapira2020capacity} and \cite[Theorem 1.2]{10.1214/18-AOP1288} (corresponding to $d\ge6$, $d=5$ and $d=4$ respectively), we have laws of large numbers and concentration results for the capacity of random walk trace in dimension 4 and above.

    \begin{proposition}\label{trace_cap_origin_new}
    As $T\in\mathbb N$ tends to infinity, we have
    \begin{equation}\label{eq:cap_ex}
        E^{\bm0}[{\rm cap}(X[0,T])]=        (1+o(1))\cdot\varepsilon_dT(1+\log T\cdot\mathbbm 1_{d=4})^{-1}.
    \end{equation}
    \begin{equation}\label{eq:cap_lln}
        {\rm cap}(X[0,T])\cdot T^{-1}(1+\log T\cdot\mathbbm1_{d=4})\stackrel{P}\longrightarrow\varepsilon_d.
    \end{equation}
    Moreover, there exists some $C=C(d)$ such that
    \begin{equation}\label{eq:cap_var}
        {\rm var}({\rm cap}(X[0,T]))<\begin{cases}CT^2\log^{-4}T,&d=4;\\CT\log T,&d\ge5.\end{cases}
    \end{equation}
    \begin{equation}\label{eq:cap_con}
        P^{\bm0}\left[\left|{\rm cap}(X[0,T])-E^{\bm0}[{\rm cap}(X[0,T])]\right|>\delta E^{\bm0}[{\rm cap(X[0,T])}]\right]<\begin{cases}C\delta^{-2} \log^{-2}T,&d=4;\\C\delta^{-2}T^{-1}\log T,&d\geq5.\end{cases}
    \end{equation}
\end{proposition}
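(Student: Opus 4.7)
The proposition is essentially a compilation of results already available in the literature, as the attribution preceding the statement makes explicit. My plan is to extract each ingredient from the appropriate reference and then combine the expectation asymptotic with the variance bound through Chebyshev's inequality; no new probabilistic input is needed.

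First I would record the expectation asymptotic (\ref{eq:cap_ex}) dimension by dimension: for $d \ge 5$ this is the classical Jain--Orey result cited in the introduction, which in fact serves as the defining property of $\varepsilon_d$; for $d = 4$, \cite[Corollary 1.4]{10.1214/18-AOP1288} gives $E^{\bm 0}[{\rm cap}(X[0,T])] = (1+o(1))\cdot (\pi^2/8)\cdot T/\log T$, matching the right-hand side with $\varepsilon_4 = \pi^2/8$. I would then read off the variance bound (\ref{eq:cap_var}) directly from \cite[Theorem 1.1]{asselah2018capacity} for $d \ge 6$, \cite[Theorem B]{schapira2020capacity} for $d = 5$, and \cite[Theorem 1.2]{10.1214/18-AOP1288} for $d = 4$.

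With (\ref{eq:cap_ex}) and (\ref{eq:cap_var}) in place, Chebyshev's inequality
$$P^{\bm 0}\!\left[\left|{\rm cap}(X[0,T]) - E^{\bm 0}[{\rm cap}(X[0,T])]\right| > \delta\, E^{\bm 0}[{\rm cap}(X[0,T])]\right] \le \frac{{\rm var}({\rm cap}(X[0,T]))}{\delta^{2}\,\big(E^{\bm 0}[{\rm cap}(X[0,T])]\big)^{2}}$$
directly yields (\ref{eq:cap_con}): substituting $\big(E^{\bm 0}[{\rm cap}(X[0,T])]\big)^{2} \asymp \varepsilon_d^{2}\, T^{2}\, (1+\log T\cdot\mathbbm{1}_{d=4})^{-2}$ into the denominator and the respective variance bound into the numerator yields $C\delta^{-2}\log^{-2}T$ for $d=4$ and $C\delta^{-2} T^{-1}\log T$ for $d \ge 5$ (after absorbing the harmless factor $\varepsilon_d^{-2}$ into $C$), valid for all $T$ sufficiently large. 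The law of large numbers (\ref{eq:cap_lln}) is then an immediate consequence: for every $\delta > 0$, the right-hand side of (\ref{eq:cap_con}) tends to $0$ as $T \to \infty$, and combined with (\ref{eq:cap_ex}) this shows that the normalized capacity converges in probability to $\varepsilon_d$.

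Since every substantive step is supplied by an external reference, there is no real obstacle to overcome; the only bookkeeping point worth highlighting is the identification of the constant $\pi^2/8$ appearing in \cite[Corollary 1.4]{10.1214/18-AOP1288} with $\varepsilon_4$ as defined in the introduction, which is built into that corollary.
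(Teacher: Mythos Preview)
Your proposal is correct and matches the paper's treatment exactly: the paper also presents this proposition as a direct compilation of the cited external results without further proof, and the Chebyshev step you spell out for (\ref{eq:cap_con}) and (\ref{eq:cap_lln}) is the implicit routine derivation.
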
 

To analyze hitting of a given set by finite paths, we consider the following quantity that formally resembles the capacity of a set.
For all finite $A\subset \mathbb Z^d$, define ${\rm cap}(A,T)$ by
\begin{equation}\label{eq:def_capT}
    {\rm cap}(A,T)=\sum_{x\in A}P^x[\widetilde{H}_A>T\log^{-10}T].
\end{equation}
(Here, the choice of $-10$ in the exponent is rather arbitrary. In fact, any real number smaller than $-2$ is sufficient for Proposition \ref{trace_cap_truncated_new} below.) We can prove estimates on this quantity analogous to Proposition \ref{trace_cap_origin_new} by directly comparing ${\rm cap}(\cdot)$ with ${\rm cap}(\cdot,T)$.
\begin{proposition}\label{trace_cap_truncated_new}
    For all $r>0$, the following estimates hold.
    \begin{equation}\label{eq:Tcap_ex}
        E^{\bm0}[{\rm cap}(X[0,T],rT)]=        (1+o(1))\cdot\varepsilon_dT(1+\log T\cdot\mathbbm 1_{d=4})^{-1}.
    \end{equation}
    Moreover, for all $\delta>0$, there exists some $C=C(d,\delta)>0$ such that
    \begin{equation}\label{eq:Tcap_con}
        P^{\bm0}\left[\left|{\rm cap}(X[0,T],rT)-E^{\bm0}[{\rm cap}(X[0,T])]\right|>\delta E^{\bm0}[{\rm cap(X[0,T])}]\right]<\begin{cases}C \log^{-1/2}T,&d=4;\\CT^{-1/4},&d\geq5\end{cases}
    \end{equation}
    holds for all large $n\in\mathbb N$.
\end{proposition}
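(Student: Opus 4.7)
The plan is to sandwich the truncated capacity between ${\rm cap}(\cdot)$ and ${\rm cap}(\cdot)$ plus a small remainder, and then transfer Proposition \ref{trace_cap_origin_new} to the truncated version. Abbreviate $T' := rT\log^{-10}(rT)$ and set
$$\Delta(A) := {\rm cap}(A, rT) - {\rm cap}(A) = \sum_{x \in A} P^x[T' < \widetilde H_A < \infty].$$
Since $\{\widetilde H_A = \infty\} \subset \{\widetilde H_A > T'\}$, one has $\Delta(A) \ge 0$ pointwise. Both parts of the proposition then reduce to a sufficiently quantitative upper bound on $E^{\bm 0}[\Delta(X[0,T])]$: the expectation bound (\ref{eq:Tcap_ex}) will follow by adding to (\ref{eq:cap_ex}), and the concentration bound (\ref{eq:Tcap_con}) will follow by Markov's inequality together with (\ref{eq:cap_con}) via the triangle inequality
$$\bigl|{\rm cap}(X[0,T], rT) - E^{\bm 0}[{\rm cap}(X[0,T])]\bigr| \le \Delta(X[0,T]) + \bigl|{\rm cap}(X[0,T]) - E^{\bm 0}[{\rm cap}(X[0,T])]\bigr|.$$

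To bound $E^{\bm 0}[\Delta(X[0,T])]$, I apply the strong Markov property at time $T'$ to each summand, yielding
$$P^x[T' < \widetilde H_A < \infty] \le E^x[h(X_{T'}, A)].$$
For a radius $R$ to be optimised, I split according to whether $X_{T'}$ lies in $B(A, R)$ or not: inside, bound $h \le 1$; outside, apply the decay estimate $h(y, A) \le C \cdot {\rm cap}(A) \cdot d(y,A)^{2-d}$ from (\ref{eq:hitting}) to obtain
$$E^x[h(X_{T'}, A)] \le P^x[X_{T'} \in B(A, R)] + C \cdot {\rm cap}(A) \cdot R^{2-d}.$$
Summing over $x \in X[0,T]$ and taking expectation, the second contribution is bounded by $C \cdot R^{2-d} \cdot E^{\bm 0}[|X[0,T]| \cdot {\rm cap}(X[0,T])]$, which is controlled by Proposition \ref{trace_cap_origin_new} and the trivial bound $|X[0,T]|\le T+1$. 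The first contribution, rewritten by interchanging sums as $\sum_{y \in B(A,R)} \sum_{x\in A} p_{T'}(x,y)$, is handled via the local central limit theorem and standard pointwise bounds on the local density $|X[0,T] \cap B(\cdot, R)|$ of the SRW range. Choosing $R$ at a scale between $\sqrt{T'}$ and $\sqrt T$, and carefully tracking the logarithmic factors in dimension four, gives
$$E^{\bm 0}[\Delta(X[0,T])] = o\bigl(\varepsilon_d T(1+\log T \cdot \mathbbm 1_{d=4})^{-1}\bigr)$$
with a quantitative rate sufficient to close the Markov-inequality step above.

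The main obstacle is expected to be in dimension four, where every relevant quantity carries logarithmic factors and the target concentration exponent $\log^{-1/2}T$ leaves almost no slack. The explicit $\log^{-10}T$ truncation inside ${\rm cap}(A, T)$ enters precisely at this stage: it ensures that $\sqrt{T'}$ is only a sub-polynomial fraction of $\sqrt T$, which creates enough room in the ``far from $A$'' regime to absorb polylog losses while still permitting $X_{T'}$ to decorrelate from its starting point in $A = X[0,T]$. Any power below $-2$ would suffice for this balance, which is consistent with the remark after (\ref{eq:def_capT}).
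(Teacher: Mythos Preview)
Your reduction to bounding $E^{\bm 0}[\Delta(X[0,T])]$ and then applying Markov plus (\ref{eq:cap_con}) is exactly the paper's strategy, and for $d\ge 5$ your scheme works (in fact the crude bound $h(z,A)\le\sum_{u=0}^T g(z,X_u)$ already gives $E^{\bm 0}[\Delta]\le C\sum_{s,u}\sum_{v\ge T'}E[p_v(X_s,X_u)]\le CT(T')^{2-d/2}\cdot(\text{polylog})$, which is $\le CT^{3/4}$ without any near/far split).

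The gap is in dimension four. Your first inequality
\[
P^x[T'<\widetilde H_A<\infty]\;\le\;E^x[h(X_{T'},A)]
\]
discards the indicator $\mathbbm 1\{\widetilde H_A>T'\}$, and in $d=4$ that indicator carries the entire gain. Indeed, for $A=X[0,T]$ and $x=X_s$, the point $\tilde X_{T'}$ is typically at distance $\sqrt{T'}=\sqrt T\,\log^{-5}T$ from $x$, while $\mathrm{cap}(A)\asymp T/\log T$; the capacity bound (\ref{eq:hitting}) then gives $h(\tilde X_{T'},A)\gtrsim (T/\log T)/T'=\log^{9}T$, i.e.\ the hitting probability is essentially $1$. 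Thus $\sum_{x\in A}E^x[h(\tilde X_{T'},A)]\gtrsim T$ and your scheme yields no improvement over the trivial bound $E^{\bm 0}[\Delta]\le T$, whereas the target is $T\log^{-3/2}T$. No choice of the radius $R$ in your near/far split can repair this: the problem is upstream of the split.

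What the paper does instead is keep the non-intersection event. Writing the walk from $X_t$ as a fresh walk $X^2$ and the original path (recentred) as $X^1$, one has
\[
E^{\bm 0}\bigl[P^{X_t}[t'<\widetilde H_{X[0,T]}<\infty]\bigr]
=P^{0,0}\bigl[X^{2}(0,t']\cap X^{1}=\emptyset,\ X^{2}[t',\infty)\cap X^{1}\neq\emptyset\bigr],
\]
and the first event already forces $P\le C\log^{-1}T$ by the four-dimensional non-intersection estimate (Lawler, \cite[Theorem 3.5.1]{trove.nla.gov.au/work/15980712}). Conditioning further on $X^2_{t'}$ being at distance $\gtrsim\sqrt{t'}\log^{-1/4}T$ from the short segment $X^1[-t',t']$ (an event of probability $1-O(\log^{-1/2}T)$) and using capacity bounds for the hit after time $t'$ then yields the extra $\log^{-1/2}T$, giving $E^{\bm 0}[\Delta]\le CT\log^{-3/2}T$. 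The $\log^{-1}T$ from the non-intersection event is precisely what your strong-Markov bound throws away.
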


\begin{proof}
    We only present the proof of the 4-dimensional case since the higher dimensional case is simpler and can be treated similarly. Assume that $d=4$, then for all $2T\log^{-10}T<t<T-2T\log^{-10}T$, we have
    \begin{equation}\label{eq:desc}
        \begin{aligned}
            &E^{\bm0}[P^{X_t}[\widetilde H_{X[0,T]}>T\log^{-10}T]-e_{X[0,T]}(X_t)]\\=\;& P^{0,0}\left[X^1[-t,T-t]\cap X^2(0,t']=\emptyset,\ X^1[-t,T-t]\cap[t',\infty)\neq\emptyset\right],
        \end{aligned}
    \end{equation}
    where $t'=\lfloor T\log^{-10}T\rfloor$. Let $E_1,E_2,E_3$ denote the events
    $$E_1\coloneq\{{\rm d}(X^2_{t'},X^1[-t',t'])>(t')^{1/2}\log^{-1/4}T\},$$
    $$E_2\coloneq\{X^1[-t',t']\cap X^2(0,t']=\emptyset\},$$
    $$E_3\coloneq\{{\rm cap}(X^1[-t',t'])\le 4\varepsilon_4t'\log^{-1}T\}.$$
    By \cite[Theorem 3.5.1]{trove.nla.gov.au/work/15980712},
    \begin{equation}\label{eq:E2}
        P^{0,0}[E_2]\le C\log^{-1}T.
    \end{equation}
    By (\ref{eq:cap_con}),
    \begin{equation}\label{eq:E3}
        P^{0.0}[E_3^c]\le C\log^{-2}T.
    \end{equation}
    Now we aim to bound the probability of $E_1$. To this end, for $X\sim P^{\bm0}$, let $\tau_0=0$, and define for all $k\ge1$
    $$\tau_k\coloneq\inf\{t>\tau_{k-1}:|X_t-X_{\tau_{k-1}}|\ge\lfloor (t')^{1/2}\log^{-1/4}T\rfloor\},$$
    $$\sigma_k\coloneq\tau_k-\tau_{k-1}.$$
    Apparently $\sigma_k, k\ge1$ are i.i.d.,\ and we have the estimate $E^{\bm0}[e^{-\sigma_1}]\le e^{-Ct'\log^{-1/2}T}$ by Gaussian estimates.
    Thus, we have 
    \begin{equation}\label{eq:vol}
        E^{\bm0}[|B(X[0,t'],(t')^{1/2}\log^{-1/4}T|]\le C(t')^2\log^{-1/2}T,
    \end{equation}
    which can be deduced from the large deviation estimate
    \begin{align*}
        &P^{\bm0}\left[|B(X[0,t'],(t')^{1/2}\log^{-1/4}T|\ge M\cdot(t')^2\log^{-1/2}T\right]\le P^{\bm0}[\tau_{\lceil M\log^{1/2}T\rceil}\le t']\\
        =\;&P^{\bm0}[e^{-t'}\le\exp(-\tau_{\lceil M\log^{1/2}T\rceil})]\le e^{t'}\left(E^{\bm0}[e^{-\sigma_1}]\right)^{M\log^{1/2}T}\le \exp(-(CM-1)t')
    \end{align*}
    for all $M\in\mathbb N^*$.
    By (\ref{eq:vol}) and the LCLT \cite[Theorem 1.2.1]{trove.nla.gov.au/work/15980712},
    \begin{equation}\label{eq:E1}
        P^{0,0}[E_1^c]\le C\cdot (t')^{-2}E^{\bm0}[|B(X[-t',t'],(t')^{1/2}\log^{-1/4}T)|]\le C\log^{-1/2}T.
    \end{equation}
    By (\ref{eq:E2}), (\ref{eq:E3}) and (\ref{eq:E1}), \begin{equation}\label{eq:dintersect}\begin{aligned}
        &P^{0,0}[X^1[-t,T-t]\cap X^2(0,t']=\emptyset,\ X^1[-t,T-t]\cap X^2[t',\infty)\neq\emptyset]\\
        \le\;&P^{0,0}[E_1^c]+P^{0.0}[E_3^c]+E^{0,0}\left[P^{0,0}[X^1[-t,T-t]\cap X^2[t',\infty)\neq\emptyset|X^1[-t',t'],X^2_{t'}]\mathbbm1_{E_1\cap E_2\cap E_3}\right]\\
        \le\;& C\log^{-4}T+C\log^{-1/2}T\cdot P^{0,0}\left[E_1\cap E_2\cap E_3 \right]\le C\log^{-3/2}T,
    \end{aligned}\end{equation}
    where the second inequality holds because on the event $E_1$,
    $$P^{0,0}[(X^1[-t,-t']\cup X^1[t',T-t])\cap X^2[t',\infty)\neq\emptyset|X^1[-t',t'],X^2_{t'}]\le \frac{C\log\log T}{\log T}$$
    according to \cite[Theorem 4.3.3]{trove.nla.gov.au/work/15980712}, and on $E_1\cap E_3$, 
    \begin{align*}
        &P^{0,0}[X^1[-t',t']\cap X^2[t',\infty)\neq\emptyset|X^1[-t',t'],X^2_{t'}]\\
        \le\;&{\rm cap}(X^1[-t',t'])\cdot C(1\vee {\rm d}(X_{t'}^2,X^1[-t',t']))^{-2}
        \le Ct'\log^{-1}T\cdot(t')^{-1}\log^{1/2}T\le C\log^{-1/2}T
    \end{align*}
    by (\ref{eq:hitting}).
By (\ref{eq:desc}) and (\ref{eq:dintersect}),
\begin{equation}\label{eq:dexpectation}\begin{aligned}
    &E^{\bm0}[{\rm cap}(X[0,T],rT)-{\rm cap}(X[0,T])]\\
    \le\;& 4T\log^{-10}T+\sum_{t=\lfloor2T\log^{-10}T\rfloor+1}^{T-\lfloor 2T\log^{-10}T\rfloor-1}E^{\bm0}[P^{X_t}[\widetilde H_{X[0,T]}>T\log^{-10}T]-e_{X[0,T]}(X_t)]\\
    \le\;&4T\log^{-10}T+CT\log^{-3/2}T\le CT\log^{-3/2}T,
\end{aligned}\end{equation}
which implies
\begin{equation}\label{eq:dexpectation'}
    P^{\bm0}[{\rm cap}(X[0,T],rT)-{\rm cap}(X[0,T])>\delta/2\cdot E^{\bm0}[{\rm cap}(X[0,T])]]\le C\delta^{-1}\log^{-1/2}T
\end{equation}
by (\ref{eq:cap_ex}) and the Markov inequality.
It is easy to verify that (\ref{eq:Tcap_ex}) follows from (\ref{eq:cap_ex}) and (\ref{eq:dexpectation}). Moreover, by (\ref{eq:dexpectation'}) and (\ref{eq:cap_con}),
\begin{align*}
    &P^{\bm0}\left[\left|{\rm cap}(X[0,T],rT)-E^{\bm0}[{\rm cap}(X[0,T])]\right|>\delta E^{\bm0}[{\rm cap(X[0,T])}]\right]\\
    \le\;& P^{\bm0}[{\rm cap}(X[0,T],rT)-{\rm cap}(X[0,T])>\delta/2\cdot E^{\bm0}[{\rm cap}(X[0,T])]]\\
    &+P^{\bm0}\left[\left|{\rm cap}(X[0,T])-E^{\bm0}[{\rm cap}(X[0,T])]\right|>\delta E^{\bm0}[{\rm cap(X[0,T])}]\right]\\
    \le\;&C\delta^{-1}\log^{-1/2}T+C\delta^{-2}\log^{-2}T
    \le C\delta^{-1}\log^{-1/2}T,
\end{align*}
proving (\ref{eq:Tcap_con}).

When $d\ge5$, one can prove 
\begin{equation}\label{eq:dexpectation''}
    E^{\bm0}[{\rm cap}(X[0,T],rT)-{\rm cap}(X[0,T])]\le CT^{3/4}
\end{equation}parallel to (\ref{eq:dexpectation}) with similar techniques, and deduce (\ref{eq:Tcap_ex}) and (\ref{eq:Tcap_con}) from this estimate as above.
\end{proof}
    \hspace{\fill}
        
    \textbf{Finitary random interlacements (FRI)}: We now present the definition of the finitary random interlacements (FRI) point process. For any $u>0$ and any $\rho\in\mathcal{M}$, we define a measure by    $$\pi=\sum_{x\in\mathbb{Z}^d}\sum_{l\in\mathbb{N}}\frac{\rho(l)}{\mu_1(\rho)+1}\cdot\mu(x,0,l).$$
    \begin{definition}
        The finitary random interlacements point process with intensity $u$ and length distribution $\rho$, denoted by $\mathcal X^{u,\rho}$, is a Poisson point process (PPP) on the space $W^{[0,\infty)}$ with density measure $u\cdot\pi$, and the corresponding random subgraph of $\mathbb Z^d$ is defined by
        $\mathcal I^{u,\rho}=G(\mathcal X^{u,\rho})$.
    \end{definition}
    This definition generalizes the one commonly used in the literature, and takes into consideration clouds of trajectories with arbitrary length distributions instead of just geometric ones. The factor of $\frac{1}{\mu_1(\rho)+1}$ in the definition of $\pi$ guarantees that the expected total local time at a vertex is exactly $u$ in $\mathcal X^{u,\rho}$. We refer to \cite{cai2021some} for a more comprehensive introduction of the model.
    
    We are concerned about the distribution of trajectories within $\mathcal X^{u,\rho}$ originating from or traversing a certain subset of $\mathbb{Z}^d$. To this end, we further introduce some notations. Let $\mathcal{X}$ be any point measure on $W^{[0,\infty)}$. For all $A\subset\mathbb{Z}^d$, let $\mathcal X[A]$ denote the point process comprising trajectories within $\mathcal X$ intersecting $A$. For $A,B,D\subset\mathbb{Z}^d,D\subset A$, let 
    \begin{equation}\label{eq:defXAB}
        \mathcal X[A;B]=\mathcal X[A]-\mathcal X[B]
    \end{equation}
    and let 
    \begin{equation}\label{eq:defXADB}
        \mathcal X[A,D;B]=\mathcal X[A;B]\cap\mathcal{H}[A;D],
    \end{equation}where $\mathcal H[A;D]$ was defined in (\ref{eq:def_cH}).

    For all $A\subset\mathbb{Z}^d$ finite, we can find the distribution of $\mathcal X^{u,\rho}[A]$ by a technique called ``rerooting'' of the trajectories (c.f. \cite[Lemma 3.1]{duminil2023finite}).
    \begin{lemma}\label{local}
        Let $A\subset\mathbb{Z}^d$ be finite, then 
        $\mathcal X^{u,\rho}[A]$ is a PPP with intensity $u\cdot\pi_A$,
        where $$\pi_A\coloneq\sum_{x\in A,m,l\in\mathbb{N}}\frac{\rho(m+l)}{\mu_1(\rho)+1}P^x[\widetilde H_A>m]\cdot\mu(x,m,l;A).$$
    \end{lemma}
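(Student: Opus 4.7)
The strategy is the standard ``re-rooting'' argument, which reparameterizes each trajectory of $\mathcal X^{u,\rho}$ that hits $A$ by its first hitting time and hitting site in $A$, combined with the time-reversibility of the simple random walk. To begin, by the restriction property of Poisson point processes, $\mathcal X^{u,\rho}[A]$ is itself a PPP on $W^{[0,\infty)}$ with intensity
\[u \cdot \pi|_{\{\eta \,:\, \mathrm{range}(\eta)\cap A \neq \emptyset\}} \;=\; u\sum_{x \in \mathbb Z^d}\sum_{L \in \mathbb N} \frac{\rho(L)}{\mu_1(\rho)+1}\, \mu(x,0,L)\bigl(\,\cdot\,\cap\{\tau_A<\infty\}\bigr),\]
so it suffices to identify this restricted measure with $\pi_A$.

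Next, I would decompose each trajectory $\eta$ hitting $A$ as $\eta = \eta_- * \eta_+$, where $\eta_-$ is the pre-hitting segment of length $m := \tau_A^\eta$ ending at $y := \eta(m) \in A$ with $\eta(0),\dots,\eta(m-1) \notin A$, and $\eta_+$ is the post-hitting remainder of length $l := T(\eta) - m$, an unconditional simple random walk starting at $y$. Writing $L = m + l$, the above restricted intensity becomes
\[\sum_{y\in A}\sum_{m,l\in\mathbb N}\frac{\rho(m+l)}{\mu_1(\rho)+1}\sum_{x\in\mathbb Z^d} P^x\bigl[\tau_A=m,\;\eta(m)=y,\;\eta\in\,\cdot\,\bigr].\]

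The main step is then to simplify the innermost sum over $x\in\mathbb Z^d$. Since $P^x$ assigns probability $(2d)^{-m}$ to each specific $m$-step path, the map ``$\eta[0,m] \mapsto$ time-reversed $\eta[0,m]$'' is a measure-preserving bijection between pre-hitting segments of length $m$ ending at $y$ (with $\eta(0),\dots,\eta(m-1)\notin A$) and length-$m$ walks started at $y$ whose vertices at times $1,\dots,m$ avoid $A$. Summing over the starting site $x$, equivalently over the terminal vertex of the reversed walk, therefore yields total mass $P^y[\widetilde H_A > m]$; and the reversed pre-hitting segment, viewed as the ``past'' $X[-m,-1]$ of a two-sided walk at $y$, together with the independent post-hitting walk, viewed as the ``future'' $X[0,l]$, has joint conditional law exactly $\mu(y,m,l;A)$ from \eqref{eq:def_muxml}. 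Substituting and relabeling $y$ as $x$ inside the sum identifies the restricted intensity with $\pi_A$, completing the argument.

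The only real content of the argument is the reversibility step, where one must verify that after summing over the starting site $x$ the avoidance constraint ``$\eta(0),\dots,\eta(m-1)\notin A$'' of the original walk transfers to the condition $\widetilde H_A > m$ (i.e., $X[1,m]\cap A=\emptyset$) of the reversed walk started at $y$. The degenerate case $m=0$ is handled consistently since $P^y[\widetilde H_A>0]=1$ and trajectories with $\tau_A=0$ satisfy $\eta(0)=y\in A$. Everything else is routine bookkeeping using the definition of $\pi$ together with the Poisson restriction property.
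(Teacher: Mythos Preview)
Your proposal is correct and is precisely the ``rerooting'' argument the paper invokes; the paper does not actually supply a proof of this lemma, merely citing \cite[Lemma~3.1]{duminil2023finite} for the technique, and your write-up fills in those details accurately. In particular, your identification of the reversed pre-hitting segment with the conditioned past $X[-m,-1]\cap A=\emptyset$ in the definition of $\mu(y,m,l;A)$ is exactly the point of the construction.
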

    Given this result, it is easy to compute the expected number of trajectories within $\mathcal X^{u,\rho}$ hitting $A$ at $x$, which equals $ue_A^{(\rho)}(x)$, where
    \begin{equation}
        e_A^{(\rho)}(x)=\sum_{m=0}^\infty\frac{\mu_0^{(m)}(\rho)}{\mu_1(\rho)+1}P^x[\widetilde H_A>m].
    \end{equation}
    Thus, it is natural to define a substitute for the usual capacity of subsets of $\mathbb{Z}^d$
    \begin{equation}\label{eq:rho_cap}
        {\rm cap}^{(\rho)}(A):=\displaystyle\sum_{x\in A}e_A^{(\rho)}(x),
    \end{equation}
    which we call the $\rho$-capacity.
    Note that $e_A^{(\rho)}(x)\geq e_A(x)$ and thus ${\rm cap}^{(\rho)}(A)\geq{\rm cap}(A)$. 
    
    We will need control over the $\rho$-capacity. It is easy to see that for arbitrary $\rho$ and $d\ge4$,
    \begin{equation}\label{eq:rho_bound5}{\rm cap}^{(\rho)}(A)\leq|A|.
    \end{equation} 
    Moreover, letting $N=\lfloor \mu_1(\rho)\log^{-\lambda}\mu_1(\rho)\rfloor$, then we have
    $$e^{(\rho)}_A(x)\leq\sum_{m=0}^N\frac{m_0^{(m)}(\rho)}{\mu_1(\rho)+1}\cdot 1+\sum_{m=N+1}^\infty\frac{m_0^{(m)}(\rho)}{\mu_1(\rho)+1}P^x[\widetilde H_A>N]\leq\frac{N}{\mu_1(\rho)+1}+P^x[\widetilde H_A>N].$$
    Summing over $x\in A$, we get
    \begin{equation}\label{eq:rho_bound4}
        {\rm cap}^{(\rho)}(A)\leq|A|\log^{-10}\mu_1(\rho)+{\rm cap}(A,\mu_1(\rho)),
    \end{equation}
    where ${\rm cap}(A,\mu_1(\rho))$ is defined by (\ref{eq:def_capT}).
    
    \hspace{\fill}

    \textbf{Branching Processes with agents in countable sets}: Let $\mathscr X$ be an arbitrary countable set. For $\mathcal S$ a multi-subset of $\mathscr X$ and all $x\in\mathcal X$, recall that $n_x(\mathcal S)$ is the multiplicity of $x $ in $\mathcal S$. For all $x\in\mathscr X$, let $\nu_x$ be a distribution on the space of finite point measures on $\mathscr X$. Let $\mathcal S$ be a multi-subset of $\mathscr X$. The branching process with agents in $\mathscr X$ with offspring distribution $(\nu_x)_{x\in\mathscr X}$ starting from $\mathcal S$, denoted by $(Y_n)_{n\geq0}$, is then recursively defined as follows. For all $x\in\mathscr X$ and $i,j\in\mathbb N$, let $Y_{x,i,j}\sim\nu_x$ be mutually independent. First, let $Y_0=\sum_{x\in\mathscr X}n_x(\mathcal S)\cdot \delta_x$; then, assuming that $Y_i$ has been sampled, let $$Y_{i+1}=\sum_{x\in\mathscr X}\sum_{0\le j<n_x(Y_i)}Y_{x,i,j}.$$ 
    In words, every agent at $x$ in the $i$-th generation produces offsprings according to the distribution $\nu_x$. For random variables $\xi_x\sim\nu_x,x\in\mathscr X$, we also say that $(Y_n)_{n\geq0}$ has offspring distribution $(\xi_x)_{x\in\mathscr X}$.

\section{Proof of the lower bound of (\ref{new1.4})}
    In this section, we prove the lower bound of (\ref{new1.4}) which will be deduced from the subcriticality of $\mathcal {I}^{u_n^-,\rho_n}$ (recalling the definition of $u_n^-$ from (\ref{eq:upm})). Throughout the section, let $(\rho_n)_{n\in\mathbb{N}}$ be a fixed appropriate family of distributions (recalling Definition \ref{def:appropriate_family}), and fix $\epsilon>0$.

    The idea is to explore the cluster in $\mathcal I^{u_n^-,\rho_n}$ containing $\bm 0$ layer by layer, and prove that the cluster is almost surely finite (i.e., $\mathcal I^{u_n^-,\rho_n}$ is subcritical) by showing that the exploration stops within finitely many steps. To see this, we compare the exploration process with a Galton-Watson tree that dies out almost surely. In \cite{cai2023exact}, a similar process was employed to prove the upper bound of (\ref{eq:previous_bound}). We now carry out more precise analysis in order to prove the subcriticality of $\mathcal I^{u_n^-,\rho_n}$.
    
    We start with the construction of the exploration process $(\mathcal L_k,L_k)_{k\ge0}$ for $\mathcal I^{u,\rho}$ with arbitrary parameters $u$ and $\rho$. Let $L_0=\{\bm0\}$ and $\mathcal{L}_0$ be the zero measure. Assume that we have defined $L_k$ ($k\in\mathbb{N}$), then let $$\mathcal{L}_{k+1}=\mathcal X^{u,\rho}\left[L_k;\bigcup_{i=0}^{k-1}L_i\right]$$ and $L_{k+1}=V(\mathcal{L}_{k+1})$.

    Let $\mathcal{L}=\cup_{i=1}^\infty\mathcal{L}_i$. Note that $\mathcal{L}$ is exactly the collection of trajectories comprising the cluster in $\mathcal I^{u,\rho}$ that includes $0$. Due to the almost sure finiteness of $\mathcal{L}_i$, $i>0$ and $\mathcal X^{u,\rho}[B(0,N)]$, $N>0$, the cluster of $\mathcal I^{u,\rho}$ containing the origin is infinite iff $\mathcal{L}_i$ is nonempty for all $i>0$.
    
    To simplify our computation, we introduce a modified sequence $(\mathcal{L}_i')_{i\geq 1}$ that stochastically dominates $(\mathcal{L}_i)_{i\geq1}$. Let $\mathcal{J}_i$, $i>0$ be independent PPPs on $W^{[0,\infty)}$ with the same distribution as $\mathcal X^{u,\rho}$. We now generate the $i$-th layer of trajectories with $\mathcal{J}_i$, so that the source of randomness for each layer is independent. Precisely, let $L_0'=\{{\bf0}\}$. Assume that $L_k'$ is defined, then let $\mathcal{L}_{k+1}'=\mathcal{J}_{k+1}[L_k']$.
    \begin{lemma}\label{le:domination_lower}
        For all $u>0$ and $\rho\in\mathcal{M}$, there exists a coupling $\mathbb{Q}$ of $(\mathcal{L}_i)_{i>0}$ and $(\mathcal{L}_i')_{i>0}$ such that for all $i$,
        $$\mathbb{Q}[\mathcal{L}_i\subset\mathcal{L}_i']=1.$$
    \end{lemma}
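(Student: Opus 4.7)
The plan is to construct the coupling inductively on the layer index $k$, maintaining the relation $L_k\subset L_k'$ (together with $\mathcal L_k\subset \mathcal L_k'$) at every stage, and to carry out the one-step coupling via a Poisson thinning/superposition argument. The base case $k=0$ is immediate since both processes start from $\{\bm 0\}$.

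For the inductive step, suppose the coupling has been built up to time $k-1$ and $L_{k-1}\subset L_{k-1}'$ almost surely. Condition on the sigma-algebra $\mathcal F_{k-1}$ generated by $\mathcal X^{u,\rho}[\bigcup_{i<k-1}L_i]$ together with everything used so far. By the restriction property of Poisson point processes applied to $\mathcal X^{u,\rho}$, the remaining part of the cloud, namely $\mathcal X^{u,\rho}-\mathcal X^{u,\rho}[\bigcup_{i<k-1}L_i]$, is, conditionally on $\mathcal F_{k-1}$, a PPP on $W^{[0,\infty)}$ with intensity $u\cdot \pi$ restricted to trajectories avoiding $\bigcup_{i<k-1}L_i$. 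Intersecting with $\mathcal H[L_{k-1};L_{k-1}]$ and using that $\{L_i\}_{i\le k-1}$ are $\mathcal F_{k-1}$-measurable, we conclude that $\mathcal L_k$ is, conditionally on $\mathcal F_{k-1}$, a PPP whose intensity measure $\nu_k$ is bounded above by the restriction of $u\cdot \pi$ to the event that the trajectory hits $L_{k-1}$.

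Independently, $\mathcal J_k$ is a fresh copy of $\mathcal X^{u,\rho}$, so $\mathcal J_k[L_{k-1}']$ is a PPP with intensity $\nu_k'$ equal to the restriction of $u\cdot \pi$ to the event that the trajectory hits $L_{k-1}'$. Since $L_{k-1}\subset L_{k-1}'$, any trajectory hitting $L_{k-1}$ also hits $L_{k-1}'$, so $\nu_k\le \nu_k'$ as measures on $W^{[0,\infty)}$. By the standard superposition/thinning coupling of PPPs (write $\mathcal J_k[L_{k-1}']$ as the superposition of two independent PPPs with intensities $\nu_k$ and $\nu_k'-\nu_k$), we can, on a possibly enlarged probability space, couple $\mathcal L_k$ with $\mathcal J_k[L_{k-1}']$ so that $\mathcal L_k\subset \mathcal J_k[L_{k-1}']=\mathcal L_k'$ almost surely. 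Taking ranges gives $L_k\subset L_k'$, closing the induction.

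The only mild technical point is justifying the conditional Poisson description of $\mathcal L_k$; but this is routine, since $\bigcup_{i<k-1}L_i$ is $\mathcal F_{k-1}$-measurable and the restriction of a PPP to a measurable region is, conditionally on its complement, still Poisson with the restricted intensity. Concatenating the one-step couplings (each constructed using an independent uniform randomisation) yields the desired joint law of the whole sequence $(\mathcal L_i,\mathcal L_i')_{i>0}$ satisfying $\mathcal L_i\subset \mathcal L_i'$ for every $i$, as claimed.
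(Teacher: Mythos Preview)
Your proof is correct and follows essentially the same idea as the paper: both rely on the fact that, conditionally on $L_0,\dots,L_{k-1}$, the next layer $\mathcal L_k$ is a Poisson point process whose intensity is dominated by that of $\mathcal J_k[L_{k-1}']$, and then couple so that $\mathcal L_k\subset\mathcal L_k'$.

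The one difference is in execution. You invoke an abstract thinning/superposition coupling of two Poisson processes with ordered intensities. The paper instead realises the coupling explicitly by generating $\mathcal L_{k+1}$ directly from the \emph{same} fresh PPP $\mathcal J_{k+1}$ used to define $\mathcal L_{k+1}'$, namely setting $\mathcal L_{k+1}=\mathcal J_{k+1}[L_k;\bigcup_{i=0}^{k-1}L_i]$; the inclusion $\mathcal L_{k+1}\subset\mathcal J_{k+1}[L_k]\subset\mathcal J_{k+1}[L_k']=\mathcal L_{k+1}'$ is then a pointwise set-theoretic fact, with no external coupling lemma needed. This is slightly cleaner and avoids the need to enlarge the probability space, but your argument is equally valid.
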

    \begin{proof}
        Note the following fact parallel to Lemma 4.3 of \cite{cai2023exact}.
        \begin{equation}
        \begin{aligned}
            &\mbox{For any }k\in\mathbb N\mbox{, given }L_0,...,L_k\mbox{, then the conditional distribution}\\ &\mbox{of }\mathcal L_{k+1}\mbox{ is the same as that of }\mathcal X^{u,\rho}\left[L_k;\bigcup_{i=0}^{k-1}L_i\right].
        \end{aligned}
        \end{equation}
        Thus, we can construct $(\mathcal L_k)_{k>0}$ by
        $$\mathcal L_{k+1}=\mathcal J_{k+1}[L_k;\bigcup_{i=0}^{k-1}L_i].$$
        Assuming $\mathcal L_i\subset\mathcal L_i'$ for all $0\le i\le k$, we have
        $$\mathcal L_{k+1}\subset\mathcal J_{k+1}[L_k]\subset\mathcal J_{k+1}[L_k']=\mathcal L_{k+1}'.$$
        The conclusion then follows by induction.
    \end{proof}

    By Lemma \ref{le:domination_lower}, it suffices to prove that $(\mathcal L_k')_{k=0}^\infty$ dies out almost surely. To this end, we introduce a quantity
    $$\kappa^{(\rho)}(A):=\sum_{x\in A}\sum_{m=0}^\infty\frac{\mu_1^{(m)}(\rho)}{\mu_2(\rho)}P^x[\widetilde H_A>m],$$
    where $A\subset\mathbb Z^d$ is finite, and show that the expectation $W_k\coloneq\mathbb E[\kappa^{(\rho)}(L_k')]$ decreases to zero exponentially fast, which matches our intuition of a sub-critical Galton-Watson tree. Technically, note that $k^{(\rho)}$ is sub-additive, i.e.
    $\kappa^{(\rho)}(A_1\cup A_2)\le\sum_{i=1,2}\kappa^{(\rho)}(A_i)$ (since $P^x[\widetilde H_{A_1\cup A_2}>m]\le\sum_{i=1,2}P^x[\widetilde H_{A_i}>m]$). Therefore, given $L_k'$ and the hitting points at $L_k'$ of trajectories in $\mathcal{L}_{k+1}'$, the expectation of $k^{(\rho)}(L_{k+1}')$ from above by those of $\kappa^{(\rho)}(\eta)$, $\eta\in\mathcal{L}_{k+1}'$ whose conditional distribution can be obtained from Lemma \ref{local}. Moreover, when we take $(u,\rho)=(u_n^-,\rho_n)$, the conditional expectation of the sum $\sum_{\eta\in\mathcal L_{k+1}'}\kappa^{(\rho_n)}(\eta)$ is approximately $(1-\epsilon)\kappa^{(\rho_n)}(L_{k}')$, implying that $\mathbb E[\kappa^{(\rho_n)}(L_k')]$ decays exponentially in $k$.
    
    Formally, let 
    $$V_k:=\begin{cases}\mathbb{E}[|L'_k|],&d\geq5;\\\mathbb{E}[|L_k'|]\cdot\log^{-1}\mu_1(\rho),&d=4.\end{cases}$$
    \begin{lemma}\label{lemma3.2}
        For every $\delta>0$, the following holds for large $n\in\mathbb{N}$. For $(u,\rho)=(u_n^-,\rho_n)$, we have
        \begin{align}\label{eq:ineq_C}
                W_{k+1}\leq&(1-\epsilon/2)W_k+\delta V_k,\\
                \label{eq:ineq_V}V_{k+1}\leq& C\cdot W_k,
        \end{align}
        for all  $k\in\mathbb{N}$. Here $C<\infty$ is a constant only dependent on $d$.
    \end{lemma}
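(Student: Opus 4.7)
The plan is to condition on $L_k'$ and invoke Lemma \ref{local}, which identifies $\mathcal L_{k+1}' = \mathcal J_{k+1}[L_k']$ as a Poisson point process with intensity measure $u\pi_{L_k'}$. Combined with the subadditivity of $\kappa^{(\rho)}$ (resp.\ $|\cdot|$), both inequalities reduce to computing Campbell-type integrals
$$u\int F(\eta)\,d\pi_{L_k'}(\eta) = \frac{u}{\mu_1(\rho_n)+1}\sum_{x\in L_k'}\sum_{m,l}\rho_n(m+l)P^x[\widetilde H_{L_k'}>m]\,\mathbb E_{\mu(x,m,l;L_k')}[F],$$
where $F$ is either $\kappa^{(\rho_n)}(\mathrm{range}(\cdot))$ (for (\ref{eq:ineq_C})) or $|\mathrm{range}(\cdot)|$ (for (\ref{eq:ineq_V})). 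The key algebraic tool is a Fubini-type rearrangement $\sum_{m+l=T}\rho_n(T)\cdot T = \sum_m \mu_1^{(m)}(\rho_n)$, which automatically produces the weight defining $\kappa^{(\rho_n)}$ on the right-hand side.

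For (\ref{eq:ineq_C}), I would first establish the pointwise bound
$$\kappa^{(\rho_n)}(A)\le |A|/\log^{10}\mu_1(\rho_n) + (1+o(1))\mathrm{cap}(A,\mu_1(\rho_n))$$
by splitting the sum in the definition of $\kappa^{(\rho_n)}$ at $m\approx \mu_1\log^{-10}\mu_1$ and using $\mu_2\ge\mu_1^2$. For typical trajectory lengths $T=m+l\le C\mu_1(\rho_n)$, Proposition \ref{trace_cap_truncated_new} together with subadditivity of $\mathrm{cap}(\cdot,T)$ gives $\mathbb E_\mu[\kappa^{(\rho_n)}(\mathrm{range}(X[-m,l]))]\le (1+o(1))\varepsilon_d T/(1+\log\mu_1(\rho_n)\mathbbm 1_{d=4})$. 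Substituting this bound and applying the Fubini rearrangement converts the typical contribution into
$$\frac{u\,\varepsilon_d}{1+\log\mu_1(\rho_n)\mathbbm 1_{d=4}}\cdot\frac{\mu_2(\rho_n)}{\mu_1(\rho_n)+1}\,\kappa^{(\rho_n)}(L_k')(1+o(1)),$$
which by the definition of $u_n^-$ is exactly $(1-\epsilon)(1+o(1))\kappa^{(\rho_n)}(L_k')\le (1-\epsilon/2)\kappa^{(\rho_n)}(L_k')$ for $n$ large. The contribution from long trajectories ($T>C\mu_1(\rho_n)$) is bounded using the same capacity estimate plus the $\vartheta$-condition of Definition \ref{def:appropriate_family}: choosing $C$ so that $\vartheta(\rho_n,C)$ is small uniformly in $n$, this contribution is $\le\delta V_k$.

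For (\ref{eq:ineq_V}) I use the crude bound $|L_{k+1}'|\le\sum_\eta(T(\eta)+1)$. Campbell plus the same rearrangement yields
$$\mathbb E[|L_{k+1}'|\,|\,L_k']\le \frac{u\mu_2(\rho_n)}{\mu_1(\rho_n)+1}\kappa^{(\rho_n)}(L_k')+u\cdot\mathrm{cap}^{(\rho_n)}(L_k').$$
The first term is of order $(1+\log\mu_1(\rho_n)\mathbbm 1_{d=4})\kappa^{(\rho_n)}(L_k')$ by the choice of $u_n^-$, while the second, via (\ref{eq:rho_bound4}) combined with a comparison $\mathrm{cap}(\cdot,\mu_1)\lesssim\log^{10}\mu_1\cdot\kappa^{(\rho_n)}(\cdot)\cdot\mu_2/\mu_1^2$, is a lower-order correction (using $\mu_2\ge\mu_1^2$). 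Dividing by $(1+\log\mu_1(\rho_n)\mathbbm 1_{d=4})$ and taking expectations yields $V_{k+1}\le C W_k$.

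The main obstacle is the long-trajectory contribution in (\ref{eq:ineq_C}) when $d=4$: the subadditivity of $\mathrm{cap}(\cdot,\mu_1)$ only yields $\mathbb E[\mathrm{cap}(X[0,T],\mu_1)]\lesssim\varepsilon_4 T/\log\mu_1$ (rather than the stronger $T/\log T$ that would absorb arbitrarily large $T$), which threatens to leave an unwanted $\log\mu_1(\rho_n)$ factor in the bound. Absorbing this requires carefully exploiting the slack in $\vartheta$ and the precise $(1-\epsilon)$-vs-$(1-\epsilon/2)$ gap (or, alternatively, invoking the sharper appropriateness condition (\ref{eq:appropriate'}) mentioned in the remark after Definition \ref{def:appropriate_family}).
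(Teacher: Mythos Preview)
Your approach to (\ref{eq:ineq_V}) is correct and essentially matches the paper's (you are slightly more careful with the ``$+1$'' in the trajectory length, producing the extra $u\cdot{\rm cap}^{(\rho_n)}(L_k')$ term, which is indeed lower order).

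There is, however, a genuine gap in your argument for (\ref{eq:ineq_C}). The Fubini rearrangement $\sum_{m,l}\rho_n(m+l)(m+l)\,P^x[\widetilde H_{L_k'}>m]=\sum_m\mu_1^{(m)}(\rho_n)\,P^x[\widetilde H_{L_k'}>m]$ only produces the weight $\kappa^{(\rho_n)}(L_k')$ if the bound you substitute for $\kappa^{(\rho_n)}({\rm range}(\eta))$ is a \emph{deterministic} function of $T=m+l$, so that the factor $P^x[\widetilde H_{L_k'}>m]$ survives intact in the sum. Proposition~\ref{trace_cap_truncated_new} gives only an \emph{expectation} bound for the unconditioned law $\mu(x,m,l)$; to pass to the conditional law $\mu(x,m,l;L_k')$ you must use
\[
P^x[\widetilde H_{L_k'}>m]\cdot \mathbb E_{\mu(x,m,l;L_k')}\bigl[{\rm cap}(\cdot,\mu_1)\bigr]\le \mathbb E_{\mu(x,m,l)}\bigl[{\rm cap}(\cdot,\mu_1)\bigr]\le (1+o(1))\varepsilon_d T/(\cdots),
\]
and this absorbs the hitting probability. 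After summing, the rearrangement then yields $|L_k'|$ rather than $\kappa^{(\rho_n)}(L_k')$, which in $d=4$ is off by a factor $\log\mu_1(\rho_n)$ and cannot be repaired.

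The paper's remedy is exactly to convert the expectation bound into a pointwise one by truncation: write $\kappa^{(\rho_n)}(\eta)=\kappa^{(\rho_n)}(\eta)\mathbf 1_{\{\kappa^{(\rho_n)}(\eta)\le M_T\}}+\kappa^{(\rho_n)}(\eta)\mathbf 1_{\{\kappa^{(\rho_n)}(\eta)>M_T\}}$ with $M_T=(1+\epsilon/2)\varepsilon_d T/(1+\log\mu_1\mathbbm 1_{d=4})$. On the first event one has the deterministic bound $\kappa^{(\rho_n)}(\eta)\le M_T$, linear in $T$, which preserves $P^x[\widetilde H_{L_k'}>m]$; the rearrangement then correctly gives $(1-\epsilon/2)\kappa^{(\rho_n)}(L_k')$. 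On the complement the hitting probability is dropped, but the resulting error ${\rm err}^{(\rho_n)}(T)=\mathbb E_\mu[\kappa^{(\rho_n)}(\eta)\mathbf 1_{\{\kappa^{(\rho_n)}(\eta)>M_T\}}]$ is shown (via the concentration in Propositions~\ref{trace_cap_origin_new}--\ref{trace_cap_truncated_new} for $T\le C\mu_1$, and via the $\vartheta$-condition for $T>C\mu_1$) to contribute at most $\delta V_k$. Your ``main obstacle'' about long trajectories is thus a secondary issue; the primary one is getting the typical contribution to land on $\kappa^{(\rho_n)}(L_k')$ at all.
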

    We defer the proof of Lemma \ref{lemma3.2} for a few lines, and first finish the proof of the lower bound of (\ref{new1.4}) using it.
    \begin{proof}[Proof of the lower bound of (\ref{new1.4}) assuming Lemma \ref{lemma3.2}.]
        In (\ref{eq:ineq_V}), set $\delta=\epsilon/(4C)$, then for large $n\in\mathbb{N}$ and $(u,\rho)=(u_n^-,\rho_n)$, we have
        $$W_{k+1}\leq(1-\epsilon/2)W_k+\epsilon/4\cdot W_{k-1}$$ for $k>0$. Take upper limits on both sides and we get $\lim_{k\to\infty}W_k$=0. (It can be deduced from (\ref{eq:ineq_C}) that the sequence $(W_k)_{k\in\mathbb{N}}$ is bounded.) Thus, $\lim_{k\to\infty}V_k=0$, indicating the existence of some (random) $k\in\mathbb N$ such that $L_j'=\emptyset$ for all $j>k$. Denote by $\mathcal C $ the cluster in $\mathcal I^{u_n^-,\rho_n}$ containing $0$, then by Lemma \ref{le:domination_lower}, 
        $$\mathcal C\subset\bigcup_{j=0}^k L_j\subset\bigcup_{j=0}^k L_j'.$$
        Thus, since $L_j$ is almost surely finite for all $j\in\mathbb N$, $\mathcal C$ is almost surely finite, which yields $u_*(\rho_n)\ge u_n^-$.
    \end{proof}
    Now, we present the proof of Lemma \ref{lemma3.2} which involves estimating the conditional expectation of $|L_{k+1}'|$ and $\kappa^{(\rho)}(L_{k+1}')$ given $\mathcal L'_k$.
    \begin{proof}[Proof of Lemma \ref{lemma3.2}]
        The proofs for cases $d\geq5$ and $d=4$ are identical except for the details related to the normalization of expected volume. Here we only present the proof of the latter. By Lemma \ref{local},
        \begin{equation} 
            \begin{aligned}
                &\mathbb{E}[|L'_{k+1}|\big|L'_k]\leq\displaystyle\sum_{x\in L'_k}\displaystyle\sum_{m,l\in\mathbb{N}}\frac{u\rho(m+l)}{\mu_1(\rho)+1}\cdot P_x[\widetilde{H}_{L'_k}>m]\cdot(m+l)\\
                =\;&\frac{u}{\mu_1(\rho)+1}\displaystyle\sum_{x\in L'_k}\displaystyle\sum_{m\in\mathbb{N}}\mu_1^{(m)}(\rho)\cdot P_x[\widetilde{H}_{L'_k}>m]
                \leq u\cdot\frac{\mu_2(\rho)}{\mu_1(\rho)}\cdot\kappa^{(\rho)}(L'_k).
            \end{aligned}
        \end{equation}
        Plugging in $(u,\rho)=(u_n^-,\rho_n)$ and taking the expectation, we get $V_{k+1}\leq \frac{8}{\pi^2}\cdot C_k$, which implies (\ref{eq:ineq_V}). Now we compute the conditional expectation of $\kappa^{(\rho)}(L_{k+1}')$ similarly:
        \begin{equation}\label{eq:ineq_kappa}
            \begin{aligned}
                \mathbb{E}[\kappa^{(\rho)}(L'_{k+1})|L'_k]\leq\;&\displaystyle\sum_{x\in L'_k}\displaystyle\sum_{m,l\in\mathbb{N}}\frac{u\rho(m+l)}{\mu_1(\rho)+1}\cdot P^x[\widetilde H_{L_k'}>m]\cdot\mu(x,m,l;L'_k)[\kappa^{(\rho)}(\cdot)]\\
                \leq\;&\displaystyle\sum_{x\in L'_k}\displaystyle\sum_{m,l\in\mathbb{N}}\frac{u\rho(m+l)}{\mu_1(\rho)+1}\cdot P_x[\widetilde{H}_{L'_k}>m]\cdot(1+\epsilon/2)\frac{\pi^2}{8}\cdot(m+l)\log^{-1}\mu_1(\rho)\\
                &+\displaystyle\sum_{x\in L'_k}\displaystyle\sum_{m,l\in\mathbb{N}}\frac{u\rho(m+l)}{\mu_1(\rho)+1}\cdot \text{err}^{(\rho)}(m+l)\\
                \leq\;&(1+\epsilon/2)\frac{\pi^2}{8}\cdot\frac{u\cdot \mu_2(\rho)\cdot\kappa^{(\rho)}(L'_k)}{\mu_1(\rho)\log \mu_1(\rho)}\\
                &+|\mathcal{L}_k|\cdot\frac{u}{\mu_1(\rho)+1}\displaystyle\sum_{m=0}^\infty(m+1)\rho(m)\cdot \text{err}^{(\rho)}(m),
            \end{aligned}
        \end{equation}
        where $(u,\rho)=(u_n^-,\rho_n)$, and
        $$\text{err}^{(\rho)}(m):=E^{\bm0}\left[\kappa^{(\rho)}(X[0,m])\mathbf 1_{\{\kappa^{(\rho)}(X[0,m])>(1+\epsilon/2)\frac{\pi^2}{8}m\log^{-1}\mu_1(\rho)\}}\right].$$
        Here, in the second inequality, we have used
        $$\begin{aligned}
            {\rm err}^{(\rho)}(m+l)\ge\;&E^{\bm0}[\kappa^{(\rho)}(X[-m,l])\mathbf 1_{\kappa^{(\rho)}(X[-m,l])>M}\mathbf1_{X[-m,0)\cap L_k'=\emptyset}]\\
            \ge\;& P^x[\widetilde H_{L_k'}>m]\mu(x,m,l;L_k')[\kappa^{(\rho)}(\cdot)\mathbf1_{\kappa^{(\rho)}(\cdot)>M}],
        \end{aligned}$$
        where $M=(1+\epsilon/2)\frac{\pi^2}8\cdot(m+l)\log^{-1}\mu_1(\rho)$.
        Taking $(u,\rho)=(u_n^-,\rho_n)$, we have
        \begin{equation}\label{eq:un-}             (1+\epsilon/2)\varepsilon_d\cdot\frac{u_n^-\mu_2(\rho_n)}{\mu_1(\rho_n)}<1-\epsilon/2.
        \end{equation}
        Moreover, we claim that
        \begin{equation}\label{eq:err}
            \lim_{n\to\infty}\frac{\log \mu_1(\rho_n)}{\mu_2(\rho_n)}\sum_{m=1}^\infty m\rho_n(m)\cdot\text{err}^{(\rho_n)}(m)=0.
        \end{equation}
        Inequality (\ref{eq:ineq_C}) then follows by combining (\ref{eq:ineq_kappa}) with (\ref{eq:un-}) and (\ref{eq:err}).
        
        We now verify the claim (\ref{eq:err}). To control $\text{err}^{(\rho_n)}(m)$, we compare $\kappa^{(\rho_n)}(X[0,m])$ with ${\rm cap}(X[0,m],\mu_1(\rho_n))$ via (\ref{eq:rho_bound4}), and use the concentration result on ${\rm cap}(X[0,m],\mu_1(\rho_n))$.
        
        Fix $\delta>0$, then by the definition of the appropriate family, there exists some $C'(\delta)$ such that for all $n$, 
        $$\frac{1}{\mu_2(\rho_n)}\sum_{m>C'\mu_1(\rho_n)}m^2\rho_n(m)<\delta.$$
        Similar to (\ref{eq:rho_bound4}),
        \begin{equation}\label{eq:kappa_bound}\begin{aligned}\kappa^{(\rho_n)}(X[0,m])\leq\;& {\rm cap}(X[0,m],\mu_1(\rho_n))+m\cdot\frac{\mu_1(\rho_n)}{\mu_2(\rho_n)}\mu_1(\rho_n)\log^{-10}\mu_1(\rho_n)\\\leq\;& {\rm cap}(X[0,m],\mu_1(\rho_n))+m\log^{-10}\mu_1(\rho_n),\end{aligned}\end{equation}
        so we have
        \begin{equation}\label{eq:err_bound}
            \text{err}^{(\rho_n)}(m)\leq \text{err}^{[\rho_n]}(m)+m\log^{-10} \mu_1(\rho_n),
        \end{equation}
        where $$\text{err}^{[\rho_n]}(m)=E^{\bm0}\left[{\rm cap}(X[0,m],\mu_1(\rho_n))\mathbf1_{\{{\rm cap}(X[0,m],\mu_1(\rho_n))\geq(1+\epsilon/4)\frac{\pi^2}{8}m\log^{-1}\mu_1(\rho_n)\}}\right].$$
        To bound the second term on the right-hand side of (\ref{eq:err_bound}), note that
        $$\frac{\log \mu_1(\rho_n)}{\mu_2(\rho_n)}\sum_{m=1}^\infty m\rho_n(m)\cdot m\log^{-10}\mu_1(\rho_n)=\log^{1-10}\mu_1(\rho_n)\longrightarrow0.$$
        Now we only need to verify that
        \begin{equation}\label{eq:err'}
        \frac{\log \mu_1(\rho_n)}{\mu_2(\rho_n)}\sum_{m=1}^\infty m\rho_n(m)\cdot {\rm err}^{[\rho_n]}(m)\longrightarrow0.
        \end{equation}
        By Proposition (\ref{eq:Tcap_ex}), for $m<\delta \mu_1(\rho_n)$,
        \begin{equation}\label{eq:small_m}
            {\rm err}^{[\rho_n]}(m)\leq E^{\bm0}[{\rm cap}(X[0,m],m)]<Cm\log^{-1}m;
        \end{equation} while for $m>C'\mu_1(\rho_n)$,
        \begin{equation}\label{eq:large_m}\begin{aligned}
            {\rm err}^{[\rho_n]}(m)\leq\; &E^{\bm0}[{\rm cap}(X[0,m],\mu_1(\rho_n))]\leq\lceil m/\mu_1(\rho_n) \rceil\cdot E^{\bm0}[{\rm cap}(X[0,\mu_1(\rho_n)],\mu_1(\rho_n))]\\\leq \;&Cm\log^{-1}\mu_1(\rho_n);
        \end{aligned}\end{equation}
        while for $\delta \mu_1(\rho_n)\leq m\leq C'\mu_1(\rho_n)$, we claim that, for large $n$,
        \begin{equation}\label{eq:medium_m}
            {\rm err}^{[\rho_n]}(m)\leq\delta m\log^{-1}\mu_1(\rho_n).
        \end{equation}
        We defer the proof of (\ref{eq:medium_m}) for a few lines and finish the proof of (\ref{eq:err'}). By (\ref{eq:small_m}), (\ref{eq:large_m}) and (\ref{eq:medium_m}), we have
        \begin{align*}&\sum_{m=1}^\infty m\rho_n(m)\cdot {\rm err}^{[\rho_n]}(m)\\\leq\;&\sum_{m=1}^{\lfloor\delta \mu_1(\rho_n)\rfloor}m^2\rho_n(m)[(\log^{-1}m)\wedge1]+\delta\sum_{m=\lfloor\delta \mu_1(\rho_n)\rfloor+1}^{\lfloor C'\mu_1(\rho_n)\rfloor}m^2\rho_n(m)\log^{-1}\mu_1(\rho_n)\\&+C\sum_{m=\lfloor C' \mu_1(\rho_n)\rfloor+1}^{\infty}m^2\rho_n(m)\log^{-1}\mu_1(\rho_n)\\
        \leq\;&2(\delta \mu_1(\rho_n))^2\log^{-1}\mu_1(\rho_n)+\delta \mu_2(\rho_n)\log^{-1}\mu_1(\rho_n)+C\delta \mu_2(\rho_n)\log^{-1}\mu_1(\rho_n)\\
        \leq\;&2C\delta \mu_2(\rho_n)\log^{-1}\mu_1(\rho_n),
        \end{align*}
        which implies (\ref{eq:err'}) since $\delta$ is arbitrary.
        
        We now prove (\ref{eq:medium_m}). For all $m\in[\delta \mu_1(\rho_n),C'\mu_1(\rho_n)]$, write $\xi={\rm cap}(X[0,m],\mu_1(\rho_n))$, $\xi'={\rm cap}(X[0,m])$, $a=E^{\bm0}[\xi]$, $a'=E^{\bm0}[\xi']$ and $\widetilde a=\frac{\pi^2}{8}m\log^{-1}\mu_1(\rho_n)$. By Proposition \ref{trace_cap_origin_new} and the Markov inequality,
        \begin{align*}
            E^{\bm0}[\xi';\xi'>(1+\epsilon/8)\widetilde a]\le\;&a' P^{\bm0}[\xi'-a'>(1+\epsilon/8)\widetilde a-a']+E^{\bm0}[\xi'-a';\xi'-a'>(1+\epsilon/8)\widetilde a-a']\\
            \leq\;& (1+\epsilon/16)\widetilde aP^{\bm0}[\xi'- a'>\epsilon \widetilde a/16]+E^{\bm0}[(\xi'-a')\mathbf1_{\{\xi'-a'>\epsilon\widetilde a/16\}}]\\\leq\;&(1+\epsilon/16)\widetilde a(16/\epsilon\widetilde a)^2\cdot {\rm var}(\xi')+16/\epsilon\widetilde a\cdot {\rm var}(\xi')\leq C(\widetilde a)^{-1}{\rm var}(\xi)\\
            \leq\;& C\widetilde a\log^{-2}\mu_1(\rho_n),
        \end{align*}
        where in the second inequality we have used $a'<(1+\epsilon/16)\widetilde a$ for all large $n$, which is deduced from (\ref{eq:cap_ex}).
        Moreover, by (\ref{eq:dexpectation}) and (\ref{eq:dexpectation''}), 
        $$P^{\bm0}[\xi-\xi'>\epsilon a/8]\le\frac8{\epsilon a}E^{\bm0}[\xi-\xi']<C\log^{-1/2}\mu_1(\rho_n).$$ Thus, we have
        $$\begin{aligned}
            {\rm err}^{[\rho_n]}(m)=&\;E^{\bm0}[\xi\mathbbm1_{\xi\ge(1+\epsilon/4)\widetilde a}]\le E^{\bm0}[\xi'\mathbbm1_{\xi\ge(1+\epsilon/4)\widetilde a}]+E^{\bm0}[\xi-\xi']\\
            \le&\;E^{\bm0}[\xi'\mathbbm1_{\xi'\le(1+\epsilon/8)\widetilde a}\mathbbm1_{\xi\ge(1+\epsilon/4)\widetilde a}]+E^{\bm0}[\xi'\mathbbm1_{\xi'>(1+\epsilon/8)\widetilde a}]+E^{\bm0}[\xi-\xi']\\
            \le&\;(1+\epsilon/8)\widetilde a\cdot P^{\bm0}[\xi-\xi'\ge\epsilon \widetilde a/8]+C\widetilde a\log^{-2}\mu_1(\rho_n)+C\widetilde a\log^{-1/2}\mu_1(\rho_n)\\
            \le&\;C\widetilde a\log^{-1/2}\mu_1(\rho_n).
        \end{aligned}$$
        (\ref{eq:medium_m}) then follows by taking $n\in\mathbb N$ large.
    \end{proof}

    \begin{remark}
        Recall the definition of the $\rho$-capacity for all $\rho\in\mathcal M$. It is then not hard to see that for all finite $A\subset\mathbb Z^d$,
        $$\kappa^{(\rho)}(A)={\rm cap}^{(\hat\rho)}(A),$$
        where $\hat\rho=\sum_{k=0}^\infty\frac{k\rho(k)}{\mu_1(\rho)}\delta_k$.
    \end{remark}
    \begin{remark}
        If we only deal with the lower bound for the original finitary random interlacement with geometric killing, the argument can be greatly simplified by the following observation on random walks conditioned on not returning to a given set.
        \begin{observation}For any $A\subset \mathbb{Z}^d$, $x\in A$ and $T\in\mathbb{N}$, let $X^1$ be a simple random walk from $x$ and $X^2$ be a simple random walk from $x$ conditioned not to return to $A$. Let $N^i\sim\text{Geo}(\frac{1}{T+1})$ be independent of $X^i$, $i=1,2$. Then there is a coupling between $(X_t^1)_{t=0}^{N^1}$ and $(X_t^2)_{t=0}^{N^2}$ such that 
            \begin{enumerate}
		      \item almost surely, $N^1\ge N^2$; 
                \item almost surely, there exists a random point $y_{\dagger}\in \mathbb{Z}^d$ such that
		      \begin{equation}
                    X_{N^1-N^2+j}^{1}+y_{\dagger}= X_{j}^{2}, \ \ \forall 0\le j\le N^2. 
		      \end{equation}
            \end{enumerate}
            As a direct consequence, the capacity of $(X_n^{1})_{n=0}^{N^1}$ stochastically dominates the capacity of $(X_n^{2})_{n=0}^{N^2}$.
        \end{observation}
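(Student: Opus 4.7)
The plan is to construct the coupling via a last-visit decomposition of $X^1$ to $A$. Since $X^1_0=x\in A$, the random time
$L\coloneq\max\{t:\,0\le t\le N^1,\;X^1_t\in A\}$
is almost surely well-defined. I then set
\begin{equation*}
    N^2\coloneq N^1-L,\qquad y_\dagger\coloneq x-X^1_L,\qquad X^2_j\coloneq X^1_{L+j}+y_\dagger\quad(0\le j\le N^2),
\end{equation*}
so that $L\ge0$ yields item (1), $X^2_0=x$, and the identity in item (2) is tautological. Each increment $X^2_{j+1}-X^2_j$ equals the SRW increment $X^1_{L+j+1}-X^1_{L+j}$, so $(X^2_j)_{0\le j\le N^2}$ is a nearest-neighbor path from $x$, and by the definition of $L$ it avoids the translate $A+y_\dagger$ for $1\le j\le N^2$.

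The main step is verifying that $(X^2,N^2)$ defined this way has the prescribed marginal---namely, a SRW from $x$ conditioned not to return to $A$, killed at an independent ${\rm Geo}(1/(T+1))$ time. Using the Markov property of $X^1$ at time $L$ and the independence of $N^1$ from $X^1$, the joint law of $(L,X^1_L,X^1[L,N^1],N^2)$ factors: the marginal of $X^1_L$ on $A$ involves the killed Green function $G_\lambda(x,\cdot)=\sum_{k\ge 0}(1-\lambda)^kp^k(x,\cdot)$ with $\lambda=1/(T+1)$, and conditionally on $X^1_L=y$, the pair $(X^1[L,N^1],N^2)$ is a SRW from $y$ killed at an independent ${\rm Geo}(\lambda)$ time, conditioned on not revisiting $A$ during its lifetime. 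Translating by $y_\dagger$ and averaging over $y\in A$ should then recover the declared marginal for $(X^2,N^2)$. Granted the coupling, translation invariance and monotonicity of capacity give
\begin{equation*}
    {\rm cap}(X^2[0,N^2])={\rm cap}(X^1[L,N^1])\le{\rm cap}(X^1[0,N^1])
\end{equation*}
pointwise, so ${\rm cap}(X^1[0,N^1])$ stochastically dominates ${\rm cap}(X^2[0,N^2])$.

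The hard part will be the marginal identification. Because $y_\dagger=x-X^1_L$ is random, the set that $X^2$ avoids in the coupling is the random translate $A+X^1_L-x$, which coincides with $A$ only on the event $\{X^1_L=x\}$. Matching the marginal of $X^2$ to the prescribed law therefore depends on the precise interpretation of ``conditioned not to return to $A$'': under the kill-then-condition reading (i.e.\ conditioning the killed walk on $\{\widetilde H_A>N^2\}$), the factorization described above fits essentially directly, with the averaging over $y\in A$ absorbing the random translate; whereas for the Doob $h$-transform reading one would insert a further reweighting by $P^y[\widetilde H_A=\infty]$ into the coupling. In either case the resulting pointwise capacity bound is unchanged, because capacity is translation-invariant, and this is all that the observation's stated consequence requires.
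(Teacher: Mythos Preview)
Your last-visit construction does not produce the correct marginal for $X^2$ when $|A|>1$, and the paragraph in which you try to explain this away is where the argument breaks. Conditionally on $X^1_L=y$, the post-$L$ segment is indeed a SRW from $y$ killed at an independent geometric time and conditioned on $\{\widetilde H_A>N^2\}$; but after translating by $y_\dagger=x-y$ you obtain a SRW from $x$ conditioned not to return to the shifted set $A-y+x$, not to $A$. Averaging over $y$ gives
\[
P[X^2=\gamma,\,N^2=m]\;=\;\lambda(1-\lambda)^m(2d)^{-m}\sum_{y\in A}G_\lambda(x,y)\,\mathbf 1\{\gamma\text{ avoids }A-y+x\text{ on }[1,m]\},
\]
which is a genuine mixture and does not collapse to the target law. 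For a concrete obstruction take $A=\{0,e_1\}$ and $x=0$: your construction gives $P[X^2_1=e_1]>0$ (from the summand $y=e_1$, since then $X^2$ is only required to avoid $\{-e_1,0\}$), whereas any walk from $0$ conditioned not to return to $\{0,e_1\}$ has $P[X^2_1=e_1]=0$. Neither the kill-then-condition nor the Doob-transform reading repairs this, and since you have not coupled to a walk with the prescribed law the capacity domination does not follow.

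The paper's construction avoids exactly this pitfall, and it is \emph{not} a last-visit decomposition. One sets $\tau_0=0$ and, for $k\ge1$, lets $\bar X^k_\cdot=X^1_{\tau_{k-1}+\cdot}-X^1_{\tau_{k-1}}$ (a fresh SRW by the strong Markov property), then defines $\tau_k=(\tau_{k-1}+\widetilde H_A(\bar X^k))\wedge N^1$. The key point is that $\widetilde H_A$ is evaluated for the \emph{translated} walk $\bar X^k$ against the fixed set $A$, so the $\tau_k$ are not the successive returns of $X^1$ itself to $A$, and $\tau_{k^*-1}$ is not your $L$. By the memoryless property each stage is an i.i.d.\ trial of ``does a SRW from $x$ return to $A$ before an independent geometric clock rings?'', and the output $\bar X^{k^*}$ is, by straightforward rejection sampling, distributed exactly as the target conditioned walk. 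Properties (1) and (2) then hold with $N^2=N^1-\tau_{k^*-1}$ and $y_\dagger=-X^1_{\tau_{k^*-1}}$.
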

    In fact, we can recursively construct a sequence of trajectories from $X^1$ as follows: let $\tau_0$; for $k\ge1$, let $\bar X^k=\phi^{\tau_{k-1}}X^1-X_{\tau_{k-1}}^1$, and let $\tau_k=(\tau_{k-1}+\widetilde H_A(\bar X^k))\wedge N^1$, where $\phi^k X^1$ is defined by
    $$\phi^k X_j^1=X^1_{j+k},\ \forall j\in\mathbb N$$
    and $\widetilde H_A(\bar X^k)$ is the first time $\bar X^k$ returns to $A$. Define $k^*\coloneq\inf\{k\ge0:\tau_k=N^1\}$. By the memoryless property of the geometric distribution, $(\bar X^{k^*}_t)_{0\le t\le\tau_{k^*}-\tau_{k^*-1}}$ is distributed as $(X^2_t)_{0\le t\le N^2}$, which yields the observation.
            
    Note that by Lemma \ref{local}, the number of trajectories in $\mathcal X^{u,{\rm Geo}(\frac 1{T+1})}$ that hits $A$ at $x$ follows ${\rm Poi}(ue_A^{(T)}(x))$, where $e_A^{(T)}(x)=\sum_{i=0}^\infty\frac{1}{T+1}(\frac{T}{T+1})^iP^x[\widetilde{H}_A>i]$, and the conditional law of trajectories given their total number is identical to that of $\zeta_1*\zeta_2$, where $\zeta_1\sim X^1[0,N^1]$ and $\zeta_2\sim\mathbb{P}[X^1[0,N^1]=\cdot\big|X(0,N^1]\cap A=\emptyset]$. By the above observation, the expected capacity of $\zeta_1*\zeta_2$ is at most twice that of $\zeta_1$. When $d\ge5$, it is not hard to check that as $T\to\infty$,
    $$\mathbb{E}[{\rm cap}^{(T)}(X^1[0,N^1])]=(1+o(1))\varepsilon_dT.$$
    Therefore,
    $$\mathbb{E}[{\rm cap}^{(T)}(V(\mathcal I^{u,T}[A]))]\leq\sum_{x\in A}ue_A^{(T)}(x)\cdot(2+o(1))\varepsilon_dT=(2+o(1))u\varepsilon_dT\cdot{\rm cap}^{(T)}(A).$$
    Taking $u=u_T^-:=\frac{1-\epsilon}{2\varepsilon_d}\cdot\frac{1}{T}$, we get
    $$\mathbb{E}[{\rm cap}^{(T)}(V(\mathcal I^{u,T}[A]))]\leq(1-\epsilon/2){\rm cap}^{(T)}(A)$$ for large $T$, which yields that $\mathcal I^{u_T^-,T}$ is subcritical. The value of $u_T^-$ matches $u_n^-$ defined below (\ref{eq:def_un}). When $d=4$, the same arguments hold by taking $u_T^-=\frac{4(1-\epsilon)}{\pi^2}\cdot T^{-1}\log T$.
    \end{remark}
\section{Proof of the upper bound of (\ref{new1.4})}
In this section, we will prove the upper bound of (\ref{new1.4}) for any appropriate family of distributions $(\rho_n)_{n\in\mathbb N}$.

Recalling $u_n^+$ from (\ref{eq:upm}), we will show that $\mathcal I^{u_n^+,\rho_n}$ is supercritical through an exploration algorithm, as outlined in last but two paragraph of Section 1. To be more specific, we first restrict to a truncated version of $\mathcal X^{u_n^+,\rho_n}$ (denoted by $\overline{\mathcal X}^{u_n^+,\rho_n}$) which consists of ``typical'' trajectories (rigorously defined later). Next, we divide the Euclidean space into boxes of side length $R_n\coloneq\lfloor\sqrt{\mu_1(\rho_n)}\rfloor$, with one-to-one correspondence with vertices of the coarse-grained lattice $\mathbb L_n\coloneq R_n\cdot\mathbb Z^d$, and start the exploration by revealing a cluster in the box corresponding to $\bm 0$ consisted of a fixed number of typical trajectories that satisfy a certain spatial restriction (we call such a collection of trajectories a ``seed''), and then start the exploration algorithm by declaring $\bm 0$ to be active. In each round of the subsequent exploration, we pick an active vertex and try to extend the revealed cluster inside the corresponding box to neighboring ones by revealing a certain number of layers of trajectories, one layer at a time. After that, we determine whether this new round of exploration is good. If so, and the last layer in the aforementioned extension contains a seed in each of the neighboring boxes, we activate the corresponding neighboring vertices and declare the current vertex to be surviving; otherwise, we say this round of exploration at the current vertex fails, and we declare the vertices within a neighborhood of range proportional to $R_n$ to be ruined (including those that has become surviving or active). By bounding from above the conditional probability of failure in each round,  which again involves comparison between the round of exploration process and a Galton--Watson branching process, we show that the vertices surviving in the end dominates a finitely dependent percolation model on the coarse-grained lattice $\mathbb L_n$, and prove its supercriticality with the Liggett--Schonmann--Stacey domination \cite{10.1214/aop/1024404279}.

We implement the aforementioned strategy in the following two subsections. In Subsection 4.1, we introduce some further notation necessary for the introduction of the exploration algorithm (henceforth referred to as the Algorithm) and gather some related preliminary results. In Subsection 4.2, we rigorously describe the Algorithm and prove the supercriticality assuming the estimate on the conditional probability of failure in each step (c.f. Proposition \ref{uniform_low}).

\subsection{Preparation for the Algorithm}
Let $\mathcal U_d:=\{e_1,...,e_d\}$ be the collection of canonical unit vectors on $\mathbb Z^d$, where $e_j$ is the unit vector with $j$-th component $1$, and write $\vec{1}=e_1+e_2+...+e_d$. Let 
\begin{equation}\label{eq:intro_Rn}
    R_n\coloneq\lfloor\sqrt{\mu_1(\rho_n)}\rfloor
\end{equation}
and define the corresponding coarse-grained lattice by $\mathbb L_n\coloneq R_n\cdot\mathbb Z^d$. Let 
\begin{equation}\label{eq:intro_zn}
    z_n=\lfloor R_n/2\rfloor\cdot\vec 1.
\end{equation}
For $x\in \mathbb L_n$, let $B_x^n=x+[0,R_n)^d$ and $\widetilde B_x^n=B(x+z_n,R_n/4)$ be two concentric boxes. Fix a constant
\begin{equation}\label{def_c}
    c=0.01.
\end{equation}
We fix the dimension $d\ge4$, the perturbative constant $\epsilon\in(0,1/2)$ introduced in (\ref{eq:upm}) and the appropriate family $(\rho_n)_{n\in\mathbb N}$, and drop the dependence on them thereinafter. We now introduce some extra symbols and terminology.

\hspace{\fill}

\textbf{Typical trajectories}: To facilitate further construction, we need to ignore certain atypical configurations of random walk trajectories in terms of the duration, capacity, volume and diameter.

Define a mesoscopic scale
$$L_n:=\begin{cases}(R_n)^\frac{2+c}{d-2},&d\geq5;\\R_n\log^{-c}R_n,&d=4\end{cases}$$
and a mildly increasing function with respect to $R_n$
$$I_n:=\begin{cases}(R_n)^c,&d\geq5;\\\log^cR_n,&d=4.\end{cases}$$
For $\eta\in W^{[0,\infty)}$, write $T=T(\eta)$. We introduce four parameters whose values will be determined solely depending on $d$, $\epsilon$ and $(\rho_n)_{n\in\mathbb N}$: parameters
\begin{equation}\label{eq:intro_kK}
    k,\,K\in\mathbb R_+
\end{equation}
put a restriction on the duration of typical trajectories, which will be fixed right after the statement of Lemma \ref{M_apriori}; the parameter
\begin{equation}\label{eq:intro_M}
    M\in\mathbb R_+
\end{equation}
controls the diameter and will be fixed in (\ref{eq:fix_Mtheta1}) and the parameter
\begin{equation}\label{eq:intro_theta2}
    \theta_1=\theta_1(q)\in(0,1),
\end{equation}
which depends on another parameter $q$ formally introduced in Lemma \ref{M_apriori}, controls the fluctuation of the capacity of typical trajectories, and will be fixed in (\ref{eq:fix_betatheta2k1}). We assume in following results in this subsection that these parameters are arbitrarily chosen, unless stated otherwise.

We say $\eta$ is {\it typical} if it satisfies all requirements below formulated by $(\mathcal E_i)_{i=1,2,3,4}$:
\begin{itemize}
    \item $\mathcal E_1=\mathcal E_1(k,K,M)\coloneq\{T\in[k(R_n)^2,K(R_n)^2]\mbox{ and }\eta\subset B(\eta(0), MR_n)\}.$
    \item If $d\geq5$, $$\mathcal E_2=\mathcal E_2(\theta_1)\coloneq\{(1-\theta_1^2)\varepsilon_dT<
    {\rm cap}({\rm range}(\eta))<(1+\theta_1^2)\varepsilon_dT\};$$
    if $d=4$,
    \begin{align*}
        \mathcal E_2=\mathcal E_2(\theta_1)\coloneq\{&{\rm cap}({\rm range}(\eta))>(1-\theta_1^2)\frac{\pi^2}{8}T\log^{-1}\mu_1(\rho_n),\\&{\rm cap}({\rm range}(\eta),R_n^2)<(1+\theta_1^2)\frac{\pi^2}{8}T\log^{-1}\mu_1(\rho_n)\}.
    \end{align*}
    \item $\mathcal E_3\coloneq\{|B(\eta,L_n)|<R_n^2L_n^{d-2}I_n^{1/3}\}.$
    \item  Let 
    \begin{equation}\label{eq:T'}
        T_n'=\begin{cases}\lfloor (R_n)^2\log^{-1}\mu_1(\rho_n)\rfloor,&d=4;\\\lfloor (R_n)^{2-c/4}\rfloor,&d\ge5.\end{cases}
    \end{equation}
    If $d=4$, 
    \begin{align*}
        \mathcal E_4=\mathcal E_4(\theta_1)\coloneq\{&{\rm cap}(\eta[jT_n',(j+1)T_n'\wedge T])\leq (1+\theta_1^2)\frac{\pi^2}{8}T_n'\log^{-1}\mu_1(\rho_n),\\
        &\text{diam}(\eta[jT_n',(j+1)T_n'\wedge T])<\frac{1}{2}T^{1/2}\log^{-c}\mu_1(\rho_n),\ \forall j\in\mathbb{N}, jT_n'\leq T\}.
    \end{align*}
    If $d\geq5$, 
    $$\mathcal E_4=\mathcal E_4(\theta_1)\coloneq\{{\rm cap}(\eta[jT_n',(j+1)T_n'\wedge T])\leq (1+\theta_1^2)\varepsilon_d T_n',\ \forall j\in\mathbb{N},jT_n'\leq T\}.$$
\end{itemize}

Let $\mathscr T_n\coloneq\{\eta\in W^{[0,\infty)}:\eta\mbox{ satisfies }\mathcal E_i,i=1,2,3,4\}$ denote the collection of typical trajectories. The definition of the typicality of a trajectory is dependent on $k,K,M,\theta_1$ and $c$, but we omit them in the notation for simplicity.

As stated in the following lemma, random walk trajectories of properly fixed length are typical with high probability. Recall the law $\mu(0,m,l)$ for any $m,l\in\mathbb N$ from below (\ref{eq:def_muxml}).
\begin{lemma}\label{le:good_trivial}
    There exists a decreasing function $\psi:\mathbb R_+\to(0,1)$ satisfying $\lim_{x\to\infty}\psi(x)=0$, such that
    $$\limsup_{n\to\infty}\sup_{k(R_n)^2\leq m+l\leq K(R_n)^2}\mu(0,m,l)(\mathscr T_n^c)<\psi(M/K^{1/2}).$$
\end{lemma}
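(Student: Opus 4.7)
The plan is to bound $\mu(0,m,l)(\mathcal{E}_i^c)$ separately for each $i=1,2,3,4$ and apply a union bound, with only the diameter bound in $\mathcal{E}_1$ contributing to $\psi(M/K^{1/2})$ in the limsup, while the other three contributions vanish as $n\to\infty$. Recall that under $\mu(0,m,l)$ the curve $\eta$ is simply a two-sided simple random walk of total length $T=m+l\in[kR_n^{2},KR_n^{2}]$, which we will treat as a one-sided walk of length $T$ after the obvious rerooting, so the estimates for $\mathcal{E}_2$--$\mathcal{E}_4$ follow from Propositions \ref{trace_cap_origin_new} and \ref{trace_cap_truncated_new} applied to the (rerooted) walk.

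First, for $\mathcal{E}_1$, the duration restriction is automatic under the supremum. The event that the simple random walk of length $T\le KR_n^{2}$ leaves $B(\eta(0),MR_n)$ is controlled uniformly in $n$ by the reflection principle together with Gaussian tail bounds, yielding a bound of the form $\psi_1(M/K^{1/2})$ with $\psi_1(x)\to 0$ as $x\to\infty$; this is the sole non-vanishing term in the limsup and will play the role of $\psi$. For $\mathcal{E}_2$, in $d\geq 5$ apply the concentration estimate \eqref{eq:cap_con} to $T\asymp R_n^{2}$ to obtain $\mu(0,m,l)(\mathcal{E}_2^c)\le C\theta_1^{-4}R_n^{-2}\log R_n\to 0$; in $d=4$ combine \eqref{eq:cap_con} (for the lower bound on ${\rm cap}({\rm range}(\eta))$) with \eqref{eq:Tcap_con} applied to the truncated capacity (for the upper bound), both of which vanish as $n\to\infty$.

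Next, for $\mathcal{E}_3$, partition $[0,T]$ into $N_n:=\lceil T/L_n^{2}\rceil\le CKR_n^{2}/L_n^{2}$ blocks of length at most $L_n^{2}$. By standard Gaussian tail estimates for simple random walk, the diameter of each block exceeds $L_n\cdot I_n^{1/(3d)}$ only with probability $\exp(-cI_n^{2/(3d)})$, which easily beats the factor $N_n$ in a union bound. On the complementary event, the $L_n$-neighborhood of each block is contained in a box of side $CL_n I_n^{1/(3d)}$, so its volume is at most $CL_n^{d}I_n^{1/3}$; summing over the $N_n$ blocks gives $|B(\eta,L_n)|\le CKR_n^{2}L_n^{d-2}I_n^{1/3}$, which fits inside the defining bound after absorbing constants into $I_n^{1/3}$ by taking $n$ large. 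For $\mathcal{E}_4$, split $\eta$ into $\lceil T/T_n'\rceil$ sub-paths of length $T_n'$; each is distributed as a simple random walk of length $T_n'$ (up to the trivial rerooting). Apply \eqref{eq:cap_con} to each sub-path and take a union bound: in $d=4$ this yields $C\log\mu_1(\rho_n)\cdot\log^{-2}T_n'\to 0$, and in $d\geq 5$ it yields $CR_n^{c/4}\cdot (T_n')^{-1}\log T_n'\to 0$, both by the definition \eqref{eq:T'} of $T_n'$. The additional diameter constraint in $d=4$ demands each sub-path's diameter to stay below $\tfrac{1}{2}T^{1/2}\log^{-c}\mu_1(\rho_n)$, which exceeds the typical scale $\sqrt{T_n'}\asymp R_n/\sqrt{\log\mu_1(\rho_n)}$ by a factor $\log^{1/2-c}\mu_1(\rho_n)\to\infty$ (since $c=0.01<1/2$), so a Gaussian tail estimate suffices after a union bound.

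The main obstacle is the volume estimate in $\mathcal{E}_3$, where one must balance a Gaussian diameter bound on each block (which gives a deterministic volume bound) against a union bound over a polynomial number of blocks; the chosen slack factor $I_n^{1/3}$ (growing in $n$ but only mildly, per the definition of $I_n$) provides precisely the cushion needed to absorb the block count and the $\log$ losses from Gaussian tails. With these four bounds in hand the union $\mu(0,m,l)(\mathscr{T}_n^{c})\le\sum_{i=1}^{4}\mu(0,m,l)(\mathcal{E}_i^{c})$ is uniform in $m+l\in[kR_n^{2},KR_n^{2}]$, and passing to the limsup leaves only the $\mathcal{E}_1$ contribution $\psi_1(M/K^{1/2})$; replacing $\psi_1$ by any slightly larger decreasing function $\psi:\mathbb{R}_+\to(0,1)$ with $\psi(x)\to 0$ completes the proof.
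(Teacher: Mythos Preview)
Your approach is essentially the same as the paper's: treat each $\mathcal{E}_i^c$ separately, with the Gaussian diameter tail for $\mathcal{E}_1$ being the sole term surviving the $\limsup$, the concentration estimates \eqref{eq:cap_con}--\eqref{eq:Tcap_con} handling $\mathcal{E}_2$ and $\mathcal{E}_4$ (the latter via a union bound over the $\lceil T/T_n'\rceil$ sub-paths), and a block-partition-plus-diameter argument for $\mathcal{E}_3$. The paper carries out exactly this plan.

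There is one small slip in your $\mathcal{E}_3$ step. After summing the per-block volumes you obtain $|B(\eta,L_n)|\le CKR_n^{2}L_n^{d-2}I_n^{1/3}$, but the threshold defining $\mathcal{E}_3$ is exactly $R_n^{2}L_n^{d-2}I_n^{1/3}$; the constant $CK$ cannot be ``absorbed into $I_n^{1/3}$ by taking $n$ large,'' since both sides scale with the same power of $I_n$. The fix is to leave yourself slack in the exponent: take the diameter threshold to be $L_n I_n^{1/(4d)}$ rather than $L_n I_n^{1/(3d)}$ (the Gaussian tail and union bound still go through), so the deterministic volume bound becomes $CKR_n^{2}L_n^{d-2}I_n^{1/4}$, which is below $R_n^{2}L_n^{d-2}I_n^{1/3}$ for large $n$ because $I_n\to\infty$. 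The paper's version partitions into blocks of length $(L_n/I_n^{1/3})^2$ with diameter threshold $L_n$, giving a volume bound of order $R_n^{2}L_n^{d-2}I_n^{2/3}$; the displayed inequality there compares this to $R_n^{2}L_n^{d-2}I_n$ rather than to the $I_n^{1/3}$ threshold in the definition of $\mathcal{E}_3$, so an analogous adjustment is implicitly needed there as well.
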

\begin{proof} By standard Gaussian estimates, there exists a function $\psi(x)$ tending to $0$ as $x\to\infty$, such that
\begin{equation}\label{eq:cE1}
    \mu(0,m,l)(\mathcal E_1^c)\le \frac12\psi(M/K^{1/2})
\end{equation}
for all $m+l\le KR_n^2$ and all large $n$.
By Propositions \ref{trace_cap_origin_new} and \ref{trace_cap_truncated_new}, for all $kR_n^2\le m+l\le KR_n^2$,
\begin{equation}\label{eq:cE2}
    \mu(0,m,l)(\mathcal E_2^c)<C\log^{-1/2}R_n,
\end{equation}
and
\begin{equation}\label{eq:cE4}
    \mu(0,m,l)(\mathcal E_4^c)<C\log^{-1}R_n.
\end{equation}
Combining (\ref{eq:cE1})-(\ref{eq:cE4}), we get
$$\limsup_{n\to\infty}\sup_{k(R_n)^2\le m+l\le K(R_n)^2}\mu(0,m,l)(\mathcal E_1^c\cup \mathcal E_2^c\cup \mathcal E_4^c)<\psi(M/K^{1/2}).$$

To control the probability of $\mathcal E_3^c$, first note that for all $m+l\le KR_n^2$,
$$\mu(0,m,l)(\mathcal E_3^c)\leq P^{\bm0}\left[|B(X[0,K(R_n)^2],L_n)|\geq R_n^2L_n^{d-2}I_n\right].$$
Then we divide $X[0,K(R_n)^2]$ into pieces of length $(L_n')^2$, where $L_n'=\lfloor L_n/I_n^{1/3}\rfloor$. For all $0\leq j\leq k\coloneq\lceil KR_n^2/(L_n')^2\rceil$, let $\xi_j=X[j(L_n')^2,(j+1)(L_n')^2]$, then we have, for all large $n$,
\begin{align*}
&P^{\bm0}\left[|B(X[0,K(R_n)^2],L_n)|\geq R_n^2L_n^{d-2}I_n\right]\leq P^{\bm0}[\exists\ 0\leq j\leq k, {\rm diam}(\xi_j)> L_n]\\\leq\;& kP^{\bm0}[{\rm diam}(X[0,(L_n')^2])>L_n]\leq CR_n^2/{(L_n')^2}\text{exp}(-I_n^{2/3}).\end{align*}
Here, the last inequality follows from a standard estimate of the transition probability of the simple random walk. Calculating the last term gives
\begin{equation}\label{eq:cE3}
    \mu(0,m,l)(\mathcal E_3^c)\le\begin{cases}
        CI_n^{8/3}\exp(-I_n^{2/3}),&d=4;\\
        \exp(-R_n^{c/2}),&d\ge5\end{cases}
\end{equation}
for all large $n$.
Therefore, we have
$$\limsup_{n\to\infty}\sup_{k(R_n)^2\le m+l\le K(R_n)^2}\mu(0,m,l)(\mathcal E_3^c)=0,$$
concluding the lemma.
\end{proof}

\hspace{\fill}

\textbf{Restriction within trajectories:} When revealing a new layer in the Algorithm, we need to avoid exploring trajectories hitting certain vertices (those with unusually small equilibrium measure, for instance) within trajectories in the last layer explored, which facilitates the characterization of the conditional law of the new layer. To this end, we introduce a further parameter
\begin{equation}\label{eq:intro_theta1}
    \theta_2\in(0,1)
\end{equation}
that will be fixed in (\ref{eq:fix_Mtheta1}).
For any $\eta\in W^{[0,\infty)}$, recall that $e_\eta$ is the equilibrium measure of ${\rm range}(\eta)$, and define the *-proper part of $\eta$ by
\begin{equation}\label{eq:defetahat}
    \widehat{\eta}=\begin{cases}\{\eta(s):s\in[0,T],e_\eta(s)\geq\theta_2\},&d\geq5;\\\{\eta(s):s\in[0,T],e_\eta(s)\geq\theta_2\log^{-1}\mu_1(\rho_n)\\\mbox{ and }P^x[\widetilde{H}_\eta>(R_n)^2\log^{-10}R_n]\leq(1+\theta_1)e_\eta(x)\},&d=4.\end{cases}
\end{equation}
We will need the following property which states that a random walk starting from $x\in\widehat\eta$ conditioned on not returning to ${\rm range}(\eta)$ forgets about the conditioning soon after departure. Recall the symbol ``$\simeq$'' for spatial translation of trajectories from Section 2.
\begin{lemma}\label{lem:forget}
Let $\eta\in \mathscr T_n$, $x\in\widehat\eta$. Let $$l_n=\begin{cases}\lfloor R_n^{1-c/4}\rfloor,&d\geq5;\\\lfloor R_n\log^{-2c}R_n\rfloor,&d=4\end{cases}$$ and
$$t_n=\begin{cases}\lfloor R_n^{2-c}\rfloor,&d\geq5;\\\lfloor R_n^2\log^{-10}R_n\rfloor,&d=4.\end{cases}$$
Then, for all large $n$, there is a coupling $\mathbb{Q}$ of $X^1\sim P^x[X\in\cdot|\widetilde{H}_\eta=\infty]$ and $X^2\sim P^x[X\in\cdot]$ such that 
$$\mathbb{Q}[E]\geq\begin{cases}1-R_n^{-c/2},&d\geq5;
\\1-2\theta_1,&d=4,\end{cases}$$
where
$$E:=\{|X^1_j-X^2_j|\leq l_n,\forall j\geq0\}\cap\{X^1[t_n,\infty)\simeq X^2[t_n,\infty)\}.$$
\end{lemma}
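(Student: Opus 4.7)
The plan is to construct $\mathbb Q$ in two independent stages, exploiting the fact that the diffusive scale $\sqrt{t_n\log t_n}$ is much smaller than $l_n$ in both dimensional regimes. In Stage~1, on the interval $[0,t_n]$, I would sample $X^1[0,t_n]$ and $X^2[0,t_n]$ \emph{independently} from their respective correct marginals. The Gaussian maximal inequality gives that $P^x[\max_{j\le t_n}|X_j-x|>l_n/2]$ is super-polynomially small in $R_n$ for the free walk $X^2$, and the identical bound for the conditioned walk $X^1$ follows from the pointwise comparison $P^x[\,\cdot\mid \widetilde H_\eta=\infty]\le P^x[\,\cdot\,]/e_\eta(x)$ after paying the bounded factor $1/\theta_2$ in $d\ge 5$ (resp.\ $\log\mu_1(\rho_n)/\theta_2$ in $d=4$), which is built into the defining conditions of $\widehat\eta$. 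Thus the event $\{|X^1_j-X^2_j|\le l_n,\ \forall j\le t_n\}$ holds with overwhelming probability.

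In Stage~2, on $[t_n,\infty)$, I would apply a shift-and-rejection coupling. Given $(z,y):=(X^1_{t_n},X^2_{t_n})$ from Stage~1, sample $X^2[t_n,\infty)$ as a free walk from $y$ and form $\widetilde X:=z+(X^2[t_n,\cdot\,]-y)$, which is a free walk from $z$. Set $X^1[t_n,\infty):=\widetilde X$ if $\widetilde X\cap \eta=\emptyset$; otherwise resample $X^1[t_n,\infty)$ independently from $P^z[\,\cdot\mid H_\eta=\infty]$. A short Markov-property check (using that the conditioned walk is an $h$-transform with the correct one-step kernel) confirms that the marginal of $X^1[t_n,\infty)$ given $X^1_{t_n}=z$ is indeed walk from $z$ conditioned on $H_\eta=\infty$, so $\mathbb Q$ is a valid coupling. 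Whenever the shift is accepted, $X^1_{t_n+j}-z=X^2_{t_n+j}-y$ for all $j\ge 0$, which yields both $X^1[t_n,\infty)\simeq X^2[t_n,\infty)$ and preserves $|X^1_j-X^2_j|=|z-y|\le l_n$ for all $j\ge t_n$; that is, the entire event $E$ occurs.

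It remains to bound the Stage~2 failure probability, which equals $1-\mathbb E[h(X^1_{t_n})]$ where $h(u):=P^u[H_\eta=\infty]$. Setting $\widetilde B:=\{\widetilde H_\eta>t_n\}$, a Markov decomposition at time $t_n$ yields the identities
$$e_\eta(x)=E^x[h(X_{t_n})\mathbf 1_{\widetilde B}],\qquad \mathbb E[h(X^1_{t_n})]=\frac{E^x[h^2(X_{t_n})\mathbf 1_{\widetilde B}]}{e_\eta(x)},$$
from which Cauchy--Schwarz gives $\mathbb E[h(X^1_{t_n})]\ge e_\eta(x)/P^x[\widetilde B]$. In $d=4$, the second defining property of $\widehat\eta$ is \emph{precisely} $P^x[\widetilde B]\le(1+\theta_1)e_\eta(x)$, so the failure probability is at most $\theta_1$, which together with the Stage~1 bound gives the total failure bound $2\theta_1$. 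In $d\ge 5$, the analogous estimate $P^x[\widetilde B]\le(1+O(R_n^{-c/2}))e_\eta(x)$ has to be proved directly: I would bound $P^x[t_n<\widetilde H_\eta<\infty]$ by applying the Markov property at time $t_n$ together with the hitting estimate \eqref{eq:hitting}, using $\mathcal E_2$ to control $\mathrm{cap}(\eta)\lesssim R_n^2$ and $\mathcal E_3$ together with the Gaussian heat-kernel bound on $X_{t_n}$ to preclude $X_{t_n}$ from landing too close to $\eta$.

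The main obstacle will be this quantitative bound on $P^x[\widetilde B]/e_\eta(x)$ in $d\ge 5$: the volume estimate from $\mathcal E_3$ is only just strong enough, and one needs to combine it carefully with the local CLT for $X_{t_n}$ to show that $d(X_{t_n},\eta)$ exceeds an appropriate power of $R_n$ with probability at least $1-O(R_n^{-c/2})$, uniformly over $x\in\widehat\eta$ and over $\eta\in\mathscr T_n$.
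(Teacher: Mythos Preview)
Your two-stage coupling is correct and matches the paper's proof in structure: run the two walks independently up to time $t_n$, then couple them by sharing a common free continuation (up to a shift), accepting when the candidate for $X^1[t_n,\infty)$ avoids $\eta$. The only notable difference is in Stage~1: the paper takes $X^1[0,t_n]$ to have the law of $P^x[\,\cdot\mid \widetilde H_\eta>t_n]$ rather than $P^x[\,\cdot\mid \widetilde H_\eta=\infty]$, so that the rejection probability becomes \emph{exactly} $1-e_\eta(x)/P^x[\widetilde H_\eta>t_n]$ without any further argument; your choice of conditioning on the full event $\{\widetilde H_\eta=\infty\}$ forces the Cauchy--Schwarz step, but it recovers the identical bound. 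For $d\ge 5$ the paper then controls $P^x[t_n<\widetilde H_\eta<\infty]\le P^x[X_{t_n}\in B(\eta,L_n)]+CL_n^{2-d}\mathrm{cap}(\eta)$ using $\mathcal E_2$, $\mathcal E_3$ and the LCLT at time $t_n$, exactly as you outline; the exponents work out with room to spare, so your stated ``main obstacle'' is routine rather than delicate.
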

\begin{proof}
    Consider independent random walk paths constructed in the same probability space $(\Omega,\mathscr F,\mathbb Q)$ with different laws: $X^3\sim P^x[X\in\cdot|\widetilde H_\eta>t_n]$, $X^4\sim P^x[X\in\cdot]$, $X^5\sim P^{\bm0}[X\in\cdot]$, $X^6\sim P^x[X\in\cdot|\widetilde H_\eta=\infty]$.
    Let $$X^1[0,\infty)=\begin{cases}X^3[0,t_n]*X^5[0,\infty),&\mbox{if }X^3[0,t_n]*X^5[0,\infty)\mbox{ avoids }\eta\mbox{ after time }0;
    \\X^6[0,\infty), &\mbox{otherwise}\end{cases}$$
    and let $X^2[0,\infty)=X^4[0,t_n]*X^5[0,\infty)$.
    Then $X^1$ and $X^2$ constructed in this fashion has desired marginal distributions. Define two events 
    $$E_1=\{X^1[0,\infty)=X^3[0,t_n]*X^5[0,\infty)\},$$ $$E_2=\{X^3[0,t_n],X^4[0,t_n]\subset B_x(l_n/2)\}.$$ Note that $E\supset E_1\cap E_2$, so it suffices to control the probability of $E_j^c,j=1,2$. For $d=4$, by the fact that $x\in \widehat\eta$ (recall the definition (\ref{eq:defetahat})), 
    $$e_\eta(x)\ge\frac{1}{1+\theta_1}P^x[\widetilde H_\eta>t_n],
    $$
    so it can be deduced easily that
    $$\mathbb Q[E_1^c]=\frac{P^x[t_n<\widetilde H_\eta<\infty]}{P^x[\widetilde H_\eta>t_n]}=1-\frac{e_\eta(x)}{P^x[\widetilde H_\eta>t_n]}<\theta_1;$$
    while for $d\geq5$,
    \begin{align*}&\mathbb Q[E_1^c]\leq \theta_2^{-1}P^x[X[t_n,\infty)\cap{\rm range}(\eta)\neq\emptyset]\\
    \leq\;&\theta_2^{-1}\left(P^x[X_{t_n}\in B(\eta,L_n)]+CL_n^{2-d}{\rm cap}({\rm range}(\eta))\right)\leq \frac12R_n^{-\frac c2}
    \end{align*}for large $n$, where in the first inequality we have used (\ref{eq:cap_ball}) and (\ref{eq:hitting}).
    By estimating the transition probability of the simple random walk and the definition of $\hat\eta$,
     $$\mathbb{Q}[X^4[0,t_n]\not\subset B(x,l_n/2)]=P^{\bm0}[X[0,t_n]\not\subset B(0,l_n)]\leq C\exp(-l_n/t_n^{1/2}),$$
    $$\mathbb{Q}[X^3[0,t_n]\not\subset B(x,l_n/2)]=P^x[X[0,t_n]\not\subset B(x,l_n)|\widetilde{H}_\eta>t_n]\leq C\theta_2^{-1}\log R_n\exp(-l_n/t_n^{1/2}),$$
    where the second inequality uses the fact that $e_\eta(x)>\theta_2 \log^{-1}(R_n)^2$. As a result,
    $$\mathbb Q[E_2^c]\le \frac12R_n^{-c/2}$$ 
    for all large $n$. So far, the lemma has been proved.
\end{proof}

For any fixed $\eta\in W^{[0,\infty)}$, and any $x\in{\rm range}(\eta)$, we define a probability measure $\overline\mu_{x,\eta}^*$ on $W^{[0,\infty)}$ which characterizes the law of the trajectories in an FRI point process that hits $\eta$ at $x$ by
\begin{equation}\label{eq:overline_mu_*}
    \overline{\mu}_{x,\eta}^*:=C\cdot\sum_{k(R_n)^2\leq m+l\leq K(R_n)^2}\rho_n(m+l)\cdot\mu(x,m,l;{\rm range}(\eta)),
\end{equation}
where $C=\sum_{kR_n^2\le m\le KR_n^2}(m+1)\rho_n(m)$ is a normalizing constant. Then, we consider the probability measure obtained by conditioning on $\mathscr T_n$:
\begin{equation}\label{eq:overline_mu}
    \overline{\mu}_{x,\eta}:=\overline{\mu}_{x,\eta}^*(\cdot|\mathscr T_n)
\end{equation}
Similarly, for all $x\in\mathbb Z^d$, we define a simpler probability measure that resemble $\overline\mu_{x,\eta}$ by
\begin{equation}\label{eq:mu}
    \mu_x:=C\cdot\sum_{k(R_n)^2\leq m+l\leq K(R_n)^2}\rho_n(m+l)\mu(x,m,l),
\end{equation}
where $C$ is again a normalizing constant.

Recall that we will compare a round of exploration with of Galton-Watson branching process with agents in the space of trajectories $W^{[0,\infty)}$. To guarantee the supercriticality of that branching process (namely the process $\overline Y$ defined below the proof of Proposition \ref{produce_seed}), we introduce the following lemma which focuses on the expected capacity of $\zeta\sim\overline{\mu}_{x,\eta}$ and can be derived an analogue of Lemma \ref{le:good_trivial} (c.f. Lemma \ref{le:good_trivial2}). Recall the parameters $k,K,M$ and $\theta_1$ introduced for the definition of typical trajectories from (\ref{eq:intro_kK})-(\ref{eq:intro_theta2}), and $\theta_2$ introduced for the definition of *-proper part from (\ref{eq:intro_theta1}).
\begin{lemma}\label{M_apriori}
    We can properly fix $k$ and $K$ such that the following holds. There exists $C_1$ and $C_2=C_2(K)\in(0,\infty)$ such that for all $M>C_2$ and $\theta_1,\theta_2<C_1$, the following holds for large $n$: for all $\eta\in W^{[0,\infty)}$ and $x\in\widehat\eta$, 
    \begin{equation}\label{eq:mu(Tn)_lower}
        \mu_x(\mathscr T_n)\wedge\overline{\mu}_{x,\eta}^*(\mathscr T_n)>1-\epsilon/8
    \end{equation}
    and for a random trajectory $\zeta\sim\overline\mu_{x,\eta}$,
    \begin{equation}\label{eq:equi_lower}
        \mathbb{E}[e_\zeta(\widehat\zeta)]>(1-\epsilon/4)\varepsilon_d\frac{\mu_2(\rho_n)}{\mu_1(\rho_n)}(1+\log\mu_1(\rho_n)\mathbbm1_{d=4})^{-1}.
    \end{equation}
\end{lemma}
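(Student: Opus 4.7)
\emph{Proof proposal.} The strategy is to choose the four parameters in a careful order that decouples the two halves of the statement, then use the appropriate-family condition (Definition \ref{def:appropriate_family}) together with Lemmas \ref{le:good_trivial} and \ref{lem:forget} to push each estimate through. First I would fix $k$ and $K$ depending only on $\epsilon$ (and the family $(\rho_n)_{n\in\mathbb N}$). Using (\ref{eq:appropriate}) with $C$ large and noting that $R_n^2\asymp\mu_1(\rho_n)$, for $K$ large enough one has
\[
\sum_{L>KR_n^2}L^2\rho_n(L)\le\epsilon'\mu_2(\rho_n),
\]
and the range $L<kR_n^2$ contributes at most $k\mu_1(\rho_n)^2\ll\mu_2(\rho_n)$ to both moments since $\mu_2\ge\mu_1^2$. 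Consequently, the length distribution of a trajectory sampled from $\mu_x$ or $\overline\mu_{x,\eta}^*$ has expectation within $\epsilon'\mu_2(\rho_n)/\mu_1(\rho_n)$ of the full ratio.

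For (\ref{eq:mu(Tn)_lower}), the bound for $\mu_x$ is immediate: $\mu_x$ is a mixture of the laws $\mu(x,m,l)$ with $kR_n^2\le m+l\le KR_n^2$, so Lemma \ref{le:good_trivial} and translation invariance yield $\mu_x(\mathscr T_n^c)\le\psi(M/K^{1/2})$. Then I would fix $M$ large depending on $K$ to force $\psi(M/K^{1/2})<\epsilon/8$, which simultaneously determines the constant $C_2(K)$. For $\overline\mu_{x,\eta}^*$, the measure $\mu(x,m,l;{\rm range}(\eta))$ differs from $\mu(x,m,l)$ only through the past conditioning $X[-m,-1]\cap{\rm range}(\eta)=\emptyset$. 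By reversibility, the past is the time-reversal of a walk from $x$ conditioned on $\widetilde H_\eta>m$; since $x\in\widehat\eta$, the definition of the $*$-proper part forces the ratio $e_\eta(x)/P^x[\widetilde H_\eta>m]$ to be at least $1/(1+\theta_1)$ (in $d=4$, directly, and in $d\ge 5$ via $e_\eta(x)\ge\theta_2$), so that the conditioned walk can be coupled with an unconditional one via Lemma \ref{lem:forget} up to an error of order $\theta_1\vee R_n^{-c/2}$. Absorbing this coupling error together with $\psi(M/K^{1/2})$ by further enlarging $M$ gives $\overline\mu_{x,\eta}^*(\mathscr T_n)>1-\epsilon/8$.

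For (\ref{eq:equi_lower}), the key is that on $\mathscr T_n$ the $*$-proper part $\widehat\zeta$ carries almost all of the equilibrium mass. In $d\ge5$, $\zeta\setminus\widehat\zeta\subset\{e_\zeta<\theta_2\}$, so $e_\zeta(\zeta\setminus\widehat\zeta)\le\theta_2|\zeta|\le\theta_2(T+1)$, and combined with $\mathrm{cap}(\zeta)\ge(1-\theta_1^2)\varepsilon_dT$ from $\mathcal E_2$ this yields $e_\zeta(\widehat\zeta)\ge((1-\theta_1^2)\varepsilon_d-\theta_2)T-O(1)$. In $d=4$, I would split $\zeta\setminus\widehat\zeta$ into the ``small-equilibrium'' set $B$ (handled by $|B|\le|\zeta|$) and the ``ratio-bad'' set $A=\{P^x[\widetilde H_\zeta>R_n^2\log^{-10}R_n]>(1+\theta_1)e_\zeta(x)\}$. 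For $A$, the key algebraic observation is
\[
\theta_1\,e_\zeta(A)\le\sum_{x\in A}\bigl(P^x[\widetilde H_\zeta>R_n^2\log^{-10}R_n]-e_\zeta(x)\bigr)\le\mathrm{cap}(\zeta,R_n^2)-\mathrm{cap}(\zeta)\le2\theta_1^2\tfrac{\pi^2}{8}T\log^{-1}\mu_1(\rho_n),
\]
where the last step uses both halves of $\mathcal E_2$. Thus $e_\zeta(A)\le2\theta_1\cdot\frac{\pi^2}{8}T\log^{-1}\mu_1(\rho_n)$, and adding the $B$-contribution leaves $e_\zeta(\widehat\zeta)\ge(1-O(\theta_1+\theta_2))\cdot\frac{\pi^2}{8}T\log^{-1}\mu_1(\rho_n)$. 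Taking expectations under $\overline\mu_{x,\eta}$, dividing by $\overline\mu_{x,\eta}^*(\mathscr T_n)>1-\epsilon/8$, and using the computed value of $\mathbb E_{\overline\mu_{x,\eta}^*}[T]\approx\mu_2(\rho_n)/\mu_1(\rho_n)$, I would fix $C_1=C_1(\epsilon,d)$ small enough that $\theta_1,\theta_2<C_1$ makes all error terms beat $\epsilon/4$. The main technical obstacle is handling the past-conditioning in $\overline\mu_{x,\eta}^*$ uniformly across all allowed splits $m+l=L$ (including the awkward range $m\gtrsim R_n^2\log^{-10}R_n$), where the direct hypothesis in the definition of $\widehat\eta$ only covers $m=R_n^2\log^{-10}R_n$ in $d=4$; here the argument bootstraps using the monotonicity of $P^x[\widetilde H_\eta>m]$ in $m$ and a short initial-segment coupling.
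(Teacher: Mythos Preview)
Your proposal follows essentially the same route as the paper: fix $k,K$ via the appropriate-family condition, get \eqref{eq:mu(Tn)_lower} from Lemma~\ref{le:good_trivial} plus a Lemma~\ref{lem:forget}-coupling for the conditioned measure, and get \eqref{eq:equi_lower} by combining the $\mathcal E_2$ lower bound on $\mathrm{cap}(\zeta)$ with the pointwise bound $e_\zeta(\zeta\setminus\widehat\zeta)\le C(\theta_1+\theta_2)\widetilde C_n$ that you derive exactly as in the paper's \eqref{eq:equi_setminus}. Your computation of $\mathbb E[T(\zeta)]$ and the $d=4$ split of $\zeta\setminus\widehat\zeta$ into the ``small-equilibrium'' and ``ratio-bad'' parts are both correct and match the paper.

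One point where you should be more careful: you treat the bound $\overline\mu_{x,\eta}^*(\mathscr T_n)>1-\epsilon/8$ as following from the coupling of Lemma~\ref{lem:forget} alone. The paper separates this out as an auxiliary lemma (Lemma~\ref{le:good_trivial2}) and handles the four typicality events differently. The coupling transfers $\mathcal E_1$ and $\mathcal E_4$ cleanly (diameters and capacities of short sub-pieces are stable under an $l_n$-shift of one half of the path plus an altered segment of length $t_n$). But $\mathcal E_2$ and $\mathcal E_3$ involve the capacity and neighbourhood volume of the \emph{entire} trajectory, and these are not obviously stable under shifting one half of the path by $l_n$: the interaction between the shifted past and the unshifted future can change $\mathrm{cap}(\zeta^*)$ by more than the slack in $\mathcal E_2$. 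The paper therefore treats $\mathcal E_2,\mathcal E_3$ via the direct ratio bound
\[
\mu(x,m,l;\mathrm{range}(\eta))[\mathcal E_2^c\cup\mathcal E_3^c]\le\frac{\mu(x,m,l)[\mathcal E_2^c\cup\mathcal E_3^c]}{P^x[\widetilde H_\eta>m]},
\]
using that $x\in\widehat\eta$ forces $P^x[\widetilde H_\eta>m]\ge e_\eta(x)\gtrsim\theta_2(1+\log\mu_1(\rho_n)\mathbbm 1_{d=4})^{-1}$, and that the unconditional probabilities of $\mathcal E_2^c,\mathcal E_3^c$ decay faster than this denominator. You have all the ingredients for this (you even write down the relevant ratio $e_\eta(x)/P^x[\widetilde H_\eta>m]$), so the fix is just to route $\mathcal E_2,\mathcal E_3$ through the ratio bound rather than through the coupling. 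The ``awkward range'' of $m$ you flag at the end is then harmless, since the ratio bound uses only the monotone lower bound $P^x[\widetilde H_\eta>m]\ge e_\eta(x)$, valid for all $m$.
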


The proof is a combination of proved estimates and standard computation of equilibrium measures, so we put it in the appendix. From now on, we fix $k$ and $K$ and assume that $\theta_1,\theta_2<C_1$ and $M>C_2$, so that Lemma \ref{M_apriori} holds.

\hspace{\fill}

\textbf{Good sequence of trajectory sets}: In the Algorithm, we will need a criterion for the goodness of a round of exploration at a certain vertex which is defined through the notion of good sequence of trajectory sets presented below. To make our expositions below more concise, we start with general definitions.

For finite $A,B\subset\mathbb{Z}^d$ and $\rho\in\mathcal{M}$, define
\begin{equation}\label{eq:defvarphi}
    \varphi^{(\rho)}(A,B):=\sum_{x\in A,y\in B}e^{(\rho)}_A(x)e^{(\rho)}_B(y)g(x,y),
\end{equation} 
which measures the likelihood of a trajectory from FRI with length distribution $\rho$ hitting both $A$ and $B$. For $A,D\subset\mathbb Z^d$ and $\eta\in W^{[0,\infty)}$, we define the proper part of $\eta$ with respect to $(A,D)$ by
$$\text{pp}(\eta;A,D)\coloneq\widehat\eta\cap\left(\eta[\tau_A^\eta-R_n^2/I_n,\tau_A^\eta+R_n^2/I_n]\cup B(D,L_n)\right)^c,$$
see Figure \ref{fig:construct_pp}. In the sequel, we will take $A$ to be the vertex set induced by the latest layer we have revealed, and $D$ that induced by all the previous layers.
\begin{figure}
\centering
\includegraphics[width=0.5\linewidth]{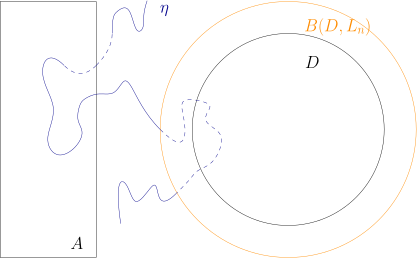}
\caption{\label{fig:construct_pp}This figure illustrates the construction of ${\rm pp}(\eta;A,D)$. The boundaries of sets $A$ and $D$ are shown in black, and that of the neighborhood $B(D,L_n)$ is shown in orange. The blue curve indicates the trajectory $\eta$, where the union of the solid segments represent the proper part ${\rm pp}(\eta;A,D)$.}
\end{figure}

For finite collections of trajectories $\mathcal A_1,...,\mathcal A_t,\mathcal D_0,...,\mathcal D_s\subset W^{[0,\infty)}$, write $\mathcal A_0=\mathcal D_s$. For all $\eta\in\mathcal A_i$($1\leq i\leq t$), we write $${\overline\eta}^i={\rm pp}\left(\eta;V(\mathcal A_{i-1}),V\left(\bigcup_{j=0}^s\mathcal D_j\cup\bigcup_{j=0}^{i-1}\mathcal A_j\right)\right).$$ We introduce a parameter
\begin{equation}\label{eq:intro_k1}
    k_1=k_1(\epsilon,q)\in\mathbb N
\end{equation}
which is dependent on $q$ introduced in Proposition \ref{uniform_low}, whose value will be fixed in (\ref{eq:fix_betatheta2k1}). Let $\widetilde C_n$ be the typical capacity of a random walk running $(R_n)^2$ steps:
\begin{equation}\label{eq:deftildeC}
    \widetilde{C}_n=\begin{cases}R_n^2,&d\geq5;\\R_n^2\log^{-1}R_n^2,&d=4\end{cases}
\end{equation}

\begin{definition}\label{def:good_exploration}
    We say $(\mathcal A_i)_{1\leq i\leq t}$ is {\it good} with respect to $(\mathcal D_i)_{0\leq i\leq s}$ if the following holds for all $i\in\{1,...,t\}$.
    \begin{itemize}
        \item For all $\eta_1,\eta_2\in\mathcal A_i$, ${\rm cap}^{(\rho_n)}(\widehat \eta _j\backslash{\overline\eta_j^i})<4\widetilde C_n/I_n^{1/2}$, $j=1,2$ and $\varphi({\overline\eta_1^i},{\overline\eta_2^i})<\widetilde{C}_n/I_n$.
        \item For all $\eta\in\mathcal A_i$, $\eta$ hits at most one trajectory $\zeta\in\mathcal A_{i-1}$, and $\eta(\tau_{\zeta}^\eta)\in{\overline{\zeta}^{i-1}}$.
        \item The number of trajectories in $\mathcal A_i$ hitting any certain trajectory in $\mathcal A_{i-1}$ is bounded from above by $k_1$.
    \end{itemize}
\end{definition}
In the sequel, we will let $(\mathcal A_i)_{1\le i\le t}$ be the first $t$ layers in the round of exploration at the current vertex, and take $(\mathcal D_i)_{0\le i\le s}$ to be the sequence of all layers explored before the current round.
\begin{remark}
    The above definition guarantees that for $0<t'<t$, $(\mathcal A_i)_{1\le i\le t'}$ is good with respect to $(\mathcal D_i)_{0\le i\le s}$ if $(\mathcal A_i)_{1\le i\le t}$ is.
\end{remark}

\subsection{The Algorithm}
We now introduce the exploration algorithm. Recall the general scheme described at the beginning of this section, where we start each round of exploration with a seed and sampling a certain number of layers. We use 
\begin{equation}\label{eq:intro_alpha}
    \alpha\in\mathbb N
\end{equation}
to denote the total number of layers in each round,
which will be fixed in (\ref{eq:alpha_condition}),
and introduce a parameter $\gamma$ that determines the range of boxes ruined due to an unsuccessful round of exploration:
\begin{equation}\label{eq:intro_gamma}
    \gamma=2\alpha M+2,
\end{equation}
where $M$ defined in (\ref{eq:intro_M}) corresponds to the maximum diameter of a typical trajectory. Let
\begin{equation}\label{eq:intro_beta}
    \beta=\beta(q)
\end{equation}
denote the number of trajectories in a seed, which will be fixed in (\ref{eq:fix_betatheta2k1}). Here, $q$ is the same as in (\ref{eq:intro_theta2}) and (\ref{eq:intro_k1}). Heretofore, we have introduced all the parameters necessary for the definition of Algorithm. Among them, note that $k,K,M,\theta_2$ and $\alpha$ solely depend on $d,\epsilon$ and $(\rho_n)_{n\in\mathbb N}$ and are fixed throughout the proof, although the choice of the latter three will not be covered until Subsection 5.2. Hence, in the remainder of the paper, we omit these parameters in statements and proofs if not necessary to avoid redundancy.

Recall that $\mathscr T_n$ is the collection of all typical trajectories, and define the restriction of $\mathcal X^{u_n^+,\rho_n}$ to $\mathscr T_n$ by $$\overline{\mathcal X}^{u_n^+,\rho_n}=\sum_{\eta\in\mathcal X^{u_n^+,\rho_n}\cap\mathscr T_n}\delta_{\eta}.$$ The algorithm reveals a series of trajectory sets (layers): $\mathcal{L}_{-1}$ will be the collection of trajectories in $\overline{\mathcal X}^{u_n^+,\rho_n}$ hitting a fixed point in the box associated with $\bm0$, which initiates our exploration; $\mathcal{L}_{x,i}$ ($x\in \mathbb L_n,1\leq i\leq\alpha$) denotes the $i$-th layer of trajectories explored in the round of exploration at $x$. We write $\mathcal L_x\coloneq\bigcup_{i=1}^\alpha\mathcal L_{x,i}$ if $x$ is ever explored. A sequence $\aleph_m=(\aleph_m^1,\aleph_m^2)\in(\mathbb L_n\times\{0,1\})\cup\{\Delta\},m=0,1,...$ ($\Delta$ is the cemetery state) will be used to record the result of the exploration: $\aleph_m^1$ is for the position of the $m$-th round of exploration and $\aleph_m^2$ is the indicator of failure in exploration. The state at each element of $\mathbb L_n$ will also be updated along with the exploration of $\overline{\mathcal I}^{u_n^+,\rho_n}$. All possible states of vertices are ``unexplored'',``active'', ``surviving'' and ``ruined''.  Before the implementation of the algorithm, we set all the vertices as unexplored.

Later in the description of the algorithm, we say the $m$-th round of exploration/the round of exploration at $x=\aleph_m^1$ is good until layer $i$($1\leq i\leq\alpha$) if $(\mathcal L_{x,j})_{1\leq j\leq i}$ is good with respect to $(\mathcal L_{-1},\mathcal L_{\aleph_0^1,1},...,\mathcal L_{\aleph_{m-1}^1,\alpha})$ (c.f. Definition \ref{def:good_exploration}), and write ``good'' as a shorthand for ``good until layer $\alpha$''. Here, we adopt the following notation for proper parts of trajectories: $${\overline\eta^{(j,x)}}\coloneq{\rm pp}\left(\eta;V(\mathcal L_{x,j-1}),V(\mathcal L_{-1}\cup\bigcup_{l=0}^{m-1}\mathcal L_{\aleph _l^1}\cup\bigcup_{l=1}^{j-1}\mathcal L_{x,l})\right)$$ for $\eta\in\mathcal L_{x,j}$, $x=\aleph_m^1$ and 
$${\overline\eta^{-1}}\coloneq{\rm pp}(\eta;\{z_n\},\emptyset)$$ for $\eta\in\mathcal L_{-1}$  (recall from the beginning of the section that $z_n$ is approximately the center of the box $B_x^n$). To make our exposition more concise, we will also use the notation $\overline\eta$ when no confusion arises. ${\rm range}(\eta)\backslash\overline\eta$ will be referred to as the improper part of $\eta$. 

We now introduce the notion of ``seed''.
\begin{definition}
    A seed $\mathcal{S}$ at $x\in \mathbb L_n$ is a subset of $\mathscr T_n$ which consists of $\beta$ typical trajectories staying inside the box $B_x^n$ and satisfy: for all $\eta_1,\eta_2\in\mathcal{S}$, $\varphi^{(\rho_n)}({\overline{\eta}_1},{\overline{\eta}_2})<\widetilde C_n/I_n$.
\end{definition}

Note that $W^{[0,\infty)}$ is countable. We choose an arbitrary ordering on $W^{[0,\infty)}$, one on the set $\mathbb L_n$ and one on the collection of all finite subsets $W^{[0,\infty)}$. The ordinals ``first'' and ``next'' in the following description of the algorithm all refer to these prescribed orders.

The algorithm is now described as follows.
\begin{itemize}
    \item Step 1. If there exists a seed at $\bm0$ that is a subset of $\mathcal{L}_{-1}:=\overline{\mathcal X}^{u_n^+,\rho_n}[\{\lfloor R_n/2\rfloor\cdot{\rm \vec 1}\}]$, set the origin as active, and denote the seed by $\mathcal{S}_{\bm0}$ (choose the first seed if there are multiple options). Meanwhile, let $\mathcal{J}_{{\bm0},0}=\{\lfloor R_n/2\rfloor\cdot\vec 1\}$, $J_{{\bm0},0}=\emptyset$, $x={\bm0}$ and $m=0$. If there is no seed, end the algorithm and set $\aleph_0=\Delta$.
    \item Step 2. The $m$-th round of exploration (the round of exploration at $x$):
        \begin{itemize}
            \item Substep 2A. Set $\aleph_m^1=x$. Let $\mathcal{L}_{x,0}=\mathcal{S}_x$. Let $i=1$.
            \item Substep 2B. Let $L_{x,i-1}:=V(\mathcal{L}_{x,i-1})$, $L_{x,i-1}':=\bigcup_{\eta\in\mathcal{L}_{x,i-1}}\widehat{\eta}$, $\mathcal{J}_{x,i}:=\mathcal{J}_{x,i-1}\cup \mathcal{L}_{x,i-1}$, $J_{x,i}:=J_{x,i-1}\cup L_{x,i-1}$ and $\mathcal{L}_{x,i}:=\overline{\mathcal X}^{u_n^+,\rho_n}[L_{x,i-1};J_{x,i-1};L_{x,i-1}']\backslash\mathcal{L}_{x,i-1}$. (Recall the definition of $\overline{\mathcal X}^{u_n^+,\rho_n}[\cdot]$ from (\ref{eq:defXADB}).)
            \item Substep 2C. If $i<\alpha$, increase $i$ by $1$ and go back to Substep 2B. Otherwise, proceed to Step 3.
        \end{itemize}
    \item Step 3. If the following two events
    \begin{equation}\label{eq:def_good}
        \text{Good}(x):=\{\mbox{the exploration at }x\mbox{ is good}\}
    \end{equation}
    and 
    \begin{equation}\label{eq:def_seed}
        \text{Seed}(x):=\{\forall e\in\mathcal U_d\cup(-\mathcal U_d)\mbox,\ \mathcal{L}_{x,\alpha}\mbox{ contains a seed at }x+R_n\cdot e\}
    \end{equation}
    both happen, then we declare $x$ to be surviving, set $y=x+R_n\cdot e$ as active if it is not yet explored or ruined, and let $\mathcal S_y$ be the first seed at $y$ contained in $\mathcal L_{x,\alpha}$, and set $x$ as explored and $\aleph_m^2=0$. Otherwise, we set all the vertices in $B(x,2\gamma\cdot R_n)\cap \mathbb L_n$ as ruined and set $\aleph_m^2=1$.
    \item Step 4. Increase $m$ by $1$. If there is no further active vertex, set $\aleph_m=\Delta$ and end the algorithm; otherwise, let $x'$ be the first active vertex, let $\mathcal{J}_{x',0}=\mathcal{J}_{x,\alpha}$, $J_{x',0}=J_{x,\alpha}$, and assign the value of $x'$ to $x$, and then go back to Step 2.
\end{itemize}

For $x=\aleph_m^1\in\mathbb L_n$ for some $m\in\mathbb N$, let $\mathcal L_x=\bigcup_{i=1}^\alpha\mathcal L_{x,i}$. Let $\tau=\inf\{m\geq0:\aleph_m=\Delta\}$, $\mathcal J=\mathcal L_{-1}\cup\bigcup_{i=0}^{\tau-1}\mathcal L_{\aleph_i^1}$. By our construction, $G(\mathcal J)$ is always connected. Let 
$$H:=\{\mathcal{L}_{-1}\mbox{ contains a seed at }\bm0\}$$ be the event that the exploration at $\bm0$ ever starts, whose probability is non-zero. For $m\in\mathbb{N},i\in\{0,1,...,\alpha-1\}$, define $$\mathcal{F}_{m,i}:=\sigma(\mathcal{L}_{-1},(\aleph_j^1)_{0\leq j\leq m},(\mathcal{L}_{\aleph_j^1,s})_{0\leq j<m,1\leq s\leq\alpha},(\mathcal{L}_{\aleph_j^1,s})_{1\leq s\leq i}).$$

Note that $\tau,\aleph_m$ and $H$ are all measurable with respect to $\overline{\mathcal X}^{u_n^+,\rho_n}$ and that $\mathcal I^{u_n^+,\rho_n}$ percolates as long as $\tau=\infty$. To this end, we define for all $0\le m<\tau$
$${\rm Fail}_m\coloneq\{\aleph_m^2=1\},$$
and say that the exploration at the vertex $\aleph_m^1$ is successful (resp. unsuccessful) if ${\rm Fail}_m^c$ (resp. ${\rm Fail}_m$) occurs.
We bound the probability of ${\rm Fail}_m$ from above conditionally on $\mathcal F_{m,0}$:
\begin{proposition}\label{uniform_low}
    For all $q>0$, we can choose $\beta,k_1$ and $\theta_1$ introduced in (\ref{eq:intro_beta}), (\ref{eq:intro_k1}) and (\ref{eq:intro_theta2}) respectively such that
    \begin{equation}\label{eq:uniform_low}
        \mathbb{P}[{\rm Fail}_m|\mathcal{F}_{m,0}](\omega)<q.
    \end{equation}
    for all large $n\in\mathbb{N}$, $m\geq 0$ and configuration $\omega$ of $\overline{\mathcal X}^{u_n^+,\rho_n}$ satisfying $\aleph_m(\omega)\neq\Delta$.
\end{proposition}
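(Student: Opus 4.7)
The plan is to dominate the round of exploration at $x=\aleph_m^1$ from below by a supercritical Galton--Watson process with agents in $\mathscr{T}_n$ whose offspring mean is bounded below by $1+\epsilon/2$ uniformly in $n$, and to leverage the rapid growth of this tree to produce seeds in all $2d$ neighboring boxes with high conditional probability. Fixing $\omega$ with $\aleph_m(\omega)\neq\Delta$, we work conditionally on $\mathcal{F}_{m,0}$, so the seed $\mathcal{S}_x$ and the entire previously explored trajectory set $J_{x,0}$ are prescribed. By Lemma \ref{local} and the Poissonian structure of $\mathcal{X}^{u_n^+,\rho_n}$, the trajectories comprising $\mathcal{L}_{x,i}$ can be generated, layer by layer, by first sampling at each hitting point $y$ on $\widehat{\eta}$ of each $\eta\in\mathcal{L}_{x,i-1}$ a Poisson number of trajectories whose law is well approximated (after conditioning on typicality) by $\overline{\mu}_{y,\eta}$, with the remaining trajectories in $\overline{\mathcal X}^{u_n^+,\rho_n}[L_{x,i-1};J_{x,i-1};L_{x,i-1}']$ being sampled from independent auxiliary PPPs.

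\textbf{Branching process domination.} I would introduce an idealized branching process $\overline{Y}$ on $\mathscr{T}_n$ in which an agent $\eta$ produces, independently of the rest, a Poisson$(u_n^+\cdot {\rm cap}^{(\rho_n)}(\widehat{\eta}))$ number of children, each sampled from $\overline{\mu}_{y,\eta}$ for appropriately chosen $y\in\widehat\eta$. Using that ${\rm cap}^{(\rho_n)}(\widehat\zeta)\ge e_\zeta(\widehat\zeta)$, Lemma \ref{M_apriori} yields
\[
u_n^+\cdot \mathbb{E}[{\rm cap}^{(\rho_n)}(\widehat{\zeta})]\geq u_n^+\cdot \mathbb{E}[e_\zeta(\widehat{\zeta})] \geq (1+\epsilon)(1-\epsilon/4)>1+\epsilon/2,
\]
so $\overline{Y}$ is supercritical with offspring mean bounded below uniformly in $n$. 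Starting from the $\beta$ trajectories in $\mathcal{S}_x$, standard Galton--Watson theory gives a survival probability bounded below by $1-(1-\rho_*)^\beta$ for some $\rho_*=\rho_*(\epsilon)>0$, and by choosing $\alpha$ large the $\alpha$-th generation has size at least $N_0:=\lceil(1+\epsilon/4)^\alpha\rceil$ with conditional probability at least $1-q/4$, provided $\beta$ is chosen large.

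\textbf{Coupling and the goodness event.} I would construct a coupling between $(\mathcal{L}_{x,i})_{1\le i\le \alpha}$ and the first $\alpha$ generations of $\overline Y$, and show that with conditional probability at least $1-q/2$ the two agree and $\text{Good}(x)$ holds. The discrepancies to control are: (i) rejection of atypical trajectories, controllable at rate $\le \psi(M/K^{1/2})$ per trajectory by Lemma \ref{le:good_trivial}, where $M$ is chosen large; (ii) the fact that a child in the actual exploration may hit $J_{x,i-1}$ before the new layer starts, which is controlled by Lemma \ref{lem:forget} through the decoupling of the conditioning $\{\widetilde{H}_\eta=\infty\}$ after the forget-time $t_n$, together with the diameter restriction $\mathcal{E}_1$ that keeps all discovered trajectories within $B(x,\alpha M R_n)\subset B(x,2\gamma R_n)$; (iii) verifying the three conditions in Definition \ref{def:good_exploration}, where the improper-piece and $\varphi^{(\rho_n)}$ bounds are typical events via a first/second moment computation of the intersection $\rho$-capacity based on (\ref{eq:hitting}) and the estimates from Lemma \ref{M_apriori}, the hit-in-proper-part condition is enforced by the very definition of $\widehat\eta$, and the count bound $k_1$ follows from a Poisson concentration after fixing $k_1$ large in terms of $u_n^+\cdot {\rm cap}^{(\rho_n)}(\widehat\eta)/|\widehat\eta|$.

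\textbf{Seed formation and main obstacle.} Finally, one must produce a seed in each of the $2d$ neighboring boxes within $\mathcal{L}_{x,\alpha}$. A trajectory in the $\alpha$-th generation, being typical and lying within distance $\alpha M R_n$ of $x$, has probability bounded below by some $c(M,\alpha)>0$ (by Brownian scaling and the decoupling after the forget-time, applied inductively along the branching tree) of being contained in any prescribed neighboring box of $x$. Combined with the $\varphi^{(\rho_n)}$ interaction bound between pairs of typical trajectories, a second-moment argument extracts a seed of $\beta$ mutually weakly-interacting elements in each neighboring box with conditional probability $\ge 1-q/(8d)$, provided $N_0$ (equivalently $\alpha$) is large. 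The principal obstacle is step (iii) of the coupling: ensuring that the intricate dependencies generated by previously explored trajectories do not spoil the Galton--Watson domination. This is the one place where the delicate interplay between Lemma \ref{lem:forget}, the *-proper part $\widehat\eta$, and the sub-additivity of ${\rm cap}^{(\rho_n)}$ across the sub-pieces $\widehat\eta\setminus\overline\eta^i$ is crucial; it is precisely why the truncation to $\mathscr T_n$ and the restriction to hitting within the proper part were built into the Algorithm.
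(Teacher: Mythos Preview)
Your overall architecture matches the paper's: split ${\rm Fail}_m$ into ${\rm Good}(x)^c\cup{\rm Seed}(x)^c$, couple the layer-by-layer exploration to a supercritical branching process in $\mathscr T_n$, verify goodness along the coupling, and then argue that enough descendants land in each neighboring box to form a seed. This is exactly how the paper organizes Section~5 via Propositions~\ref{good_exploration}, \ref{coupling1} and~\ref{produce_seed}.

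There are, however, two genuine gaps in your sketch. First, the offspring intensity you assign to $\overline Y$ is ${\rm cap}^{(\rho_n)}(\widehat\eta)$, but this quantity \emph{exceeds} $\sum_{x\in\widehat\eta}e_{{\rm range}(\eta)}^{(\rho_n)}(x)$ (since $\widehat\eta\subset{\rm range}(\eta)$), which is already an upper bound for what the actual exploration produces at $\eta$ before one subtracts trajectories hitting $J_{x,i-1}$. So $\overline Y$ as you define it is not dominated by the exploration, and the inequality ${\rm cap}^{(\rho_n)}(\widehat\zeta)\ge e_\zeta(\widehat\zeta)$ is irrelevant to the direction you need. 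The paper's dominated process $\overline Y$ uses intensity $(1-\epsilon/2)\sum_{x\in\widehat\eta}e_\eta(x)$ instead (Lemma~\ref{coupling1.5}); the point of Lemma~\ref{M_apriori} is precisely that $u_n^+\mathbb E[e_\zeta(\widehat\zeta)]$ itself already exceeds $1+\epsilon/2$ (up to the typicality correction), so no inflation to ${\rm cap}^{(\rho_n)}$ is needed.

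Second, and more seriously, the seed-formation step is where the real work lies, and ``Brownian scaling and decoupling after the forget-time, applied inductively'' does not capture it. To get a uniform-in-$n$ lower bound $c(M,\alpha)>0$ on the probability that an $\alpha$-th generation descendant lands in a fixed neighboring box, you must control the spatial law of the hitting point $K_i$ on the child trajectory, which is sampled according to the \emph{normalized restricted equilibrium measure} $\overline e_{\zeta_i}^0$, not uniformly, and whose transition depends on the parent via the conditioning in $\overline\mu_{K_{i-1},\zeta_{i-1}}$. The paper devotes all of Subsection~5.2 to this: Lemma~\ref{branch_coupling2} removes the dependence on the parent (using Lemma~\ref{lem:forget} and the $\mathcal E_2,\mathcal E_4$ constraints to control $d_{\rm TV}(\overline e_\zeta^0,e_\zeta^{*,0})$), and Lemma~\ref{branch_coupling3} replaces equilibrium-measure sampling by uniform sampling (using again $\mathcal E_2,\mathcal E_4$ to show the equilibrium measure is approximately uniform along the trajectory). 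Only after these two couplings can one invoke the LCLT on the resulting random-time random walk (Remark~\ref{re:Ksquare} and Proposition~\ref{lclt}). Your identification of the ``principal obstacle'' as the goodness coupling is therefore inverted: that part is handled by fairly routine intensity computations (Lemma~\ref{recursive_coupling1} and the proof of~(\ref{eq:e56})), while the spatial spreading of descendants is the delicate step and the reason the $\mathcal E_4$ condition was built into the definition of $\mathscr T_n$ in the first place.
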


Note that here we assume that $k,K,M$ introduced for the definition of typical trajectories, $\theta_2$ introduced for the definition of $*$-proper part and the number of layers per round $\alpha$ are all properly fixed. The proof of Proposition \ref{uniform_low} is deferred to Section 5. Now, to obtain the upper bound of (\ref{new1.4}), we introduce an auxiliary finitely dependent percolation model $\omega^{q}$, and prove that its cluster containing $\bm 0$, after proper rescaling, is dominated by the set of (coarse-grained) vertices that is surviving in the end which we denote by $V_a\subset\mathbb L_n$; then, we prove that $\omega^{q}$ is supercritical using the Liggett--Schonmann--Stacey domination \cite{10.1214/aop/1024404279}.

Recall $\gamma$ from (\ref{eq:intro_gamma}). Let $\xi_x\sim{\rm Ber}(q)$, $x\in \mathbb Z^d$ be i.i.d.\ random variables. For all $x\in \mathbb Z^d$, let
\begin{equation}\label{eq:def_omega}
    \omega_x^{q}=\begin{cases}0,\;&\exists y\in B(x,2\gamma),\ \xi_y=1;\\1,\;&\mbox{otherwise}.\end{cases}
\end{equation}
We say that $\omega^{q}$ percolates if there is an infinite self-avoiding nearest-neighbor path on $\mathbb Z^d$, such that for each vertex $x$ on the path, $\omega_x^{q}=1$. To put it another way, for every $x\in \mathbb Z^d$, all vertices in its neighborhood $B(x,2\gamma)$ is ruined simultaneously with probability $q$, and we consider the percolation on the  vertices that are not ruined. 

Let $\mathcal{C}_0^{q}\subset \mathbb L_n$ be the cluster of the origin in $\omega^{q}$, and we have the coupling below.
\begin{proposition}\label{coupling_bernoulli}
    For all $q\in(0,1)$, we can choose parameters $\beta,k_1$ and $\theta_1$ depending on $q$ such that for all large $n\in\mathbb{N}$, there is a coupling $\mathbb{Q}$ of $V_a$ with conditional law on $H$ and $\mathcal{C}_0^{q}$ such that $V_a\supset R_n\cdot\mathcal{C}_0^q$ on the event $\{\tau<\infty\}$.
\end{proposition}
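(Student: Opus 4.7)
The plan is to couple the FRI exploration with i.i.d.\ Bernoulli$(q)$ variables $(\xi_x)_{x\in\mathbb Z^d}$ in such a way that every failure event ${\rm Fail}_m$ of the algorithm forces $\xi_{\aleph_m^1}=1$, and then to propagate the inclusion $R_n\cdot\mathcal C_0^q\subset V_a$ by induction along open paths in $\omega^q$. The first step is to apply Proposition~\ref{uniform_low} to fix $\beta,k_1,\theta_1$ (all depending on $q$) so that, for large $n$,
\[
\mathbb P[{\rm Fail}_m\mid\mathcal F_{m,0}](\omega)<q
\]
holds uniformly over $m\geq 0$ and over configurations $\omega$ of $\overline{\mathcal X}^{u_n^+,\rho_n}$ with $\aleph_m(\omega)\neq\Delta$.

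For the coupling itself I would enlarge the probability space by an independent family $(U_x)_{x\in\mathbb Z^d}$ of i.i.d.\ Uniform$[0,1]$ variables. For each $m\geq 0$ with $\aleph_m\neq\Delta$, writing $x=\aleph_m^1$ and $p_m=\mathbb P[{\rm Fail}_m\mid\mathcal F_{m,0}]$, I would set $\xi_x=1$ on ${\rm Fail}_m$ and $\xi_x=\mathbbm 1\{U_x<(q-p_m)/(1-p_m)\}$ on ${\rm Fail}_m^c$; for every $y\in\mathbb Z^d$ that is never visited by the algorithm, set $\xi_y=\mathbbm 1\{U_y<q\}$. A direct conditional computation gives $\mathbb P[\xi_x=1\mid\mathcal F_{m,0}]=q$, and applying the tower property along a prescribed enumeration of $\mathbb Z^d$ that extends the order of exploration shows that the full family $(\xi_x)_{x\in\mathbb Z^d}$ is i.i.d.\ Bernoulli$(q)$. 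By construction, ${\rm Fail}_m\subset\{\xi_{\aleph_m^1}=1\}$ whenever $\aleph_m\neq\Delta$. Let $\omega^q$ be built from $(\xi_x)$ via (\ref{eq:def_omega}).

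To establish $R_n\cdot\mathcal C_0^q\subset V_a$ on the event $H$, I would argue by induction on the graph distance in $\omega^q$. Fix $x\in\mathcal C_0^q$ and an open path $0=x_0,x_1,\ldots,x_k=x$ in $\omega^q$; then $\omega^q_{x_i}=1$ yields $\xi_y=0$ for every $y\in B(x_i,2\gamma)$ and every $i$. Two consequences drive the induction. First, $R_nx_i$ can never be ruined during the algorithm, since a ruining event around $R_nx_i$ must originate from a failure at some $R_ny$ with $y\in B(x_i,2\gamma)$, which would force $\xi_y=1$. Second, once $R_nx_i$ becomes active---which happens as soon as $R_nx_{i-1}$ is declared surviving, given that $R_nx_i$ has not been ruined (and if it has already been explored, the same argument shows it must already belong to $V_a$)---the subsequent exploration at $R_nx_i$ must be successful, for $\xi_{x_i}=0$ precludes ${\rm Fail}$, so both ${\rm Good}(R_nx_i)$ and ${\rm Seed}(R_nx_i)$ hold. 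Thus $R_nx_i\in V_a$ and $R_nx_{i+1}$ is promoted to active, closing the induction.

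The principal technical subtlety I anticipate is the rigorous verification that $(\xi_x)_{x\in\mathbb Z^d}$ is genuinely i.i.d.\ Bernoulli$(q)$ despite being assembled from two different mechanisms depending on whether the vertex is visited by the random algorithm. This is resolved by the tower-property sketch above: for any two vertices $x,y$ with $T_x<T_y$ (where $T_z$ is the exploration time of $z$, or $\infty$ if never explored), conditioning on $\mathcal F_{T_y,0}$ makes $\xi_x$ a deterministic function of $U_x$ (and of $\mathcal F_{T_y,0}$), while $\xi_y$ given $\mathcal F_{T_y,0}$ is Bernoulli$(q)$ and independent of $U_x$; the general case follows by iteration. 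Beyond this, the $2\gamma R_n$ buffer in the ruining rule, which was chosen in (\ref{eq:intro_gamma}) precisely so that failures outside $B(R_nx,2\gamma R_n)$ cannot ruin $R_nx$, ensures that the two consequences in the induction are watertight.
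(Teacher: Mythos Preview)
Your proposal is correct and follows essentially the same strategy as the paper: dominate the failure indicators by i.i.d.\ Bernoulli$(q)$ variables via Proposition~\ref{uniform_low}, build $\omega^q$ from these, and then argue along open paths. The only notable difference is organizational: the paper indexes its Bernoulli family by exploration time $m$ (constructing $\xi_m\ge\aleph_m^2$) and transfers to space afterwards via $\chi'_x=\xi_m$ when $\aleph_m^1=x$, relying on the observation that $\aleph_m^1\in\sigma(\aleph_j^2:j<m)$; this yields the i.i.d.\ property directly and avoids the random-stopping-time bookkeeping in your tower-property sketch, which is the one place where your argument would need a little more care to be fully rigorous.
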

\begin{proof}[Proof assuming Proposition \ref{uniform_low}.]
    We use a sequence of configurations $\sigma^m\in\{{\rm U},{\rm S},{\rm A},{\rm R}\}^{\mathbb L_n}$ to record the change of states of vertices in $\mathbb L_n$: $\sigma_x^m$ stands for the state of $x$ right after the algorithm assigns some value to $\aleph_m^2$, and ${\rm U},{\rm S},{\rm A},{\rm R}$ correspond to unexplored, surviving, active and ruined respectively. Define the eventual state of $x$ by $\sigma_x\coloneq\lim_{m\to\infty}\sigma_x^{m\wedge{(\tau-1)}}$. We write $\mathbb{P}^H=\mathbb{P}[\cdot|H]$. Recall that for $x\in\mathbb L_n$ and $m\in\mathbb N$, $\sigma_x$ is the eventual state of $x$ and $\sigma_x^m$ is the state of $x$ after round $m$.  Since given $(\aleph_j)_{0\le j\le m-1}$, the choice of $\aleph_m^1$ is deterministic (according to the pre-fixed order on $\mathbb L_n$), one can argue by induction that the 0-1 variables $\aleph_j^2,j=0,...,m-1$ determine the location of the $m$-th round of exploration. Therefore, we have $\aleph_m^1,\{\aleph_m=\Delta\}\in\sigma(\aleph_j^2,0\leq j<m)$. 
    Assume that $\aleph_m\neq\Delta$, then by Proposition \ref{uniform_low},
    $$\mathbb{P}^H[\aleph_m^2=1|\aleph_j^2,0\leq j<m]=\mathbb{E}^H[\mathbb{P}[\text{Fail}_m|\mathcal{F}_{m,0}]\big|\aleph_j^2,0\leq j<m]\leq q,$$ where $\mathbb{E}^H=\mathbb{E}[\cdot|H]$ is the corresponding expectation. Thus, we can construct under $\mathbb{P}^H$ random variables $\xi_m,m\in\mathbb{N}$ that are i.i.d.\ with distribution ${\rm Ber}(q)$ (Bernoulli distribution with expectation $q$), such that $\aleph_m^2\leq\xi_m$ a.s. under $\mathbb{P}^H$. Moreover, let $\chi_x\sim {\rm Ber}(q),x\in\mathbb L_n$ be mutually independent and independent of $(\xi_m)_{m\in\mathbb{N}}$ and $(\aleph_m)_{m\in\mathbb{N}}$. Let
    $$\chi_x'=\begin{cases}\xi_m,&\aleph_m^1=x\mbox{ for some }m\in\mathbb{N};\\\chi_x,&\mbox{ otherwise.}\end{cases}$$
    Then we have $\chi_x'$, $x\in \mathbb L_n$ are i.i.d.\ with distribution ${\rm Ber}(q)$. Let $A=\bigcup_{\chi_x'=1}B(x,2\gamma R_n)$, and $\Omega_x=\mathbf{1}_{x\in A^c},x\in \mathbb L_n$, then $\omega:=(\Omega_{R_n\cdot x})_{x\in \mathbb Z^d}$ has the same distribution as $\omega^{q}$. Let $\mathcal{C}$ denote the cluster in $\omega$ containing $\bm0$. 
    
    We claim that for all $x\in\mathcal{C}$, $\sigma_{R_n\cdot x}=1$ ($R_n\cdot x$ is surviving eventually), which leads to our desired conclusion. Otherwise, let $x\in\mathcal{C}$ satisfy $\sigma_{R_n\cdot x}\neq1$. Let $\pi=(\pi(j))_{0\leq j\leq m}$ be a nearest-neighbor path on $\mathbb L_n$ in $\mathcal C $ that connects $\bm0$ to $R_n\cdot x$. For all $y=\pi(j),0\leq j\leq m$, $\chi_z'=0$ for all $z\in B(y,2\gamma R_n)\cap \mathbb L_n$. Thus, by our construction, $\aleph_j^2=\xi_j=0$ for all $j$ satisfying $\aleph_j^1\in B(y,2\gamma R_n)$, indicating that $\sigma_y\neq R$. By the definition of Algorithm, if $y\in\mathbb L_n$ ranks $j$-th according to the deterministic order we have fixed, $|\{m\in\mathbb N:\sigma_y^m=A\}|\leq j$, so $\sigma_y\neq 2$, which indicates the final state of any site cannot be active. By our assumption, one may only have $\sigma_y=U$. However, that would imply the existence of $0\leq j<m$ such that $\sigma_{\pi(j)}=1$ and $\sigma_{\pi(j+1)}=0$, which is impossible because $\pi(j+1)$ would have become active once the status of $\pi(j)$ turned ``surviving''.
\end{proof}

\begin{proof}[Proof of the upper bound of (\ref{new1.4}) assuming Proposition \ref{coupling_bernoulli}.]
    By \cite[Theorem 1.3]{10.1214/aop/1024404279}, we can take $q=q(\gamma)\in(0,1)$ sufficiently close to $0$ such that $\omega^{q}$ stochastically dominates a supercritical Bernoulli site percolation on $\mathbb Z^d$. Hence, $\mathcal{C}_0^{q}$ is infinite with non-zero probability. Take $\beta,k_1$ and $\theta_1$ depending on $q$ and satisfying the conditions of Proposition \ref{coupling_bernoulli}, then for large $n$,
    $$\mathbb{Q}[\tau<\infty]=\mathbb{Q}[\tau<\infty,V_a\mbox{ is finite}]\leq\mathbb{Q}[\mathcal{C}_0^{q}\mbox{ is finite}]<1.$$
    Therefore, for large $n$, $\{\tau=\infty\}$ happens with non-zero probability, so $\mathcal I^{u_n^+,\rho_n}$ is supercritical, which yields $u_*(\rho_n)\le u_n^+$.
\end{proof}

\section{Proof of Proposition \ref{uniform_low}.}\label{sec:5}
In this section, we prove Proposition \ref{uniform_low}. We divide the proof of Proposition \ref{uniform_low} into two parts, deriving lower bounds for two quantities: the conditional probability of the exploration at a vertex being good, and that of a good exploration at a vertex revealing new seeds for its neighbors, which respectively correspond to Propositions \ref{good_exploration} and \ref{produce_seed} below. Our strategy is to ignore the previously explored trajectories when revealing new ones in the Algorithm and view a layer of trajectories in the exploration as a generation of a branching process with agents in the space $W^{[0,\infty)}$, which is achieved through couplings. Then, we analyze the behavior of the branching process using the LCLT.

For all $\mathcal S\subset W^{[0,\infty)}$, let $ P_Y^\mathcal S$ be the law of the branching process with agents in $W^{[0,\infty)}$ with offspring distribution $\left(\overline{\mathcal X}^{u_n^+,\rho_n}[{\rm range}(\eta);\emptyset;\widehat\eta]\right)_{\eta\in W^{[0,\infty)}}$ starting from $\mathcal S$. Recall the definition of ${\rm Good}(x)$ and ${\rm Seed}(x)$ for $x\in\mathbb L_n$ from (\ref{eq:def_good}) and (\ref{eq:def_seed}) respectively.

\begin{proposition}\label{good_exploration}
    Arbitrarily choose $\beta\in\mathbb{N}$ in (\ref{eq:intro_beta}) and $r\in(0,1)$. Then there exists $C_3=C_3(\beta,r)\in\mathbb{N}$ and $C_4=C_4(\beta,k_1,r)>0$ such that for all $k_1>C_3$ and $\theta_1<C_4$,
    \begin{equation}
        \mathbb{P}[{\rm Good}(\aleph_m^1)|\mathcal{F}_{m,0}]>1-r
    \end{equation}
    whenever $\aleph_m^1\neq\Delta$ for large $n$.
\end{proposition}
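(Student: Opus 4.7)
The plan is to bound from above, by $r/(3\alpha)$, the conditional probability that any one of the three clauses of Definition \ref{def:good_exploration} fails at a single layer $i\in\{1,\ldots,\alpha\}$ of the round of exploration at $\aleph_m^1$; a union bound over layers and clauses then yields the claimed $1-r$ lower bound. Since $\alpha$ is a pre-fixed constant depending only on $d,\epsilon$ and $(\rho_n)_{n\in\mathbb{N}}$, this reduction loses nothing.

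The key technical device is a coupling of $\mathcal{L}_{\aleph_m^1,i}$ with the $i$-th generation of the branching process $P_Y^{\mathcal{S}}$ introduced just above the proposition. By Lemma \ref{local}, the PPP $\overline{\mathcal{X}}^{u_n^+,\rho_n}[L_{x,i-1};J_{x,i-1};L_{x,i-1}']$ is, conditionally on $\mathcal{F}_{m,i-1}$, a union over parents $\eta\in\mathcal{L}_{x,i-1}$ and hitting points $y\in\widehat\eta$ of independent rerooted trajectories, each conditioned to avoid the previously revealed set $J_{x,i-1}$. Because $y$ lies in the *-proper part $\widehat\eta$, Lemma \ref{lem:forget} couples each such conditioned walk with an unconditioned one at a cost of $O(\theta_1)$ in $d=4$ and $O(R_n^{-c/2})$ in $d\geq 5$. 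Modulo this coupling error, $\mathcal{L}_{x,i}$ becomes exactly the $i$-th generation of $P_Y^{\mathcal{S}}$, which has an offspring law per $\eta$ that is a PPP of typical trajectories hitting $\widehat\eta$.

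Condition (iii) fixes $k_1$: each parent produces a Poisson number of offspring with mean $u_n^+\cdot e_\eta^{(\rho_n)}(\widehat\eta)\cdot\overline{\mu}_{\cdot,\eta}^*(\mathscr{T}_n)$, which by Lemma \ref{M_apriori} together with (\ref{eq:def_un})--(\ref{eq:upm}) is a uniformly bounded constant (essentially $(1+\epsilon)/(1-\epsilon/4)$). A Poisson tail bound then lets us pick $k_1=C_3(\beta,r)$ so large that no parent produces more than $k_1$ offspring with overwhelming probability, which simultaneously caps the total branching population through generation $\alpha$ at $\beta k_1^\alpha$ and makes the aggregate coupling error above as small as desired in $\theta_1$. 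For condition (ii), the "hit in $\widehat\zeta$" clause is directly the good event of Lemma \ref{lem:forget}, and the "at most one parent" clause is bounded by $u_n^+\sum_{\eta_1\neq\eta_2}\varphi^{(\rho_n)}(\widehat\eta_1,\widehat\eta_2)$, which is small after invoking (\ref{eq:green's_estimate}), (\ref{eq:rho_bound4}) and the volume bound $\mathcal{E}_3$. For condition (i), the capacity bound on $\widehat\eta\setminus\overline\eta^i$ is handled by splitting this set into the hitting-time window $\eta[\tau-R_n^2/I_n,\tau+R_n^2/I_n]$, whose $\rho_n$-capacity is $O(\widetilde C_n/I_n)$ by $\mathcal{E}_4$ and (\ref{eq:rho_bound4}), and the piece $\widehat\eta\cap B(D,L_n)$, whose $\rho_n$-capacity is $O(\widetilde C_n/I_n^{1/2})$ by $\mathcal{E}_3$ together with the bound $\beta k_1^\alpha$ on the total number of previously revealed trajectories.

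The main obstacle is the bound $\varphi(\overline\eta_1^i,\overline\eta_2^i)<\widetilde{C}_n/I_n$. Since ${\rm cap}^{(\rho_n)}(\overline\eta_j^i)$ is of order $\widetilde C_n$, the naive estimate $\varphi\lesssim\widetilde C_n^{\,2}\sup g$ is too weak by a factor of $I_n$ without additional separation, and $\overline\eta_1^i,\overline\eta_2^i$ are \emph{not} automatically at distance $L_n$ from each other (only from previously explored sets). The proof must exploit the removal of $B({\rm range}(\mathcal{A}_{i-1}),L_n)$ from the proper parts, together with an LCLT-based Green's-function computation and the volume bound $\mathcal{E}_3$, to harvest the gain $1/I_n$. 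This is where the mesoscopic scales $L_n$, $T_n'$ and the logarithmic slack $I_n$ built into the typicality conditions interlock; carrying out this calculation is the quantitative core of Section \ref{sec:5}.
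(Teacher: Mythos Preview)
Your layer-by-layer scheme, the Poisson tail bound for clause (iii), and the decomposition of $\widehat\eta\setminus\overline\eta^i$ into the hitting window plus $\widehat\eta\cap B(D,L_n)$ are exactly what the paper does (Proposition \ref{recursive_good_exploration}, Lemma \ref{recursive_coupling1}, and the events $E_1$--$E_5$). Two of your attributions, however, point to the wrong mechanisms.

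First, the ``hit in $\overline\zeta^{i-1}$'' part of clause (ii) is \emph{not} the good event of Lemma \ref{lem:forget}. The hitting point of an offspring at its parent is the rerooting point, fixed before any forward or backward path is sampled; Lemma \ref{lem:forget} says nothing about its location. What controls this is the \emph{inductive hypothesis}: clause (i) at layer $i-1$ gives ${\rm cap}^{(\rho_n)}(\widehat\zeta\setminus\overline\zeta^{i-1})<4\widetilde C_n/I_n^{1/2}$, so the PPP intensity of offspring hitting in the improper part is $u_n^+\cdot O(\widetilde C_n/I_n^{1/2})=O(I_n^{-1/2})$. The same inductive input (the $\varphi$ bound at layer $i-1$) handles the ``at most one parent'' part via the intensity computation (\ref{eq:intensity_control_bad}). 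Your write-up does not make this recursive use of clause (i) explicit, and it is the reason the argument has to be an induction on $i$ rather than a direct union bound.

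Second, and more important, your mechanism for $\varphi(\overline\eta_1^i,\overline\eta_2^i)<\widetilde C_n/I_n$ does not work as stated. Removing $B({\rm range}(\mathcal A_{i-1}),L_n)$ gives no separation between siblings: two offspring of the same parent can be close to each other while both being far from the parent, so there is no $L_n$-scale distance to feed into Green's function. The paper does not use the $B(D,L_n)$ removal or $\mathcal E_3$ here at all. Instead it proves Lemma \ref{le:free_varphi}, a uniform bound
\[
\sup_{y,z}E^{y,z}\bigl[\varphi^{(\rho_n)}(X^1[0,KR_n^2],X^2[0,KR_n^2])\bigr]\le
\begin{cases} CR_n^2\log^{-2}R_n,&d=4,\\ CR_n,&d=5,\end{cases}
\]
for \emph{free} walks, by summing $\sum_{j,l}g(X_j^1,X_l^2)$ directly (and, in $d=4$, chopping into pieces of length $L_n$ and using that segment capacities are $O(L_n\log^{-1}R_n)$, which harvests the second logarithm). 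The time-window removal is used only so that Lemma \ref{lem:forget} can replace the conditioned backward paths by free ones, after which Markov's inequality against the display above closes the argument.
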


Adapting part of the proof of the above proposition, we obtain the following coupling between the sequence of layers $(\mathcal L_{x,i})_{1\le i\le\alpha}$ and the branching process $(Y_i)_{1\le i\le\alpha}$.
\begin{proposition}\label{coupling1}
    Arbitrarily choose $\beta\in\mathbb{N}$ in (\ref{eq:intro_beta}) and $r\in(0,1)$. Let $m\in\mathbb{N}$ and assume that $\aleph_m^1=x\in \mathbb L_n$. Then, there exists some $C_4^*=C_4^*(\beta,r)$ such that for all $\theta_1>C_4^*$ and all large $n$, we can find a coupling $\mathbb{Q}$ of $(\mathcal{L}_{x,i})_{1\leq i\leq\alpha}$ with law conditioned on $\mathcal{F}_{m,0}$ and $(Y_i)_{i\geq0}$ with law $P_Y^{\mathcal{S}_x}$ satisfying
    \begin{equation}\label{eq:l=y}\mathbb{Q}[\mathcal L_{x,i}=Y_i,i=1,2,...,\alpha]>1-r.
    \end{equation}
\end{proposition}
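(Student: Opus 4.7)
My plan is to construct the coupling $\mathbb{Q}$ inductively, layer by layer. The branching process and the actual exploration both start from $\mathcal{S}_x$ (so $Y_0 = \mathcal{L}_{x,0}$), giving the trivial base case. The key insight is that on the event $\mathrm{Good}(\aleph_m^1)$ from \eqref{eq:def_good}, the actual exploration shares the same parent--child structure as the branching process, so I only need to bound, with high probability, the discrepancy between the conditional offspring laws at each step.

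First I would invoke Proposition~\ref{good_exploration} to choose $k_1$ large and $\theta_1$ small (depending only on $r$ and $\beta$) so that $\mathbb{P}[\mathrm{Good}(\aleph_m^1)\mid\mathcal{F}_{m,0}] > 1-r/2$, and restrict attention to this event. On $\mathrm{Good}(x)$, every trajectory in $\mathcal{L}_{x,i}$ hits a unique ancestor $\zeta \in \mathcal{L}_{x,i-1}$ at a site of the proper part $\overline{\zeta}$; moreover each ancestor has at most $k_1$ children, so $|\mathcal{L}_{x,i}| \le \beta k_1^{i}$ throughout the round.

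For the inductive step at layer $i$, I would use Lemma~\ref{local} to express $\mathcal{L}_{x,i}$, conditionally on $\mathcal{F}_{m,i-1}$, as a Poisson point process on $W^{[0,\infty)}$ and then use PPP thinning/superposition to index children by the ancestor $\eta \in \mathcal{L}_{x,i-1}$ they first hit. For each parent $\eta$, the branching offspring law $\overline{\mathcal{X}}^{u_n^+,\rho_n}[\mathrm{range}(\eta);\emptyset;\widehat\eta]$ differs from the actual one in three respects: (a) the actual law forbids hitting $J_{x,i-1}$; (b) the actual law requires the entering point to lie in $\overline{\eta} \subset \widehat\eta$; and (c) a child may not enter a second parent's proper part. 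For (a), after the first hit of $\widehat\eta$, Lemma~\ref{lem:forget} lets me couple the continuation with an unconditioned walk up to total-variation error tending to $0$; the hitting probability of $J_{x,i-1}$ for such a walk is then small since $J_{x,i-1}$ is a controlled union of typical trajectories whose $\rho_n$-capacity (bounded via \eqref{eq:rho_bound4}) together with the distance decay \eqref{eq:hitting} yields an $o(1)$ contribution. For (b), the goodness estimate $\mathrm{cap}^{(\rho_n)}(\widehat\eta \setminus \overline\eta) < 4\widetilde{C}_n/I_n^{1/2}$ directly bounds the offspring intensity at the forbidden entering region. For (c), the goodness bound $\varphi^{(\rho_n)}(\overline{\eta_1},\overline{\eta_2}) < \widetilde{C}_n/I_n$ bounds the intensity of children entering two ancestors.

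The main obstacle I anticipate is keeping the total discrepancy small after summing the per-parent, per-pair error bounds over $\mathcal{L}_{x,i-1}$ and over all $\alpha$ layers. Each per-parent error in (a), (b), (c) is of order $I_n^{-1/2}$ relative to the base intensity; summing over at most $\beta k_1^{i-1}$ parents and the corresponding pairs, then over $i=1,\dots,\alpha$, the expected total number of bad trajectories is bounded by a constant depending on $\alpha,\beta,k_1$ times $I_n^{-1/2}$, which tends to $0$ as $n \to \infty$. A Markov's inequality, combined with the $r/2$ cost already paid for $\mathrm{Good}(x)$, delivers $\mathbb{Q}[\mathcal{L}_{x,i}=Y_i,\, i=1,\dots,\alpha] > 1-r$ for all large $n$. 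The sharpening to any $\theta_1$ below a threshold $C_4^*$ comes from choosing $\theta_1$ small enough to make the forgetting estimate in Lemma~\ref{lem:forget} sharp in $d=4$, which is what enters the bound in (a).
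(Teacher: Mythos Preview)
Your proposal is essentially the paper's: both couple layer by layer, matching $\mathcal{L}_{x,i}$ to the union $\mathcal{K}_{x,i}$ of independent PPPs indexed by $\eta\in\mathcal{L}_{x,i-1}$ (this is Lemma~\ref{recursive_coupling1}) and controlling the discrepancy via the goodness bounds, with the paper organizing the iteration through the combined event $\mathrm{Good}^*(x,i)=\mathrm{Good}(x,i)\cap\{\mathcal{L}_{x,j}=Y_j,\,j\le i\}$ (Proposition~\ref{good*}) rather than first securing the full $\mathrm{Good}(x)$. One minor correction: Lemma~\ref{lem:forget} is not what handles your item (a)---the exclusion of $J_{x,i-1}$ is bounded directly at the PPP-intensity level by $u_n^+\varphi^{(\rho_n)}(\overline\eta_j,V\cap B(x,\gamma R_n))\lesssim\widetilde C_n L_n^{2-d}=o(1)$, using that the proper part stays at distance $>L_n$ from $V$.
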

\begin{proposition}\label{produce_seed}
    For all $r>0$, there exists $C_5=C_5(r)$ such that the following holds. For all $\beta>C_5$ and $k_1>C_3(\beta,r/2)$ in Proposition \ref{good_exploration}, there is $C_6=C_6(\beta,r)$ such that for $\theta_1<C_6$,
    \begin{equation}
        \mathbb{P}[{\rm Seed}(\aleph_m^1)|\mathcal{F}_{m,0}]>1-r
    \end{equation}
    whenever $\aleph_m^1\neq \Delta$ for large $n$.
\end{proposition}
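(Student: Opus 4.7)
The plan is to first couple the exploration $(\mathcal{L}_{x,i})_{1\le i\le\alpha}$ with the branching process $Y\sim P_Y^{\mathcal S_x}$ and enforce goodness via Propositions \ref{good_exploration} and \ref{coupling1}; then, using the supercriticality of $Y$ coming from Lemma \ref{M_apriori} combined with a directional Donsker-type estimate, show that $Y_\alpha$ contains at least $\beta$ typical trajectories whose ranges are confined to each neighboring box $B_{x+R_n e}^n$; and finally verify the pairwise $\varphi^{(\rho_n)}$ constraint required of a seed using the goodness condition.

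I would apply Proposition \ref{good_exploration} with parameter $r/2$ and Proposition \ref{coupling1} with parameter $r/4$. Once $\beta$ is fixed (to be determined below), choose $k_1>C_3(\beta,r/2)$ and then $\theta_1$ small enough to satisfy both $\theta_1<C_4(\beta,k_1,r/2)$ and the analogous constraint from Proposition \ref{coupling1}. This produces an event $\mathcal G$ of conditional probability $\ge 1-3r/4$ on which the exploration at $x$ is good and $\mathcal L_{x,i}=Y_i$ for all $1\le i\le\alpha$. It therefore suffices to show, conditionally on $\mathcal G$, that $Y_\alpha$ contains a seed at each of the $2d$ neighbors of $x$ with probability $\ge 1-r/4$.

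For a fixed direction $e\in\mathcal U_d\cup(-\mathcal U_d)$ and $y=x+R_n e$, Lemma \ref{local} together with (\ref{eq:equi_lower}) yields a per-generation multiplier for $Y$ of at least $(1+\epsilon)(1-\epsilon/4)>1+\epsilon/2$, so $Y$ is uniformly supercritical. A Donsker-type argument, analogous to Lemma \ref{lemma2.1} but asking for confinement to a specified box rather than a centered ball, furnishes a constant $p_0=p_0(k,K,d)>0$ uniform in $n$ such that a typical offspring of length $T\in[kR_n^2,KR_n^2]$ starting at an interior point of $B_z^n$ stays inside $B_z^n$ with probability $\ge p_0$. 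Since trajectories in $\mathcal S_x\subset B_x^n$ extend up to distance $MR_n$, a positive fraction of first-generation descendants have $*$-proper parts deep inside $B_y^n$, and iterating this together with supercritical reproduction over $\alpha\ge 2$ generations yields an expected count of typical descendants of $Y$ with range contained in $B_y^n$ of order $\beta\cdot c(1+\epsilon/2)^{\alpha-2}p_0^{2}$ for some $c>0$. Provided the parameter $\alpha$ from (\ref{eq:intro_alpha}) is fixed large enough that this mean substantially exceeds $\beta$, a second-moment concentration argument leveraging the $\beta$ independent founding lineages delivers at least $\beta$ confined descendants per direction with probability $\ge 1-r/(8d)$; a union bound over the $2d$ directions gives the claimed $1-r/4$.

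To close, on $\mathcal G$ the goodness condition in Definition \ref{def:good_exploration} gives $\varphi(\overline\eta_1^{(\alpha,x)},\overline\eta_2^{(\alpha,x)})<\widetilde C_n/I_n$ for any two trajectories in $\mathcal L_{x,\alpha}$. To upgrade to the $\varphi^{(\rho_n)}$ estimate required by the seed definition, I would invoke (\ref{eq:rho_bound4}) applied to typical trajectories: the excess ${\rm cap}^{(\rho_n)}(\overline\eta)-{\rm cap}(\overline\eta)$ is at most of order $R_n^2\log^{-10}\mu_1(\rho_n)$, and combining this with a standard Green's function bound yields $\varphi^{(\rho_n)}-\varphi\ll\widetilde C_n/I_n$ for $n$ large. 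The hardest step will be the restricted branching analysis in the third paragraph: propagating the supercritical growth of $Y$ across $\alpha$ generations while tracking only those descendants confined to $B_y^n$, and establishing uniform lower bounds on the directional probability $p_0$ across all random intermediate configurations. The role of $\beta$ being large is precisely to make the second-moment (or Kesten--Stigum-type) concentration robust enough to overcome the potentially small per-lineage success probability, which is why $C_5(r)$ must be taken sufficiently large.
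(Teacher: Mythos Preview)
Your overall architecture matches the paper's: reduce to the branching process $Y$ via Propositions \ref{good_exploration} and \ref{coupling1}, show $Y_\alpha$ contains $\beta$ confined trajectories in each neighboring box, and absorb the $\varphi^{(\rho_n)}$ requirement into the goodness event. The second-moment concentration using the $\beta$ independent founding lineages is also the right closing move and is exactly what the paper does.

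The gap is in the spatial tracking, and it is not cosmetic. Your claimed expected count $\beta\cdot c(1+\epsilon/2)^{\alpha-2}p_0^{2}$ cannot be produced by the argument you sketch. If after one step you plant $c\beta$ descendants with proper parts in $B_y^n$ and then let them reproduce freely for $\alpha-2$ generations, the resulting population is spread over a ball of radius $\sim(\alpha-2)MR_n$; you have no control forcing a positive, $\alpha$-independent fraction of the $(\alpha-1)$-th layer to sit inside $B_y^n$, so the final confinement factor $p_0$ cannot be applied to the whole population. If instead you insist on confinement at every intermediate step, the per-generation multiplier drops to $(1+\epsilon/2)p_0$, which need not exceed $1$ since $p_0=f(K)$ is just some fixed constant in $(0,1)$. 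Either way the displayed formula is unjustified, and no ``Donsker-type'' statement about a single trajectory bridges this gap: what is needed is quantitative control on where the hitting point of a lineage sits after $\alpha-1$ generations.

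The paper supplies exactly this missing ingredient. It introduces the sequence of hitting points $(K_i)_{i\ge0}$ along a lineage and proves an LCLT-type lower bound (Proposition \ref{lclt}): $\overline Q^\eta[K_{\alpha-1}\in\widetilde B_{R_n\cdot e}]\ge C_{13}\alpha^{-d/2}$, uniformly over seeds $\eta\subset B_x^n$. This is obtained by two successive couplings (Lemmas \ref{branch_coupling2} and \ref{branch_coupling3}) that replace the conditioned, equilibrium-weighted step law of $K_{i+1}-K_i$ first by its unconditioned version and then by a point on a free simple random walk, so that the ordinary LCLT applies. Only then does a single confinement factor (the $\tfrac12 f(256K)$ in \eqref{5.10}) enter, and the condition on $\alpha$ becomes the manifestly satisfiable \eqref{eq:alpha_condition}: $(1+\epsilon/8)^\alpha\alpha^{-d/2}$ must beat a fixed constant. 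Your sketch would be repaired by inserting this LCLT step; without it, the first-moment lower bound does not follow.
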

Clearly, Proposition \ref{uniform_low} follows from combining Propositions \ref{good_exploration} and \ref{produce_seed}.
\begin{proof}[Proof of Proposition \ref{uniform_low} assuming Propositions \ref{good_exploration} and \ref{produce_seed}]
    We fix
    \begin{equation}\label{eq:fix_betatheta2k1}
        \beta>C_5(\epsilon,q/2),\ k_1>C_3(\beta,q/2)\mbox{ and }\theta_1<C_6(\beta,q/2)\wedge C_4(\beta,k_1,q/2).
    \end{equation}
    Thus, by Proposition \ref{good_exploration},
    $$\mathbb{P}[{\rm Good}(\aleph_m^1)|\mathcal{F}_{m,0}]>1-q/2;$$
    by Proposition \ref{produce_seed},
    $$ \mathbb{P}[\text{Seed}(\aleph_m^1)|\mathcal{F}_{m,0}]>1-q/2.$$
    The claim \eqref{eq:uniform_low} then follows by noting that ${\rm Fail}_m={\rm Good}(\aleph_m^1)^c\cup\text{Seed}(\aleph_m^1)^c$. 
\end{proof}

The remaining part of this section is organized as below. In Subsection 5.1, we prove Proposition \ref{good_exploration} by induction, and show that techniques involved in the proof lead to Proposition \ref{coupling1}. In Subsection 5.2, we prove Proposition \ref{produce_seed} by proving a similar result for the branching process $(Y)_{i\ge0}$ using the LCLT and then applying Proposition \ref{coupling1}.

\subsection{On the goodness of exploration} In this subsection, we prove Proposition \ref{good_exploration} by iteration: we define $${\rm Good}(x,i)\coloneq\{\mbox{the exploration at }x\mbox{ is good until layer }i\},$$$i=0,1,...,\alpha$, and compute $\mathbb{P}[{\rm Good}(x,i+1)|{\rm Good}(x,i)]$ for $i\le\alpha-1$. To facilitate the computation, we condition on $\mathcal{F}_{m,i}$ and try to couple the conditional law of $\mathcal{L}_{x,i+1}$ with that of the union of independent point processes with distributions $\overline{\mathcal X}^{u_n^+,\rho_n}[{\rm range}(\eta)]$ for all $\eta\in\mathcal L_{x,i}$. Therefore, we get as a byproduct a coupling between $(\mathcal{L}_{x,i})_{1\leq i\leq\alpha}$ conditioned on $\mathcal{F}_{m,0}$ and a branching process with agents in the space $W^{[0,\infty)}$, which will also be crucial for the proof of Proposition \ref{produce_seed}.

We reduce the proof of Proposition \ref{good_exploration} to the iteration of the following result.
\begin{proposition}\label{recursive_good_exploration}
    Arbitrarily fix $\beta$ in (\ref{eq:intro_beta}), then for all $r>0$, then there exists $C_7(\beta,r)$, such that for all $k_1>C_7$, there is some $C_8=C_8(\beta,k_1,r)$ satisfying for all $\theta_1<C_8$ and all large $n$, on the event ${\rm Good}(\aleph_m^1,i)$,
    $$\mathbb{P}[{\rm Good}(\aleph_m^1,i+1)|\mathcal F_{m,i}]>1-r.$$
\end{proposition}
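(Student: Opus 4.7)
The plan is to work conditionally on $\mathcal{F}_{m,i}$ and exploit the Poisson structure of the next layer $\mathcal{L}_{\aleph_m^1,i+1}$ (write $x=\aleph_m^1$). By Lemma~\ref{local} applied to the typical sub-process $\overline{\mathcal X}^{u_n^+,\rho_n}$, the conditional law of $\mathcal{L}_{x,i+1}$ given $\mathcal{F}_{m,i}$ is that of a Poisson point process on $W^{[0,\infty)}$ whose atoms are typical trajectories of length in $[kR_n^2,KR_n^2]$ that hit $L_{x,i}$ in $L_{x,i}'=\bigcup_{\zeta\in\mathcal{L}_{x,i}}\widehat\zeta$ while avoiding $J_{x,i-1}$ before the hitting time. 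Consequently, the ``hits in the $*$-proper part'' clause of Definition~\ref{def:good_exploration} is built into the algorithm, and the task reduces to verifying, with probability $>1-r$, the offspring cap $N_\zeta\le k_1$, the single-parent property, the improper-part capacity bound, and the pairwise $\varphi$ bound among new trajectories.

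For the offspring cap, for each $\zeta\in\mathcal{L}_{x,i}$ the count $N_\zeta$ of new trajectories hitting $\zeta$ is Poisson with mean at most $u_n^+\cdot e_\zeta^{(\rho_n)}(\widehat\zeta)$; typicality $\mathcal{E}_2$ of $\zeta$ combined with (\ref{eq:rho_bound4}) and the definition of $u_n^+$ bounds this mean by a dimensional constant $C_\ast$, so a Chernoff estimate gives $\mathbb{P}[N_\zeta>k_1\mid\mathcal{F}_{m,i}]\le (eC_\ast/k_1)^{k_1}$, which is summable over the at most $\beta k_1^i\le \beta k_1^{\alpha-1}$ old trajectories once $k_1$ is large. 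The single-parent property is reduced to bounding the expected number of new trajectories hitting two distinct $\zeta_1,\zeta_2\in\mathcal{L}_{x,i}$ by $u_n^+\varphi^{(\rho_n)}(\widehat\zeta_1,\widehat\zeta_2)$; writing $\widehat\zeta_j=\overline\zeta_j^i\cup(\widehat\zeta_j\setminus\overline\zeta_j^i)$ and using the inductive hypothesis $\text{Good}(x,i)$ (which yields $\varphi^{(\rho_n)}(\overline\zeta_1^i,\overline\zeta_2^i)<\widetilde C_n/I_n$ and $\text{cap}^{(\rho_n)}(\widehat\zeta_j\setminus\overline\zeta_j^i)<4\widetilde C_n/I_n^{1/2}$) together with the Green-function bound (\ref{eq:green's_estimate}) renders the expected number $O(I_n^{-1/2})$; Markov plus a union bound over the $O(\beta^2 k_1^{2\alpha})$ pairs handles this.

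The heart of the proof is the first bullet of Definition~\ref{def:good_exploration} for the new layer. The improper part $\widehat\eta\setminus\overline\eta^{i+1}$ of $\eta\in\mathcal{L}_{x,i+1}$ splits into a time window of length $2R_n^2/I_n$ around the hitting point and the spatial slice $\widehat\eta\cap B(J_{x,i},L_n)$. The former is handled by applying $\mathcal{E}_4$ to the $T_n'$-sub-paths it overlaps, yielding a capacity much smaller than $\widetilde C_n/I_n^{1/2}$. For the latter I would combine the typicality $\mathcal{E}_3$ (which controls the $L_n$-sausage volume of $\eta$ by $R_n^2 L_n^{d-2} I_n^{1/3}$) with the inductive fact that $J_{x,i}$ is the union of $O(\beta k_1^{\alpha})$ typical trajectories of diameter at most $MR_n$, converting the resulting volume bound into a $\rho$-capacity bound via (\ref{eq:rho_bound4}). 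The pairwise $\varphi^{(\rho_n)}(\overline\eta_1^{i+1},\overline\eta_2^{i+1})$ bound between two new trajectories follows from a similar intersection computation using the LCLT together with the separation of their starting points in $L_{x,i}'$ inherited from the $\varphi$-control on pairs in $\mathcal{L}_{x,i}$.

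The main obstacle will be the $B(J_{x,i},L_n)$ contribution, since it requires a bound uniform in every $\mathcal{F}_{m,i}$-configuration and marries the microscopic $L_n$-sausage of $\eta$ with the global geometry of the previously explored set. The cleanest way I see is to propagate an inductive description of $J_{x,i}$ as a uniformly controlled union of typical trajectories, then apply a Green-function/hitting-probability estimate reminiscent of (\ref{eq:hitting}) on each piece. Once this estimate is secured, the proof concludes by first choosing $k_1>C_7(\beta,r)$ large enough to absorb the Poisson tails in the offspring count, and then $\theta_1<C_8$ small enough that the $\theta_1$-corrections coming from $\mathcal{E}_2$ and $\mathcal{E}_4$ are negligible on the $O(I_n^{-1/2})$ scale of all the target bounds.
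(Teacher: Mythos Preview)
Your overall decomposition into offspring cap, single-parent property, improper-part capacity, and pairwise $\varphi$ matches the paper's events $E_1$--$E_5$, and your treatment of $E_1$ and $E_3$ is essentially correct. However, there are two concrete gaps.

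First, a small but real slip: the algorithm only forces new trajectories to hit $L_{x,i}'=\bigcup_\zeta\widehat\zeta$, the $*$-proper part, whereas Definition~\ref{def:good_exploration} requires the hit to land in the smaller proper part $\overline\zeta^{i}$. So that clause is \emph{not} built in; it is the event $E_2$ in the paper and must be controlled separately by a $u_n^+\,{\rm cap}^{(\rho_n)}(\widehat\zeta\setminus\overline\zeta)$ intensity computation.

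Second, and more seriously, your handling of $E_4$ (improper-part capacity of the spatial slice $\widehat\eta\cap B(J_{x,i},L_n)$) and $E_5$ (pairwise $\varphi$) skips the main analytical difficulty. The law of each $\eta\in\mathcal L_{x,i+1}$ conditional on $\mathcal F_{m,i}$ is $\mu(x,m,l;{\rm range}(\zeta))$ restricted to $\mathscr T_n$, i.e.\ a walk conditioned on its backward piece avoiding the parent. You cannot apply LCLT or Green-function estimates to this conditioned law directly; the paper resolves this by further conditioning on the hitting point and length (the $\sigma$-field $\mathcal G$) and then invoking Lemma~\ref{lem:forget} to couple the forward and backward pieces of $\eta$ with two \emph{free} simple random walks, at a cost $\le 2\theta_1$ when $d=4$ (this is exactly where the smallness of $\theta_1<C_8$ is consumed, not in $\mathcal E_2$ or $\mathcal E_4$). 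Only after this decoupling can one feed in the dedicated random-walk estimates Lemma~\ref{le:intersect_cap} (expected capacity/volume of $X[T,KR_n^2]\cap A$ for a free walk) and Lemma~\ref{le:free_varphi} (expected $\varphi^{(\rho_n)}$ between two free walks, uniform in starting points). Your plan to use $\mathcal E_3$ for the spatial slice is also misplaced: $\mathcal E_3$ bounds $|B(\eta,L_n)|$ for the \emph{new} trajectory, but what the paper actually needs and uses is $\mathcal E_3$ for the \emph{old} trajectories comprising $J_{x,i}$, in order to bound $|B(V',2L_n)|$ by $CR_n^2L_n^{d-2}I_n^{1/3}$ and then plug this into Lemma~\ref{le:intersect_cap}. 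A volume bound on $B(J_{x,i},L_n)$ alone, followed by the crude $|\eta\cap B(J_{x,i},L_n)|\le |B(J_{x,i},L_n)|$ and (\ref{eq:rho_bound4}), is far too weak (off by a factor of $R_n^{2}$). Likewise, your pairwise-$\varphi$ argument does not follow from ``separation of starting points'': two offspring of the \emph{same} parent can start arbitrarily close, and the bound in Lemma~\ref{le:free_varphi} is uniform in $y,z$. In short, the missing idea is the decoupling via Lemma~\ref{lem:forget} (preceded by the simplification in Lemma~\ref{recursive_coupling1}) that turns conditioned trajectories into free random walks before any LCLT input.
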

\begin{proof}[Proof of Proposition \ref{good_exploration} assuming Proposition \ref{recursive_good_exploration}.]
    Take $r'=r'(r)>0$ small so that $(1-r')^\alpha>1-r$, and let $k_1>C_7(\beta,r')$ and $\theta_1< C_8(\beta,k_1,r')$.
    By iterating the inequality in Proposition \ref{recursive_good_exploration}, we have
    $$\mathbb{P}[{\rm Good}(\aleph_m^1)|\mathcal{F}_{m,0}]>(1-r')^\alpha>1-r.$$
    Thus, we conclude the proof by taking $C_3(\beta,r)=C_7(\beta,r')$ and $C_4(\beta,k_1,r)=C_8(\beta,k_1,r')$.
\end{proof}

We gather an ingredient for the proof of Proposition \ref{recursive_good_exploration}. In order to compute the conditional probability of ${\rm Good}(\aleph_m^1,i+1)$, we first simplify the conditional law of the $(i+1)$'th layer. We write $\mathcal L_{\aleph_m^1,i}=\{\eta_1,...,\eta_s\}$. As mentioned at the beginning of this subsection, we construct independent point processes with distributions $\overline{\mathcal X}^{u_n^+,\rho_n}[{\rm range}(\eta_j);\emptyset;\hat\eta_j],j=1,...,s$ and couple $\mathcal{L}_{x,i+1}$ with their union. Formally, let $(\mathcal{J}_{x,i,l})_{x\in \mathbb L_n,1\leq i\leq\alpha,l\in\mathbb{N}}$ be a collection of i.i.d.\ PPPs that are independent of $\mathcal{X}^{u_n^+,\rho_n}$ but share the same distribution, and let $\overline{\mathcal J}_{x,i,l}$ denote the restriction of $\mathcal J_{x,i,l}$ to $\mathscr T_n$. Then, we have the following lemma.
\begin{lemma}\label{recursive_coupling1}
    Arbitrarily fix $\beta,k_1$ and $\theta_1$. Conditioning on $\mathcal{F}_{m,i}$ and with restriction on ${\rm Good}(\aleph_m^1,i)$, write $x=\aleph_m^1$, $\mathcal{L}_{x,i}=\{\eta_0,\eta_1,...,\eta_s\}$, and let $$\mathcal{K}_{x,i+1}=\bigcup_{j=0}^s\overline{\mathcal{J}}_{x,i+1,j}[{\rm range}(\eta_j);\emptyset;\widehat{\eta}_j].$$ Then for all $r>0$, there exists a coupling $\mathbb Q$ between $\mathcal{L}_{x,i+1}$ with law conditioned on $\mathcal{F}_{m,i}$ and $\mathcal{K}_{x,i+1}$ such that
    $$\mathbb Q[\mathcal L_{x,i+1}\neq\mathcal K_{x,i+1}]<r$$
    for large $n$ and all realizations of $\overline{\mathcal X}^{u_n^+,\rho_n}$ satisfying ${\rm Good}(\aleph_m^1,i)$.
\end{lemma}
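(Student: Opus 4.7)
The plan is to identify both $\mathcal{L}_{x,i+1}$ (conditionally on $\mathcal{F}_{m,i}$) and $\mathcal{K}_{x,i+1}$ as Poisson point processes on $W^{[0,\infty)}$ and then to couple them through a comparison of their intensity measures. Writing $x=\aleph_m^1$, a Poisson restriction argument applied to $\overline{\mathcal X}^{u_n^+,\rho_n}$ together with Lemma~\ref{local} (applied to the set $L_{x,i}\cup J_{x,i}$) shows that on the event ${\rm Good}(\aleph_m^1,i)$, the conditional law of $\mathcal{L}_{x,i+1}$ given $\mathcal{F}_{m,i}$ is the restriction to $\mathscr T_n$ of an explicit PPP with intensity $\nu_{\mathcal L}$ concentrated on trajectories first hitting $L_{x,i}\cup J_{x,i}$ at a site of $L'_{x,i}$. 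On the other hand, the superposition of the independent PPPs defining $\mathcal{K}_{x,i+1}$ yields a PPP with intensity $\nu_{\mathcal K}=u_n^+\sum_{j=0}^s\pi^*_{\eta_j}$, where by Lemma~\ref{local} each $\pi^*_{\eta_j}$ concentrates on typical trajectories first hitting ${\rm range}(\eta_j)$ at $\widehat\eta_j$. The standard maximal coupling of two Poisson processes then yields $\mathbb{Q}[\mathcal{L}_{x,i+1}\neq\mathcal{K}_{x,i+1}]\le\|\nu_{\mathcal L}-\nu_{\mathcal K}\|_{\rm TV}$.

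The discrepancy $\nu_{\mathcal L}\triangle\nu_{\mathcal K}$ has two sources: (a) a trajectory that makes first visits inside $\widehat\eta_{j_1}$ and $\widehat\eta_{j_2}$ for two distinct indices $j_1,j_2$ is over-counted in $\nu_{\mathcal K}$; and (b) a trajectory in $\nu_{\mathcal K}$ that additionally intersects the already-explored set $J_{x,i}$ is excluded from $\nu_{\mathcal L}$. Using the standard bound that FRI trajectories hitting both $A$ and $B$ have expected number at most $u_n^+\varphi^{(\rho_n)}(A,B)$ (up to a universal factor, via \eqref{eq:hit_decompose}--\eqref{eq:hitting}), the total masses of (a) and (b) are respectively dominated by
\[
u_n^+\sum_{j_1\neq j_2}\varphi^{(\rho_n)}(\widehat\eta_{j_1},\widehat\eta_{j_2})\quad\text{and}\quad u_n^+\sum_{j=0}^s\varphi^{(\rho_n)}(\widehat\eta_j,J_{x,i}).
\]
Decomposing $\widehat\eta_j=\overline\eta_j^{(i,x)}\cup(\widehat\eta_j\setminus\overline\eta_j^{(i,x)})$ and using sub-additivity of $\varphi^{(\rho_n)}$ in each argument, the goodness assumption directly provides $\varphi^{(\rho_n)}(\overline\eta_{j_1}^{(i,x)},\overline\eta_{j_2}^{(i,x)})<\widetilde C_n/I_n$ and ${\rm cap}^{(\rho_n)}(\widehat\eta_j\setminus\overline\eta_j^{(i,x)})<4\widetilde C_n/I_n^{1/2}$; the inclusion $\overline\eta_j^{(i,x)}\subset B(J_{x,i},L_n)^c$ built into the definition of ${\rm pp}$, together with \eqref{eq:hitting}, controls the remaining cross-term between $\overline\eta_j^{(i,x)}$ and $J_{x,i}$ at a rate of order $L_n^{2-d}$. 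Combined with the a priori bound $|\mathcal L_{x,i}|\le\beta k_1^i\le\beta k_1^\alpha$ that follows from bullets~2--3 of Definition~\ref{def:good_exploration}, and the asymptotics $u_n^+\widetilde C_n=\Theta(1)$ with $I_n,L_n\to\infty$, the right-hand side can be made smaller than $r$ for all large $n$ provided $\theta_1$ is taken sufficiently small.

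The main obstacle is the first step: rigorously establishing that conditionally on $\mathcal{F}_{m,i}$ the law of $\mathcal{L}_{x,i+1}$ is a PPP with the claimed intensity. This is subtle because $\mathcal{F}_{m,i}$ is generated not by a fixed restriction of the master FRI but by a sequence of first-hit reveals with respect to random layers $L_{x,j}$, $j\le i$, as well as the initial set $\{z_n\}$. The cleanest approach is to view the ``revealed'' and ``unrevealed'' portions of $\overline{\mathcal X}^{u_n^+,\rho_n}$ as living on disjoint $\mathcal{F}_{m,i}$-measurable events in trajectory space, parameterized by the random data $(L_{x,i},J_{x,i})$: Poisson restriction then gives the unrevealed part as a PPP independent of $\mathcal{F}_{m,i}$, and a final application of Lemma~\ref{local} supplies the explicit intensity formula $\nu_{\mathcal L}$ after further restriction to the hitting condition that defines $\mathcal{L}_{x,i+1}$.
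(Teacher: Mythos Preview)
Your overall approach—identifying both sides as Poisson point processes and bounding the total variation of their intensity measures—matches the paper's strategy, and your identification of the two discrepancy sources (a) and (b) is correct. However, there is a genuine gap in your control of source (b).

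The set $J_{x,i}$ is the union of \emph{all} trajectories revealed in \emph{all} previous rounds of the Algorithm (recall that $J_{x',0}$ is initialized as $J_{x,\alpha}$ when passing from one box to the next), so its capacity can grow without bound as $m\to\infty$. Your a priori bound $|\mathcal L_{x,i}|\le\beta k_1^\alpha$ controls only the current layer, not $J_{x,i}$, so the estimate $\varphi^{(\rho_n)}(\overline\eta_j,J_{x,i})\lesssim {\rm cap}^{(\rho_n)}(\overline\eta_j)\cdot{\rm cap}^{(\rho_n)}(J_{x,i})\cdot L_n^{2-d}$ is not uniform in the realization. The paper closes this gap with two structural observations you are missing. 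First, since every trajectory under consideration is typical and hence has diameter at most $2MR_n$ (condition $\mathcal E_1$), and the seed $\mathcal S_x$ lies in $B_x^n$, any trajectory contributing to (b) must intersect $B(x,\gamma R_n)$; thus one may replace $J_{x,i}$ by $J_{x,i}\cap B(x,\gamma R_n)$. Second, since $x=\aleph_m^1$ is active (in particular not ruined), every previous round at any $y\in\{\aleph_0^1,\dots,\aleph_{m-1}^1\}\cap B(x,2\gamma R_n)$ must have been successful, so $|\mathcal L_y|\le\beta k_1^\alpha$ for each such $y$, and there are at most $(4\gamma+1)^d$ of them. This yields ${\rm cap}^{(\rho_n)}(J_{x,i}\cap B(x,\gamma R_n))\le C(\beta,\gamma,k_1)\,\widetilde C_n$, uniformly in $m$; this is precisely where the choice $\gamma=2\alpha M+2$ and the ruining mechanism in the Algorithm enter the proof.

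A minor point: your final sentence says the bound holds ``provided $\theta_1$ is taken sufficiently small,'' but the lemma is stated for \emph{arbitrary fixed} $\beta,k_1,\theta_1$. In the paper's proof (and in yours, once the gap above is repaired) the bound $C(sI_n^{-1/2}+\widetilde C_n L_n^{2-d})\to 0$ follows purely from $n\to\infty$ via $I_n,L_n\to\infty$, with no smallness condition on $\theta_1$.
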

\begin{proof}
    Recall that for $\eta_j\in\mathcal L_{x,i}$, $\overline{\eta_j}={\overline{\eta_j}^{(i,x)}}$ is the proper part of $\eta_j$. Recall the definition of $\mathcal{J}_{x,i}$ from Algorithm. Since we have arbitrarily fixed an $x$ and $i$, we write $V=V(\mathcal{J}_{x,i})$ throughout this proof.
    Note that conditionally on $\mathcal{F}_{m,i}$, the law of $\mathcal{L}_{x,i+1}$ is identical to that of an independent copy of $\overline{\mathcal X}^{u_n^+,\rho_n}[L_{x,i},V;L_{x,i}']$. Now we sample independent point processes $\mathcal T=\mathcal X^{u_n^+,\rho_n}$, and $$\mathcal{J}_j=_d\overline{\mathcal X}^{u_n^+,\rho_n}[{\rm range}(\eta_j);\emptyset;\widehat{\eta}_j]\backslash\overline{\mathcal X}^{u_n^+,\rho_n}[{\rm range}(\eta_j);V\cup\bigcup_{l\neq j}\overline{\eta}_l;\overline{\eta}_j],\ j=0,1,...,s.$$
    Recall that $\mathcal T(\eta)$ is the mass of $\mathcal T$  assigned to $\eta$. Define the restriction of $\mathcal T$ to $T_n$, $\overline{\mathcal T}$ by
    $$\overline{\mathcal T}\coloneq\sum_{\eta\in T_n}\mathcal T(\eta)\cdot\delta_\eta.$$
    Let $\mathcal{X}=\overline{\mathcal{T}}[L_{x,i};V;L_{x,i}']$ and
    $$\mathcal{X}_j=\overline{\mathcal{T}}[{\rm range}(\eta_j);V\cup\bigcup_{l\neq j}\overline{\eta_l};\overline{\eta}_j]\cup \mathcal{J}_j,j=0,1,...,s.$$
    Let $\mathcal{X}'=\sum_{j=0}^s\mathcal{X}_j$, and note that we have $\mathcal{X}\sim\nu$ and $\mathcal{X}'\sim\nu'$, where $\nu$ is the law of $\mathcal L_{x,i+1}$ given $\mathcal F_{m,i}$, and $\nu'$ is the law of $\mathcal K_{x,i+1}$. Therefore, it suffices to show that $\mathbb{P}[\mathcal{X}=\mathcal{X}']>1-r$ when $n$ is large. 
    
    We define the so called bad part of $\mathcal{X}$ by \begin{equation*}
        \begin{aligned}
            \mathcal{X}_{\text{bad}}:=\{&\eta\in\mathcal{X}:\eta\mbox{ first hits }L_{x,i}\mbox{ in the improper part of some trajectory,}\\&\mbox{or hits both }\overline{\eta}_j\mbox{ and }\overline{\eta}_l\mbox{ for some }j\neq l,0\leq j,l\leq s\}
        \end{aligned}
    \end{equation*}
    In fact, $\mathcal{J}_j,0\leq j\leq s$ and $\mathcal{X}_\text{bad}$ are all empty with probability tending to $1$ when $n$ tends to infinity, which directly leads to $\mathcal{X}=\mathcal{X}'$. Let $L_{x,i}''=\bigcup_{\eta\in\mathcal L_{x,i}}\hat\eta\backslash \overline\eta$. Since the exploration at $x$ is good until the $i$'th layer, $s\leq\beta k_1^i$, by computing the intensity of PPPs $\mathcal{X}_\text{bad}$ and $\mathcal{J}_j,0\leq j\leq s$, we get
    \begin{equation}\label{eq:intensity_control_bad}
        \begin{aligned}
            \mathbb{P}[\mathcal{X}_\text{bad}\neq\emptyset]\le\;&u_n^+{\rm cap}^{(\rho_n)}(L_{x,i}'')+\sum_{0\le j< l\le s}\mathbb P[\exists\zeta\in\mathcal T,\zeta\mbox{ hits both }\overline\eta_j\mbox{ and }\overline\eta_l]\\
            \leq\;&u_n^+{\rm cap}^{(\rho_n)}(L_{x,i}'')+\sum_{0\le j< l\le s}\left(\sum_{x\in\overline\eta_j}u_n^+e_{\overline\eta_j}^{(\rho_n)}(x)h(x,\overline\eta_l)+\sum_{x\in\overline\eta_l}u_n^+e_{\overline\eta_l}^{(\rho_n)}(x)h(x,\overline\eta_j)\right)\\
            \leq\;& u_n^+{\rm cap}^{(\rho_n)}(L_{x,i}'')+\sum_{j\neq l}u_n^+\varphi^{(\rho_n)}(\overline\eta_j,\overline\eta_l),
        \end{aligned}
    \end{equation}
    where we have used (\ref{eq:hit_decompose}) in the last line. Likewise, we can show
    \begin{equation}\label{eq:intensity_control_j}
        \sum_{j=0}^s\mathbb{P}[\mathcal{J}_j\neq\emptyset]\leq u_n^+\sum_{j=0}^s{\rm cap}^{(\rho_n)}(\widehat\eta_j\backslash\overline\eta_j)+\sum_{0\leq j,l\leq s}u_n^+\varphi^{(\rho_n)}(\overline\eta_j,\overline\eta_l)+u_n^+\sum_{j=0}^s\varphi^{(\rho_n)}(\overline\eta_j,V\cap B(x,\gamma R_n)).
    \end{equation}
    In the last term, we have $V\cap B(x,\gamma R_n)$ instead of $V$ for the following reason: by definition, the diameter of a typical trajectory is always bounded by $2MR_n$, so it is impossible for any trajectory in $\mathcal J_j$ to hit $B(x,\gamma R_n)^c$ by our choice of $\gamma$ in (\ref{eq:intro_gamma}). By the first item in Definition \ref{def:good_exploration},
    $${\rm cap}^{(\rho_n)}(L_{x,i}'')\leq\sum_{j=0}^s{\rm cap}^{(\rho_n)}(\widehat\eta_j\backslash\overline\eta_j)\leq(s+1)\cdot 4\widetilde C_n/ I_n^{1/2};$$
    and for all $0\leq j<l\leq s$,
    $$\varphi^{(\rho_n)}(\overline\eta_j,\overline\eta_l)\leq\widetilde C_n/I_n^{1/2}$$
    Moreover, we claim that
    \begin{equation}\label{eq:phi_far}
        \varphi^{(\rho_n)}(\overline\eta_j,V\cap B(x,\gamma R_n))\leq C(\widetilde C_n)^2 L_n^{2-d}.
    \end{equation} 
    Finally, noting that by (\ref{eq:def_un}) and (\ref{eq:upm}),
    \begin{equation}\label{eq:u_n+order}
        u_n^+\lesssim\frac{(1+\log\mu_1(\rho_n)\mathbbm1_{d=4})\mu_1(\rho_n)}{\mu_2(\rho_n)}\le\frac{1+\log\mu_1(\rho_n)\mathbbm1_{d=4}}{\mu_1(\rho_n)}\lesssim(\widetilde C_n)^{-1},
    \end{equation}
    where $\widetilde C_n$ was defined in (\ref{eq:deftildeC}), we have
    \begin{equation}\label{5.3}\begin{aligned}
        \mathbb P[\mathcal X\neq\mathcal X']\le\mathbb{P}[\mathcal{X}_{\text{bad}}\neq\emptyset]+\sum_{j=0}^s\mathbb{P}[\mathcal{J}_j\neq\emptyset]\le C(sI_n^{-1/2}+\widetilde C_nL_n^{2-d})
    \end{aligned}\end{equation}
    which tends to $0$ uniformly in the realization of $\overline{\mathcal X}^{u_n^+,\rho_n}$.
    This concludes the proof of the lemma.
    
   It remains to prove (\ref{eq:phi_far}). Note that the vertices comprising $V\cap B(x,\gamma R_n)$ can only come from trajectories in $\mathcal L_y$ where $y\in B(x,2\gamma R_n)\cap\mathbb L_n$. Since $x=\aleph_m^1$ has not been ruined until the end of the $(m-1)$-th round of exploration, the round of exploration at any $y\in\{\aleph_0^1,...,\aleph_{m-1}^1\}\cap B(x,2\gamma R_n)$ must have succeeded, so $$|\mathcal L_y|\le\beta(1+k_1+...+k_1^{\alpha-1})\le\beta k_1^\alpha.$$ Thus, the vertices comprising $V\cap B(x,\gamma R_n)$ come from at most $$|B(x,2\gamma R_n)\cap\mathbb L_n|\cdot\beta k_1^{\alpha+1}=(4\gamma+1)^d\beta k_1^\alpha$$ typical trajectories, so ${\rm cap}^{(\rho_n)}(V\cap B(x,2\gamma R_n))\leq C\cdot\widetilde C_n$, where $C=C(\beta,\gamma,k_1)$ and $\widetilde C_n$ defined in (\ref{eq:deftildeC}) stands for the order of the capacity of a typical random walk path of length $(R_n)^2$. In addition, ${\rm cap}^{(\rho_n)}(\overline\eta_j)\leq C\cdot\widetilde{C}_n$, and $d(\overline\eta_j,V)>L_n$ by the definition of $\overline\eta_j$, so we have
    \begin{equation}
        \varphi^{(\rho_n)}(\overline\eta_j,V)\leq{\rm cap}^{(\rho_n)}(\overline\eta_j){\rm cap}^{(\rho_n)}(V\cap B(x,2\gamma R_n))\cdot CL_n^{2-d}\leq C\cdot(\widetilde C_n)^2L_n^{2-d}.
    \end{equation}
This finishes the proof.
\end{proof}

Now we are in a good position to prove Proposition \ref{recursive_good_exploration}.
\begin{proof}[Proof of Proposition \ref{recursive_good_exploration}.]
    By Lemma \ref{recursive_coupling1}, it suffices to prove
    \begin{equation}\label{eq:good'}\mathbb{P}[{\rm Good}'(\aleph_m^1,i+1)|\mathcal{F}_{m,i}]>1-r/2,
    \end{equation}
    where $${\rm Good}'(\aleph_m^1,i+1)\coloneq\{(\mathcal L_{\aleph_m^1,1},...,\mathcal{L}_{\aleph_m^1,i},\mathcal{K}_{\aleph_m^1,i+1})\mbox{ is good w.r.t. }\mathcal{P}_m\}.$$ Recall the three restrictive items for goodness of exploration in Definition \ref{def:good_exploration}. We are going to verify them respectively.
    
    First, we deal with the third restriction that for all $\eta\in\mathcal L_{x,i}$, the number of trajectories in $\mathcal L_{x,i+1}$ intersecting $\hat\eta$ is no larger than $k_1$. We need to control the probability that $\overline{\mathcal J}_{x.i+1,j}[{\rm range}(\eta_j);\emptyset;\widehat\eta_j]$, a PPP with intensity at most $u_n^+{\rm cap}^{(\rho_n)}(\eta_j)$, contains more than $k_1$ trajectories. 
    Note that for $N\sim{\rm Poi}(u_n^+K\widetilde C_n)$, there is some $C=C(K)<\infty$, such that for all large $k_1$, 
    \begin{equation}\label{5.4}
        \mathbb{P}[N>k_1]<C2^{-k_1},
    \end{equation}
    because the intensity parameter $u_n^+K \widetilde C_n\lesssim K$ by (\ref{eq:u_n+order}). Henceforth, we always assume that $k_1$ satisfies (\ref{5.4}). Let
    $$E_1=\{\exists1\leq j\leq s\mbox{, s.t. }|\mathcal J_{x.i+1,j}[{\rm range}(\eta_j);\emptyset;\widehat\eta_j]|>k_1\},$$ then by (\ref{5.4}), we can take a large  constant $C_7(\beta,r)$ such that when $k_1>C_7$,
    \begin{equation}\label{eq:e1}
        \mathbb{P}[E_1|\mathcal F_{m,i}]\leq s\cdot C2^{-k_1}\leq\beta k_1^\alpha 2^{-k_1}<r/4.
    \end{equation}
    
    Next, we verify the second restriction that each $\zeta\in\mathcal L_{x,i+1}$ hits $\hat\eta$ for exactly one $\eta\in \mathcal L_{x,i}$ and first hits $\hat\eta$ in $\overline\eta$. We control the intersection events defined below. Define
    $$E_2=\{\exists 1\leq j\leq s,\zeta\in \mathcal J_{x.i+1,j}[{\rm range}(\eta_j);\emptyset;\widehat\eta_j]\mbox{ hits }\eta_j\mbox{ in }\widehat\eta_j\backslash\overline\eta_j\},$$
    $$E_3=\{\exists \zeta\in\mathcal K_{x,i+1}\mbox{ that hits }\overline\eta_j\mbox{ and }\overline\eta_l\mbox{, where }1\leq j<l\leq s\},$$
    Then, by the proof of Lemma \ref{recursive_coupling1}, for large $n$,
    \begin{equation}\label{eq:e234}
        \mathbb{P}[E_2\cup E_3|\mathcal F_{m,i}]<r/4.
    \end{equation}

    Finally, we deal with the first restriction which involves estimates on the $\rho_n$-capacity and $\varphi^{(\rho_n)}$ quantities (defined in (\ref{eq:cap^rho}) and (\ref{eq:defvarphi}) respectively) related to the new generation $\mathcal L_{x,i+1}$. For $\zeta\in\mathcal K_{\aleph_m^1,i+1}$, write $\overline\zeta={\rm pp}(\zeta;L_{m,i},V\cup L_{m,i})$, where $V$ is the same as in the proof of Lemma \ref{recursive_coupling1}. Let
    $$E_4=\{\exists\zeta\in\mathcal K_{\aleph_m^1,i+1}, {\rm cap}^{(\rho_n)}(\widehat\zeta\backslash\overline\zeta)>4\widetilde C_n/I_n^{1/2}\},$$
    $$E_5=\{\exists \zeta_1,\zeta_2\in\mathcal{K}_{\aleph_m^1,i+1}\mbox{, s.t. }\varphi^{(\rho_n)}(\overline\zeta_1,\overline\zeta_2)>\widetilde C_n/I_n\},$$ and we aim to bound the conditional probability of $E_4\cup E_5$ from above. Instead of conditioning on $\mathcal F_{m,i}$ alone, this time we additionally condition on the number, lengths and hitting points of trajectories within $\mathcal K_{\aleph_m^1,i+1}$. More precisely, let $N_j=\bigl|\mathcal J_{x.i+1,j}[{\rm range}(\eta_j);\emptyset;\widehat\eta_j]\bigr|$, $j=1,...,s$, and define
    $$\mathcal{G}=\sigma\{N_j,T(\zeta),t(\zeta,\eta_j),\zeta(t(\zeta,\eta_j)),\forall 1\leq j\leq s,\zeta\in\mathcal J_{x.i+1,j}[{\rm range}(\eta_j);\emptyset;\widehat\eta_j]\}.$$ We claim that there exists $C_8(\beta,k_1,r)$ such that for all $\theta_1<C_8$, on ${\rm Good}(\aleph_m^1,i)\cap E_1^c$,
    \begin{equation}\label{eq:e56}
        \mathbb{P}[E_4\cup E_5|\mathcal F_{m,i},\mathcal G]<r/4.
    \end{equation}
    for all large $n$. The proof of (\ref{eq:e56}) relies on the law conditioned on $\mathcal F_{\aleph_m^1,i}$ and $\mathcal G$ of trajectories within $\mathcal K_{\aleph_m^1,i+1}$ deduced from Lemma \ref{local}, and also on Lemma \ref{lem:forget} that helps deal with the law of backward paths of these trajectories that involves conditioning. We will attend to the details shortly.
    Coming back to the proof, by definition,
    $${\rm Good}(\aleph_m^1,i+1)\supset {\rm Good}(\aleph_{m}^1,i)\backslash\bigcup_{j=1}^5 E_j.$$
    Assuming (\ref{eq:e56}), the conclusion then follows from (\ref{eq:e1}). (\ref{eq:e234}) and (\ref{eq:e56}).
\end{proof}

In order to show (\ref{eq:e56}), we need the following two auxiliary lemmas, whose proofs are technical and deferred to the appendix.
\begin{lemma}\label{le:intersect_cap}
     There exists some $C<\infty$ such that the following holds for all large $n$. For all $A\subset \mathbb Z^d$ finite and nonempty, $y\in\mathbb Z^d$, and positive integer $T<KR_n^2$,
        \begin{align*}
        E^y[{\rm cap}(X[T,KR_n^2]\cap A,R_n^2)]&\leq C|B(A,L_n)|T^{-1}\log^{-1}R_n,&\mbox{if $d=4$};\\
    E^y\left[\left|X[T,KR_n^2]\cap A\right|\right]&\le C|A|T^{1-d/2},&\mbox{if $d\ge 5$}. 
        \end{align*}
    \end{lemma}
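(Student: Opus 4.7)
I would treat the two dimensional regimes separately, both grounded in the local central limit theorem (LCLT).

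For $d \ge 5$, the bound is a direct first-moment computation: by linearity and the LCLT bound $p_t(y,x) \le Ct^{-d/2}$,
\begin{align*}
E^y\bigl[|X[T, KR_n^2] \cap A|\bigr] = \sum_{x \in A}\sum_{t=T}^{KR_n^2} p_t(y,x) \le C|A|\sum_{t \ge T}t^{-d/2} \le C|A|T^{1-d/2},
\end{align*}
using $d/2 > 1$ for the convergence of the tail sum. This step is essentially routine.

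For $d = 4$, the analogous first-moment bound only yields $C|A|/T$, which is short of the target by a factor of $\log R_n$. The plan is to recover this factor by exploiting the definition of the $R_n^2$-capacity and the two-walks intersection behavior in dimension four. Set $t_\ast := R_n^2 \log^{-10} R_n^2$ and $S := X[T, KR_n^2] \cap A$. Since $X[t, t+t_\ast] \cap A \subset S$ whenever $t + t_\ast \le KR_n^2$, one may bound (using a first-visit overcount that is harmless for upper bounds)
\begin{align*}
\sum_{x \in S} P^x_{X'}\bigl[\widetilde H_S(X') > t_\ast\bigr] \le \sum_{t=T}^{KR_n^2 - t_\ast} \mathbf{1}_{X_t \in A}\, P^{X_t}_{X'}\bigl[\widetilde H_{X[t, t+t_\ast] \cap A}(X') > t_\ast\bigr] + O(t_\ast).
\end{align*}
Taking $E^y$, applying the strong Markov property at $t$ and using independence of $X'$ rewrites the right-hand side as $\sum_{z}\sum_{t} p_t(y,z)\cdot P^{z,z}[X^1[1,t_\ast]\cap X^2[0,t_\ast]\cap A = \emptyset]$, with the $t$-summation producing a prefactor $C/T$ by the $d=4$ LCLT.

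The delicate point is extracting a $\log^{-1} R_n$ factor from the non-intersection probability, at the price of replacing $A$ by $B(A, L_n)$. I would restrict to the high-probability event that both local traces $X^1[0, t_\ast]$ and $X^2[0, t_\ast]$ are contained in $B(z, L_n)$, which fails with probability $\exp(-\log^{c/2} R_n)$ by Gaussian estimates, since $\sqrt{t_\ast} \ll L_n$. On this event, the potential intersection within $A$ must lie in $A \cap B(z, L_n)$, and any $z$ producing a nonzero contribution therefore satisfies $z \in B(A, L_n)$. The resulting two-walks non-intersection estimate in $d=4$ is then bounded by $C/\log R_n$, exactly as in the calculation of the event $E_2$ in the proof of Proposition \ref{trace_cap_truncated_new} (c.f.\ \cite[Theorem 3.5.1]{trove.nla.gov.au/work/15980712} and the displays (\ref{eq:E1})--(\ref{eq:E3})). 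Combining this $\log^{-1} R_n$ factor with the summation $\sum_{z \in B(A, L_n)}\sum_{t \ge T} p_t(y,z) \le C|B(A, L_n)|/T$ yields the claimed bound, and the $O(t_\ast)$ boundary term is absorbed since $t_\ast \lesssim R_n^2 \ll |B(A,L_n)|T^{-1}\log^{-1}R_n$ in the non-trivial regime.

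The main obstacle I anticipate is the clean bookkeeping for the $A$-vs-$B(A,L_n)$ exchange and making rigorous the $d = 4$ intersection bound once the intersection site is further restricted to a subset of $B(z, L_n)$; however, these are technical variations on the argument already executed in the proof of Proposition \ref{trace_cap_truncated_new}, so no new ideas should be required.
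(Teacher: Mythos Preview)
Your treatment of $d\ge5$ is correct and matches the paper's argument.

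For $d=4$, however, there is a genuine gap. The step that fails is the claim that
\[
P^{z,z}\bigl[X^1[1,t_\ast]\cap X^2[0,t_\ast]\cap A=\emptyset\bigr]\le \frac{C}{\log R_n}.
\]
This is the \emph{full-trace} non-intersection bound, but here you only ask $X^1$ to avoid the possibly much smaller set $X^2[0,t_\ast]\cap A$. When $A$ is sparse this probability need not be small at all: take $A=\{z\}$, so that $X^2[0,t_\ast]\cap A=\{z\}$ deterministically, and the event becomes $\{\widetilde H_z(X^1)>t_\ast\}$, whose probability is bounded below by the escape probability from a point, a positive constant in $d=4$. Restricting to the event that both walks stay in $B(z,L_n)$ does not help, because the obstruction is that $X^2\cap A$ can be tiny regardless of confinement. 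Nor does the enlargement $A\leadsto B(A,L_n)$ save the argument as you have organized it: the summation over $z$ already runs only over $z\in A$ because of the indicator $\mathbf 1_{X_t\in A}$, so writing $z\in B(A,L_n)$ gains nothing.

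The paper obtains the $\log^{-1}R_n$ factor by a different mechanism that does not require any non-intersection estimate. It chops $X[T,KR_n^2]$ into pieces $\xi_j$ of length $T_n'=\lfloor R_n^2\log^{-1}R_n\rfloor$ and, for each piece, distinguishes whether its starting point $x_j$ lies in $B(A,L_n)$ or not. If $x_j\notin B(A,L_n)$, the piece reaches $A$ only on a Gaussian-small event (since $\sqrt{T_n'}\ll L_n$). If $x_j\in B(A,L_n)$, one simply bounds ${\rm cap}(\xi_j\cap A,R_n^2)\le{\rm cap}(\xi_j,T_n')$, whose expectation is $O(R_n^2\log^{-2}R_n)$ by Proposition~\ref{trace_cap_truncated_new}; the extra $\log^{-1}R_n$ therefore comes from the capacity of a \emph{short} walk segment, not from two-walk avoidance. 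The factor $|B(A,L_n)|$ then enters through the LCLT bound $P^y[x_j\in B(A,L_n)]\le C|B(A,L_n)|(T+jT_n')^{-2}$, and summing over $j$ produces the $T^{-1}$.
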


\begin{lemma}\label{le:free_varphi}
    There exists some $C<\infty$ such that for all large $n$ and $y,z\in\mathbb Z^d$,
    $$E^{y,z}[\varphi^{(\rho_n)}(X^1[0,K(R_n)^2],X^2[0,K(R_n)^2])]\leq \begin{cases}CR_n^2\log^{-2}R_n,&d=4;\\CR_n,&d=5;\\C\log R_n,&d=6;\\C,&d>6.\end{cases}$$
\end{lemma}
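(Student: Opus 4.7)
Our starting point is the trivial bound $e_A^{(\rho_n)}(x)\le 1$, a consequence of $\sum_{m\ge 0}\mu_0^{(m)}(\rho_n)=\mu_1(\rho_n)+1$ (this is, in effect, \eqref{eq:rho_bound5}). It yields
\[
\varphi^{(\rho_n)}(A,B)\;\le\;\sum_{x\in A,\,w\in B}g(x,w)\;\le\;\sum_{s,t=0}^{KR_n^2}g(X^1_s,X^2_t),
\]
where the second inequality counts each lattice point of the range with its local-time multiplicity. For the expectation we use the identity $E^{y,z}[g(X^1_s,X^2_t)]=\sum_{n\ge s+t}p_n(y,z)$ (with $p_n$ the $n$-step transition kernel of SRW): indeed, for an auxiliary independent SRW $Y$, the combination $Y-X^2+X^1$ is itself an SRW of $n+s+t$ steps started at $y-z$ by symmetry of the step distribution. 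Combined with the Gaussian bound $p_n(\cdot,\cdot)\le Cn^{-d/2}$, we obtain $E^{y,z}[g(X^1_s,X^2_t)]\le C(s+t+1)^{(2-d)/2}$.

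\textbf{Summation for $d\ge 5$.} Setting $T:=KR_n^2$ and substituting $u=s+t$, the number of $(s,t)$ with $s+t=u$ is $\min(u+1,\,2T-u+1)$; elementary estimates on $\sum_{u=0}^{2T}\min(u+1,\,2T-u+1)(u+1)^{(2-d)/2}$ give $O(T^{(6-d)/2})$ for $d\in\{5,6\}$ and $O(1)$ for $d\ge 7$, producing the claimed $O(R_n)$, $O(\log R_n)$, and $O(1)$ bounds.

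\textbf{Refinement for $d=4$.} The above trivial bound only gives $O(R_n^2)$, off by the required $\log^{-2}R_n$. Use the sharper pointwise bound
\[
e_A^{(\rho_n)}(x)\;\le\;\log^{-10}\mu_1(\rho_n)+P^x[\widetilde H_A>N],\qquad N:=\lfloor\mu_1(\rho_n)\log^{-10}\mu_1(\rho_n)\rfloor,
\]
from the derivation of \eqref{eq:rho_bound4}. Expanding $e_A^{(\rho_n)}(x)\,e_B^{(\rho_n)}(w)$ produces four terms; the three involving at least one factor of $\log^{-10}\mu_1(\rho_n)$ are each controlled by the trivial bound above and contribute at most $O(R_n^2\log^{-10}R_n)$. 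By independence of $X^1,X^2$, the remaining main term is handled once we prove
\[
f_y(x)\;:=\;E^y\bigl[\mathbbm1_{x\in X^1[0,T]}\,P^x[\widetilde H_{X^1[0,T]}>N]\bigr]\;\le\;\frac{C\,P^y[H_x\le T]}{\log R_n},
\]
because applying the ``sum over local times'' reduction a second time then produces $\sum_{x,w}f_y(x)f_z(w)g(x,w)\le C\log^{-2}R_n\cdot\sum_{s,t}E^{y,z}[g(X^1_s,X^2_t)]\le CR_n^2\log^{-2}R_n$.

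\textbf{The main obstacle.} To prove the bound on $f_y(x)$, decompose by the first hit time $s=H_x^{X^1}\le T$. When $s\le T/2$, the strong Markov property makes $X^1[s,T]$ an independent SRW from $x$ of length $\ge T/2$; discarding avoidance of the past, the conditional probability in $f_y(x)$ is dominated by the non-intersection probability of two SRW from $x$ of lengths $N$ and $T-s$, both comparable to $R_n^2$ up to $\log$ factors, which in $d=4$ is $O(1/\log R_n)$ by \cite[Theorem 4.3.3]{trove.nla.gov.au/work/15980712} --- the same input used in the proof of Proposition~\ref{trace_cap_truncated_new}. When $s>T/2$, the future piece is too short; instead exploit the long past $X^1[0,s-1]$, which by reversibility of SRW has the same law as a walk from $x$ conditioned on reaching $y$ and avoiding $x$ internally, and non-intersection with an independent SRW from $x$ of length $N$ is again $O(1/\log R_n)$. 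The principal difficulty lies in this ``long past'' case: Lawler's unconditional estimate must be transferred to the conditioned walk. A clean resolution is to rewrite $f_y(x)$ as the two-point joint probability $P^{y,x}[H_x^{X^1}\le T,\,X^2[1,N]\cap X^1[0,T]=\emptyset]$ and invoke a non-intersection bound for SRW with different starting points; a secondary issue is that the pointwise bound degrades for $|y-x|\gg R_n$, but Gaussian tails make the contribution of such $x$ negligible.
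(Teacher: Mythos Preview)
For $d\ge 5$ your argument coincides with the paper's: both use $e_A^{(\rho_n)}\le 1$, rewrite $\sum_{s,t}E^{y,z}[g(X^1_s,X^2_t)]$ as $\sum_{s,t,u}p_{s+t+u}(y,z)$, and apply $p_j\le Cj^{-d/2}$ (your ``$O(T^{(6-d)/2})$ for $d\in\{5,6\}$'' is a slip at $d=6$, where the series is $O(\log T)$, but you state the correct final bound). For $d=4$ your route is genuinely different. The paper never attempts a pointwise bound on $e_{X^1[0,T]}^{(\rho_n)}$: it chops each walk into pieces $\xi_j^i=X^i[(j-1)L_n,jL_n]$ of mesoscopic length $L_n=R_n\log^{-c}R_n$, uses $E[\mathrm{cap}^{(\rho_n)}(\xi_j^i)]\le CL_n\log^{-1}R_n$ (a direct consequence of~\eqref{eq:rho_bound4} and Proposition~\ref{trace_cap_truncated_new}), and bounds $\varphi^{(\rho_n)}(\xi_j^1,\xi_l^2)$ for well-separated pieces by $\mathrm{cap}^{(\rho_n)}(\xi_j^1)\,\mathrm{cap}^{(\rho_n)}(\xi_l^2)\cdot g(x_j^1,x_l^2)$; the two $\log^{-1}R_n$ factors come from the two capacities, and the residual Green's-function double sum is handled exactly as in the $d\ge5$ computation. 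This uses only first-moment capacity estimates already established in the paper.

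Your plan instead hinges on the pointwise estimate $f_y(x)\le CP^y[H_x\le T]/\log R_n$, and the case $H_x>T/2$ --- your ``principal difficulty'' --- is left unresolved: invoking ``a non-intersection bound for SRW with different starting points'' is not a usable black box here, and transferring Lawler's estimate to the reversed conditioned past is genuinely nontrivial. The gap is closed more simply than you suggest by splitting at $T-N$ rather than $T/2$. Since $N\asymp T\log^{-10}T$, on $\{H_x\le T-N\}$ the future segment $X^1[H_x,H_x+N]$ still has length $N$ and Lawler gives $O(1/\log N)=O(1/\log R_n)$; the remainder $P^y[T-N<H_x\le T]\le \sum_{s>T-N}p_s(y,x)\le CN\,T^{-2}$ can then be combined with a lower bound $P^y[H_x\le T]\gtrsim |y-x|^{-2}$ for $|y-x|\lesssim R_n$ and with Gaussian tails for $|y-x|\gg R_n$. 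This can be made to work, but it still leaves a case analysis on $|y-x|$ and a hitting-probability lower bound to verify, whereas the paper's chopping argument needs neither.
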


Now we prove (\ref{eq:e56}) with Lemmas \ref{le:intersect_cap} and \ref{le:free_varphi}.
\begin{proof}[Proof of (\ref{eq:e56}).]
    Write $\mathcal J_{x,i+1,j}[{\rm range}(\eta_j);\emptyset;\widehat\eta_j]=\sum_{l=1}^{N_j}\delta_{\zeta_{j,l}}$, $T_{j,l}=T(\zeta_{j,l})$, $t^1_{j,l}=t(\zeta_{j,l},\eta_j)$, $t^2_{j,l}=T_{j,l}-t^1_{j,l}$, $x_{j,l}=\zeta_{j,l}(t^1_{j,l})$. By a union bound, 
    \begin{align}
        \label{eq:e5}\mathbb P[E_4^c|\mathcal F_{m,i},\mathcal G]\leq&\sum_{0\leq j\leq s,1\leq l\leq N_j}\mathbb P[{\rm cap}^{(\rho_n)}(\widehat\zeta_{j,l}\backslash\overline\zeta_{j,l})>4\widetilde C_n/I_n^{1/2}|\mathcal F_{m,i},\mathcal G],\\
        \label{eq:e6}\mathbb P[E_5^c|\mathcal F_{m,i},\mathcal G]\leq&\sum_{(j_1,l_1)\neq(j_2,l_2)}\mathbb P[\varphi^{(\rho_n)}(\overline\zeta_{j_1,l_1},\overline\zeta_{j_2,l_2})>\widetilde C_n/I_n|\mathcal F_{m,i},\mathcal G].
    \end{align}
    (Here, the proper part $\overline\zeta_{j,l}$ for $\zeta_{j,l}$ that is atypical is similarly defined.) Now, it suffices to bound every term on the right-hand side of (\ref{eq:e5}) and (\ref{eq:e6}) from above. By Lemma \ref{local}, conditionally on $\mathcal F_{m,i}$ and $\mathcal G$, with the restriction ${\rm Good}(\aleph_m^1,i)\cap E_1^c$, the distribution of $\zeta_{j,l}$ is $\mu(x_{j,l},t^1_{j,l},t^2_{j,l};{\rm range}(\eta_j))$, for all $0\leq j\leq s, 1\leq l\leq N_j$, and they are mutually independent. Let $\zeta_{j,l}^f=\zeta_{j,l}[t_{j,l}^1+R_n^2/I_n,T_{j,l}]$, $\zeta_{j,l}^b=\zeta_{j,l}[0,t_{j,l}^1-(R_n)^2/I_n]$. Recall the constant $C_8$ from Proposition \ref{recursive_good_exploration}. By Lemma \ref{lem:forget}, we can take $C_8(\beta,k_1,r)$ sufficiently small such that for all $\theta_1<C_8$ and all large $n$, conditionally on $\mathcal F_{m,i}$ and $\mathcal G$, there is a coupling between $\zeta_{j,l}$ and $X^{j,l}$ ($j=0,1,...,s,\ l=1,...,N_j$), where $X^{j,l}\sim P^{x_{j,l}}$ are mutually independent, and satisfy
    $$X^{j,l}[-t_{j,l}^1,-t_n]\simeq\zeta_{j,l}[0,t_{j,l}^1-t_n],X^{j,l}[t_n,t_{j,l}^2]\simeq\zeta_{j,l}[t_{j,l}^1+t_n,T_{j,l}]$$
    and $|X^{j,l}(u-t_{j,l}^1)-\zeta_{j,l}(u)|<l_n$
    for all $0\leq u\leq T_{j,l}$ 
    with probability $1-r/N$, where $N=32(\beta k_1^\alpha)^2$.

    Now we consider the right-hand side of (\ref{eq:e5}). Note that by definition, $$\widehat\zeta_{j,l}\backslash\overline\zeta_{j,l}\subset\zeta_{j,l}[t^1_{j,l}-R_n^2/I_n,t_{j,l}^1+R_n^2/I_n]\cup(\zeta_{j,l}^f\cap V')\cup(\zeta_{j,l}^b\cap V'),$$
    where $V'=B(V(\mathcal J_{x,i}),L_n)\cap B(x,2\gamma R_n)$. Therefore,
    \begin{equation}\label{eq:cap^rho}\begin{aligned}
        &\mathbb P[{\rm cap}^{(\rho_n)}(\widehat\zeta_{j,l}\backslash\overline\zeta_{j,l})>4\widetilde C_n/I_n^{1/2}|\mathcal F_{m,i},\mathcal G]\\
        \leq\;&\mathbb P[{\rm cap}^{(\rho_n)}(\zeta_{j,l}[t_{j,l}^1-R_n^2/I_n,t_{j,l}^1-t_n])+{\rm cap}^{(\rho_n)}(\zeta_{j,l}[t_{j,l}^1+t_n,t_{j,l}^1+R_n^2/I_n])\mid\mathcal F_{m,i},\mathcal G]\\
        &+\mathbb P[{\rm cap}^{(\rho_n)}(\zeta_{j,l}^f\cap V')>\widetilde C_n/I_n^{1/2}|\mathcal F_{m,i},\mathcal G]+\mathbb P[{\rm cap}^{(\rho_n)}(\zeta_{j,l}^b\cap V')>\widetilde C_n/I_n^{1/2}|\mathcal F_{m,i},\mathcal G]\\
        \le\;&P^{x_{j,l}}[{\rm cap}^{(\rho_n)}(X[-(R_n)^2/I_n,-t_n])+{\rm cap}^{(\rho_n)}(X[t_n,(R_n)^2/I_n])>\widetilde C_n/I_n^{1/2}]\\
        &+P^{x_{j,l}}[{\rm cap}^{(\rho_n)}(X[R_n^2/I_n,t_{j,l}^2]\cap B(V',L_n))>\widetilde C_n/I_n^{1/2}]\\&+P^{x_{j,l}}[{\rm cap}^{(\rho_n)}(X[-t_{j,l}^1,-R_n^2/I_n]\cap B(V',L_n))>\widetilde C_n/I_n^{1/2}]+3r/N,
    \end{aligned}\end{equation}
    where the second inequality above follows from the coupling between $\zeta_{j,l}$ and $X^{j,l}$.
    
    We now estimate the probabilities appearing in the last three lines of (\ref{eq:cap^rho}) which we denote by $p_1,p_2$ and $p_3$ respectively. By (\ref{eq:rho_bound4}),
    \begin{equation*}\begin{aligned}
        {\rm cap}^{(\rho_n)}(X[0,(R_n)^2/I_n])\le\;& R_n^2/I_n\cdot \log^{-10}R_n+{\rm cap}(X[-R_n^2/I_n,R_n^2/I_n],R_n^2/I_n)\\
        \le\;&\widetilde C_n/4I_n^{1/2}+{\rm cap}(X[-R_n^2/I_n,R_n^2/I_n],R_n^2/I_n).\\
    \end{aligned}\end{equation*}
    Thus, we have
    \begin{align*}
            p_1\le\;&2P^{\bm0}[{\rm cap}^{(\rho_n)}(X[0,R_n^2/I_n])>\widetilde C_n/2I_n^{1/2}]\\
            \le\;& 2P^{\bm0}[{\rm cap}(X[0,(R_n)^2/I_n],R_n^2/I_n)>\widetilde C_n/4I_n^{1/2}].
    \end{align*}
    By Proposition \ref{eq:Tcap_ex} and the Markov inequality, 
    \begin{equation}\label{eq:cap_near}
        \begin{aligned}                   
            p_1\le C\mathbb E[{\rm cap}(X[0,(R_n)^2/I_n],R_n^2/I_n)]\cdot I_n^{1/2}/\widetilde C_n\le CI_n^{-1/2}.
        \end{aligned}
    \end{equation}
    Obviously, $p_2=p_3$,
    so we only bound the former from above. In the four-dimensional case, by (\ref{eq:rho_bound4}), we have
    \begin{equation*}p_2\le P^{x_{j,l}}[{\rm cap}(X[R_n^2/I_n,KR_n^2]\cap B(V',L_n),R_n^2)>\frac12\widetilde C_n/I_n^{1/2}];
    \end{equation*}
    while in the higher dimensional case,
    by the trivial upper bound of the $\rho_n$-capacity,
    \begin{equation*}
        p_2\le P^{x_{j,l}}\left[\left|X[R_n^2/I_n,KR_n^2]\cap B(V',L_n)\right|>\frac12\widetilde C_n/I_n^{1/2}\right].
    \end{equation*}
    In short, we have
    \begin{equation}\label{eq:captoY}
       p_2\le P^{x_{j,l}}[\xi>\frac12\widetilde C_n/I_n^{1/2}],
    \end{equation}
    where
    $$\xi=\begin{cases}{\rm cap}(X[R_n^2/I_n,KR_n^2]\cap B(V',L_n),R_n^2),&d=4;\\\left|X[R_n^2/I_n,KR_n^2]\cap B(V',L_n)\right|,&d\ge5.
    \end{cases}$$
    Note that since $B(V',L_n)$ consists of the $2L_n$-neighborhood of at most $(4\gamma+1)^d\cdot\beta k_1^\alpha$ good trajectories, $|B(V',2L_n)|\leq CR_n^2L_n^{d-2}I_n^{1/3}$. Thus, by (\ref{eq:captoY}) and Lemma \ref{le:intersect_cap},
    \begin{equation}\label{eq:cap_far}\begin{aligned}
        &p_2\le P^{x_{j,l}}[Y> \frac{1}{2}\widetilde C_n/I_n^{1/2}]\leq 2I_n^{1/2}/\widetilde C_n\cdot E^{x_{j,l}}[Y]\\
        \leq\;&CI_n^{1/2}/\widetilde C_n\cdot R_n^2L_n^{d-2}I_n^{1/3}\cdot(R_n^2/I_n)^{1-d/2}(1+\log R_n\cdot\mathbf 1_{d=4})^{-1}\leq CI_n^{-1/6}.
    \end{aligned}\end{equation}
    Combining (\ref{eq:cap^rho}), (\ref{eq:cap_near}) and (\ref{eq:cap_far}) gives
    \begin{equation}\label{eq:e5_final}
        \mathbb P[{\rm cap}^{(\rho_n)}(\widehat\zeta_{j,l}\backslash\overline\zeta_{j,l})>4\widetilde C_n/I_n^{1/2}|\mathcal F_{m,i},\mathcal G]\leq 4r/N
    \end{equation}
    for all large $n$.
    
    To estimate the terms on the right-hand side of (\ref{eq:e6}), we write for all $y,z\in\mathbb Z^d$
    $$\Phi(y,z)=E^{y,z}[\varphi^{(\rho_n)}(X^1[0,K(R_n)^2],X^2[0,K(R_n)^2])].$$
    By Lemma \ref{le:free_varphi},
    \begin{equation}\label{eq:Phi_bound}
        \sup_{y,z\in\mathbb Z^d}\Phi(y,z)\leq\begin{cases}CR_n,&d\geq5;\\CR_n^2\log^{-2}R_n,&d=4.\end{cases}
    \end{equation}
    Therefore, we have for all large $n$,
    \begin{equation}\label{eq:varphi_final}\begin{aligned}
        &\mathbb P[\varphi^{(\rho_n)}(\overline\zeta_{j_1,l_1},\overline\zeta_{j_2,l_2})>\widetilde C_n/I_n|\mathcal F_{m,i},\mathcal G]\\
        \leq\;\;&\mathbb P[\varphi^{(\rho_n)}(\zeta_{j_1,l_1}^f,\zeta_{j_2,l_2}^f)+\varphi^{(\rho_n)}(\zeta_{j_1,l_1}^f,\zeta_{j_2,l_2}^b)+\varphi^{(\rho_n)}(\zeta_{j_1,l_1}^b,\zeta_{j_2,l_2}^f)\\&+\varphi^{(\rho_n)}(\zeta_{j_1,l_1}^b,\zeta_{j_2,l_2}^b)>\widetilde C_n/I_n|\mathcal F_{m,i},\mathcal G]\\
        \leq\;\;&I_n/\widetilde C_n\cdot\mathbb E[\Phi(\zeta_{j_1,l_1}(t),\zeta_{j_2,l_2}(t))+\Phi(\zeta_{j_1,l_1}(t),\zeta_{j_2,l_2}(-t))+\Phi(\zeta_{j_1,l_1}(-t),\zeta_{j_2,l_2}(-t))\\&+\Phi(\zeta_{j_1,l_1}(t),\zeta_{j_2,l_2}(-t))|\mathcal F_{m,i},\mathcal G]+2r/N\\
        \overset{(\ref{eq:Phi_bound})}{\leq}&I_n^{-1}+2r/N<4r/N.
    \end{aligned}\end{equation}
    Here, $t=\lfloor R_n^2/I_n\rfloor$. 
    By (\ref{eq:e5}), (\ref{eq:e6}), (\ref{eq:e5_final}) and (\ref{eq:varphi_final}),
    $$\begin{aligned}
        \mathbb P[(E_4\cap E_5)^c|\mathcal F_{m,i},\mathcal G]\leq\;&|\{(j,l):0\leq j\leq s,1\leq l\leq N_j\}|^2\cdot 4r/N\\
        \leq\;&(\beta k_1^\alpha)^2\cdot 4r/N=r/8<r/4.
    \end{aligned}$$
    This finishes the proof.
\end{proof}

We now turn to Proposition \ref{coupling1}. Recall the symbol $P_Y^\mathcal S$ for the law of the branching process $Y$ from the beginning of this section. By simply modifying the definition of ${\rm Good}(x,i)$, we obtain the following analogue of Proposition \ref{recursive_good_exploration}.

\begin{proposition}\label{good*}
    Assume that $\aleph_m^1=x\neq\Delta$. Arbitrarily fix $\beta$, then there exists $k_1$ large and a constant $C_8^*=C_8^*(\beta,k_1,r)$, such that for all $\theta_1<C_8^*$ and all large $n$, there exists a coupling $\mathbb Q$ of $(\mathcal L_{x,i})_{1\leq i\leq\alpha}$ with conditional law on $\mathcal F_{m,0}$ and $(Y_i)_{i\geq0}$ with law $P^{S_x}$ satisfying the following. Let $\mathcal F^*_{x,i}=\sigma\{\mathcal F_{x,i},Y_j,1\leq j\leq i\}$, and ${\rm Good}^*(x,i)={\rm Good}(x,i)\cap\{\mathcal L_{x,j}=Y_j,j=1,...,i\}$, then on ${\rm Good}^*(x,i)$,
    $$\mathbb Q[{\rm Good}^*(x,i+1)|\mathcal F^*_{m,i}]>1-r$$
    for all $i\in\{0,1,...,\alpha-1\}$.
\end{proposition}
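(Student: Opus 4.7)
The plan is to build the required coupling iteratively across layers, essentially bundling together the payload of Lemma \ref{recursive_coupling1} with that of Proposition \ref{recursive_good_exploration}. Set $Y_0 = \mathcal{S}_x$ by fiat. At step $i+1$, assume inductively that $(\mathcal{L}_{x,j})_{1 \le j \le i}$ and $(Y_j)_{0 \le j \le i}$ have been jointly constructed on the same probability space. Generate $Y_{i+1}$ using the very same family of i.i.d.\ PPPs $\{\overline{\mathcal{J}}_{x,i+1,j}\}_{j \ge 0}$ from the proof of Lemma \ref{recursive_coupling1}: enumerate $Y_i = \{\eta_0', \eta_1', \ldots\}$ in some prescribed way, and let
$$Y_{i+1} = \bigcup_{j} \overline{\mathcal{J}}_{x,i+1,j}[\mathrm{range}(\eta_j'); \emptyset; \widehat{\eta}_j'].$$
Since the $\overline{\mathcal{J}}_{x,i+1,j}$ are independent copies of $\overline{\mathcal{X}}^{u_n^+,\rho_n}$, this reproduces the offspring mechanism of $P_Y^{\mathcal{S}_x}$, so the marginal law of $(Y_i)_{i \ge 0}$ is as required. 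On the event ${\rm Good}^*(x,i)$ we have $\mathcal{L}_{x,j} = Y_j$ for $j \le i$, so $Y_i$ and $\mathcal{L}_{x,i}$ enumerate the same trajectories and $\mathcal{K}_{x,i+1}$ (as defined in Lemma \ref{recursive_coupling1}) coincides with $Y_{i+1}$.

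Choose $k_1 > C_7(\beta, r/2)$ and then $C_8^* = C_8^*(\beta,k_1,r) \le C_8(\beta, k_1, r/2)$ small enough that Proposition \ref{recursive_good_exploration} yields
$$\mathbb{P}[\, {\rm Good}(x,i+1) \mid \mathcal{F}_{m,i} \,] > 1 - r/2 \quad \text{on } {\rm Good}(x,i),$$
and simultaneously the coupling of Lemma \ref{recursive_coupling1} (with parameter $r/2$) gives $\mathbb{Q}[\mathcal{L}_{x,i+1} = \mathcal{K}_{x,i+1} \mid \mathcal{F}_{m,i}] > 1 - r/2$ uniformly on ${\rm Good}^*(x,i)$. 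On the event ${\rm Good}^*(x,i)$, the extra randomness $(Y_j)_{1\le j \le i}$ adjoined in $\mathcal{F}^*_{m,i}$ is an $\mathcal{F}_{m,i}$-measurable function (namely $(\mathcal{L}_{x,j})_{1\le j\le i}$), so conditioning on $\mathcal{F}^*_{m,i}$ reduces to conditioning on $\mathcal{F}_{m,i}$ and both bounds transfer. A union bound gives
$$\mathbb{Q}\bigl[\,\{\mathcal{L}_{x,i+1} = Y_{i+1}\} \cap {\rm Good}(x,i+1) \mid \mathcal{F}^*_{m,i}\,\bigr] > 1 - r \quad \text{on } {\rm Good}^*(x,i),$$
which is exactly the conclusion ${\rm Good}^*(x,i+1)$.

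The main subtlety is verifying that the bound of Proposition \ref{recursive_good_exploration} is genuinely available at the enlarged sigma-algebra $\mathcal{F}^*_{m,i}$, and that the new PPPs $\overline{\mathcal{J}}_{x,i+1,j}$ used to generate $Y_{i+1}$ are independent of $\mathcal{F}^*_{m,i}$; both follow immediately since the $Y_j$ for $j \le i$ are measurable functions of $\mathcal{F}_{m,i}$ on ${\rm Good}^*(x,i)$, and the $\overline{\mathcal{J}}_{x,i+1,\cdot}$ are, by construction, independent of $\overline{\mathcal{X}}^{u_n^+,\rho_n}$ and of every previous $\overline{\mathcal{J}}_{x,j,\cdot}$. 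No fresh estimate is needed; the proposition follows by mechanically chaining the two previously established results.
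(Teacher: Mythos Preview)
Your proposal is correct and follows exactly the route the paper indicates: the paper itself offers no detailed proof, stating only that ``by simply modifying the definition of ${\rm Good}(x,i)$, we obtain the following analogue of Proposition \ref{recursive_good_exploration}'', and your argument is precisely that modification---run Lemma \ref{recursive_coupling1} and the estimates of Proposition \ref{recursive_good_exploration} in tandem, defining $Y_{i+1}$ via the same auxiliary PPPs so that on ${\rm Good}^*(x,i)$ one has $Y_{i+1}=\mathcal K_{x,i+1}$, and then union-bound. Your handling of the enlarged $\sigma$-algebra $\mathcal F^*_{m,i}$ is the only place requiring care, and your justification (that the $\overline{\mathcal J}_{x,i+1,\cdot}$ are independent of everything in $\mathcal F^*_{m,i}$, while the $Y_j$ for $j\le i$ carry no information beyond $\mathcal F_{m,i}$ on the event ${\rm Good}^*(x,i)$) is adequate.
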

Repeating the proof of Proposition \ref{good_exploration}, but with Proposition \ref{good*} applied iteratively (instead of Proposition \ref{recursive_good_exploration}), we readily obtain Proposition \ref{coupling1}.

\subsection{Finding seeds for neighboring vertices}
This subsection is dedicated to the proof of Proposition \ref{produce_seed}. Recall $\beta$ from (\ref{eq:intro_beta}), which is the number of trajectories in a seed (c.f. Definition \ref{eq:def_seed}), and the definition of events (\ref{eq:def_good}) and (\ref{eq:def_seed}). Assuming goodness of the exploration at $x\in\mathbb L_n$, we only need to show the existence of at least $\beta$ trajectories in $\mathcal L_{x,\alpha}$ that stay inside $B_{x+R_n\cdot e}$ for all $e\in\mathcal U_d\cup(-\mathcal U_d)$. Moreover, by Proposition \ref{coupling1}, we can replace $\mathcal L_{x,\alpha}$ with $Y_\alpha$ in the last statement. Therefore, it suffices to show:

\begin{proposition}\label{Y_alpha}
    There exists $C_9>0$ such that for an arbitrary $\theta_1<C_9$, the following holds. For all $r>0$, there exists a large constant $C_{10}=C_{10}(r)$, such that for all $\beta>C_{10}$ and $\mathcal{S}\subset W^{[0,\infty)}$ composed of $\beta$ good trajectories staying inside $[0,R_n)^d$ and any $e\in\mathcal U_d\cup(-\mathcal U_d)$, the event
    $$E_e:=\{Y_\alpha\mbox{ contains }\beta\mbox{ good trajectories staying inside }B_{R_n\cdot e}^n\}$$ happens with probability at least $1-r$ if $(Y_i)_{i\geq0}\sim P_Y^\mathcal S$.
\end{proposition}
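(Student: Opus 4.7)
\textbf{Proof plan for Proposition \ref{Y_alpha}.} My strategy is to treat $(Y_i)_{i\ge 0}$ as a supercritical spatial branching process and combine first- and second-moment estimates with a local central limit theorem (LCLT) applied to the positions of descendants. First, supercriticality: by Lemma~\ref{M_apriori} and the defining relation $u_n^+ = (1+\epsilon) u_n$, for any typical parent $\eta$ one has $\mathbb{E}\big[\sum_{\zeta} u_n^+ e_\zeta(\widehat\zeta) \,\big|\, \eta\big] > (1+\epsilon/2)\, u_n^+ e_\eta(\widehat\eta)$, where the sum ranges over the children of $\eta$; the normalising factor of $u_n$ from \eqref{eq:def_un} cancels against the lower bound on $\mathbb{E}[e_\zeta(\widehat\zeta)]$ supplied by \eqref{eq:equi_lower}. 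Iterating this inequality over $\alpha$ generations yields exponential growth of the weighted population, $\mathbb{E}[\sum_{\zeta \in Y_\alpha} e_\zeta(\widehat\zeta)] \gtrsim (1+\epsilon/2)^\alpha \beta \widetilde{C}_n$.

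\emph{Localisation via LCLT.} A generation-$i$ descendant's starting point is, up to small perturbations, a sum of $i$ approximately independent random-walk increments, each of scale $R_n$: every parent runs a random walk of duration $\Theta(R_n^2)$, and its child starts somewhere in the parent's $*$-proper part. Coupling the true offspring trajectories with independent unconditioned random walks---via Lemma~\ref{lem:forget} together with the replacement scheme used in Lemma~\ref{recursive_coupling1}---I would reduce the analysis to an $\alpha$-fold convolution of random-walk kernels and invoke the LCLT \cite[Theorem~1.2.1]{trove.nla.gov.au/work/15980712}: the mass landing in $B_{R_n\cdot e}^n$ at generation $\alpha$ is at least a positive constant times $\alpha^{-d/2}$. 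A classical ``stay in a box'' estimate costs only a further constant factor to guarantee that the entire range of the descendant (not merely its endpoint) is contained in $B_{R_n\cdot e}^n$. Combining with supercriticality, for each starting $\eta\in\mathcal{S}$ the expected number of descendants of $\eta$ in $Y_\alpha$ whose range lies in $B_{R_n\cdot e}^n$ is at least a positive constant times $\alpha^{-d/2}(1+\epsilon/2)^\alpha$, which grows without bound as $\alpha\to\infty$.

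\emph{Concentration and choice of parameters.} Let $N_e := |\{\zeta \in Y_\alpha : \zeta \subset B_{R_n\cdot e}^n\}|$. A standard second-moment calculation for branching processes---exploiting that the $\beta$ starting trajectories evolve independently and that per-generation offspring counts are Poisson with uniformly bounded mean---gives $\mathrm{Var}(N_e) \le C\, \mathbb{E}[N_e]^2/\beta$. Chebyshev then yields $\mathbb{P}[N_e < \beta] \le C'/\beta$ provided $\alpha$ has been chosen so that $\mathbb{E}[N_e] \ge 2\beta$. I therefore fix $\alpha$ (depending only on $d,\epsilon$ and $(\rho_n)$) so the per-ancestor expected count exceeds $2$, and then take $\beta > C_{10}(r)$ large. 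The pairwise seed condition $\varphi^{(\rho_n)}(\overline\eta_1,\overline\eta_2) < \widetilde C_n/I_n$ required to form a seed can be enforced by further restricting to descendants whose starting points lie in disjoint sub-boxes of $B_{R_n\cdot e}^n$ of side length $R_n/\beta$; the Green-function estimate \eqref{eq:green's_estimate} then bounds $\varphi^{(\rho_n)}$ on such well-separated pairs well below $\widetilde C_n/I_n$ for all large $n$.

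\emph{Main obstacle.} The chief technical hurdle is making the LCLT step rigorous, since the offspring of a single parent are spatially correlated (they all share the parent's entire path and hence the random set $\widehat\eta$). I plan to address this via a coupling between the true branching step and a ``free'' step in which child starting points are i.i.d.\ draws from a smooth proxy for the equilibrium measure on $\widehat\eta$ and the children are unconditioned random walks, controlling the coupling error by Lemma~\ref{lem:forget} together with an $L^1$-estimate for the conditional offspring intensity, in the spirit of Lemma~\ref{recursive_coupling1}. Once this coupling is in place with error $o(1)$ as $n\to\infty$, the spatial distribution of $Y_\alpha$ becomes a genuine $\alpha$-fold convolution of random-walk kernels and the LCLT applies directly.
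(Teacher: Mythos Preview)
Your overall strategy---supercriticality from Lemma~\ref{M_apriori}, an LCLT-type localisation for the spine of the branching process via couplings in the spirit of Lemma~\ref{lem:forget}, and a second-moment/Chebyshev bound exploiting independence across the $\beta$ ancestors---is exactly the paper's approach (see the processes $(\overline Y_i)$, $(Z_i)$, $(K_i)$ and Proposition~\ref{lclt}, together with the first- and second-moment computations \eqref{5.11}--\eqref{5.12}). You have correctly identified both the key mechanism and the main technical obstacle.

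One point of confusion: the paragraph on ``the pairwise seed condition $\varphi^{(\rho_n)}(\overline\eta_1,\overline\eta_2)<\widetilde C_n/I_n$'' is not part of what Proposition~\ref{Y_alpha} asks you to prove. The statement only requires $\beta$ typical trajectories staying inside $B_{R_n\cdot e}^n$; the $\varphi^{(\rho_n)}$ bound needed for these to constitute a seed comes for free from the event ${\rm Good}(\aleph_m^1)$ (first bullet of Definition~\ref{def:good_exploration}), which is handled separately in Proposition~\ref{good_exploration} and combined with Proposition~\ref{Y_alpha} only in the proof of Proposition~\ref{produce_seed}. Your proposed fix---restricting to starting points in disjoint sub-boxes of side $R_n/\beta$---would not work anyway, since typical trajectories have diameter up to $MR_n\gg R_n/\beta$, so separating starting points does not separate ranges and hence does not control $\varphi^{(\rho_n)}$. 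Simply drop this step.
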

\begin{proof}[Proof of Proposition \ref{produce_seed} assuming Proposition \ref{Y_alpha}.]
    Recall $C_5,C_6$ from Proposition \ref{produce_seed} and $C_3,C_4$ from Proposition \ref{good_exploration}. Take $C_5(r)=C_{10}(r/8d)$ and $C_6(\beta,r)=C_4(\beta,r/2)\wedge C_9$. Let $$\text{Seed}'(\aleph_m^1)=\bigcap_{e\in\mathcal U_d\cup(-\mathcal U_d)}\{\exists\beta\mbox{ good trajectories}\in\mathcal L_{\aleph_m^1,\alpha}\mbox{ staying inside }\aleph_m^1+ R_n\cdot e+[0,R_n)^d\},$$
    and let $\text{Seed}''(\aleph_m^1)$ be the above event with $\mathcal L_{\aleph_m^1,\alpha}$ replaced by $Y_\alpha$.
    Take $k_1>C_3(\beta,r/2)$, $\beta>C_5$ and $\theta_1<C_6$, then by Proposition \ref{good_exploration},
    \begin{equation}\label{eq:pf_seed1}\mathbb{P}[\text{Seed}(\aleph_m^1)|\mathcal F_{m,0}]\geq \mathbb{P}[\text{Seed}'(\aleph_m^1)|\mathcal F_{m,0}]-\mathbb{P}[{\rm Good}(\aleph_m^1)^c|\mathcal F_{m,0}]>\mathbb{P}[\text{Seed}'(\aleph_m^1)|\mathcal F_{m,0}]-r/2.
    \end{equation}
    By Proposition \ref{coupling1}, for all large $n$.
    \begin{equation}\label{eq:pf_seed2}
        \mathbb{P}[\text{Seed}'(\aleph_m^1)|\mathcal{F}_{m,0}]\geq P^{S_{\aleph_m^1}}[\text{Seed}''(\aleph_m^1)]-r/4
    \end{equation}
    Moreover, by Proposition \ref{Y_alpha}, 
    \begin{equation}\label{eq:pf_seed3}
        P_Y^{S_{\aleph_m^1}}[\text{Seed}''(\aleph_m^1)]>1-2d\cdot r/8d=1-r/4.
    \end{equation}
    The proposition then follows from (\ref{eq:pf_seed1}), (\ref{eq:pf_seed2}) and (\ref{eq:pf_seed3}).
\end{proof}

The remainder of the subsection is devoted to proving Proposition \ref{Y_alpha}. For simplicity, we always assume $(Y_i)_{i\geq0}\sim P_Y^\mathcal S$ from now on. We aim to obtain the ``spatial distribution'' of trajectories in $Y_\alpha$ by studying the spatial distribution of locations where the offspring trajectories hit their parent. To this end, we define a process $(\overline Y_i)_{i\ge0}$ dominated by $(Y_i)_{i\geq0}$ whose $(i+1)$-th layer is generated from the $i$-th layer by first sampling the hitting locations whose collection will be denoted by $Z_i$, and then sampling paths from them. Noting that $(Z_i)_{i\ge0}$ form a branching process with agents in $\mathbb Z^d$, we bound from below first the number of hitting points in $Z_{\alpha-1}$ that are close enough to the center of each neighboring box $B_{x+R_n\cdot e}$, then the conditional probability that the trajectories stemming from those points stay inside $B_{x+R_n\cdot e}$, which yields Proposition \ref{Y_alpha}.

Now we make rigorous the aforementioned idea. For $\mathcal{S}\subset W^{[0,\infty)}$, we construct two processes $(\overline{Y}_i)_{i\geq0}\in\{0,1\}^{W^{[0,\infty)}}$ and 
$(Z_{i,\eta})_{i\geq0,\eta\in\overline{Y}_i}\in\{0,1\}^{\mathbb Z^d}$ simultaneously under a new probability measure $\overline{P}^\mathcal{S}$. Let $\overline{Y}_0=\mathcal{S}$. Assume that $\overline{Y}_i$ is sampled, then for $\eta\in \overline{Y}_i$, let $Z_{i,\eta}$ be an independent PPP supported on ${\rm range}(\eta)\subset\mathbb{Z}^d$ with density measure $$\sum_{x\in\widehat{\eta}}u_n^+(1-\epsilon/2)\cdot e_\eta(x)\cdot\delta_x,$$ sample one new trajectory $\zeta_x$ for each $x\in Z_{i,\eta}$ independently according to distribution $\overline{\mu}_{x,\eta}$ and set $\overline{Y}_{i+1}=\bigcup_{\eta\in\overline{Y}_i}\{\zeta_x:x\in{Z}_{i,\eta}\}$. We write $Z_i=\bigcup_{\eta\in\overline{Y}_i}Z_{i,\eta}$.

By construction, $(\overline Y_i)_{i\geq0}$ is also a branching process with agents in $W^{[0,\infty)}$. In general, $(\overline{Y}_i)_{i\geq0}$ ``trims off'' the excessive intensity of descendant trajectories introduced in $(Y_i)_{i\geq0}$ due to the finiteness of backward paths and highlights the ``branching'' nature of the process more explicitly without introducing PPPs on $W^{[0,\infty)}$. The following lemma reveals the relation between $(Y_i)_{i\geq0}$ and $(\overline{Y}_i)_{i\geq0}$, whose proof is straightforward given Lemma \ref{M_apriori}.
\begin{lemma}\label{coupling1.5}
    For large $n$, the following holds: for all $\mathcal{S}\subset\mathscr T_n$, there exists a coupling $\mathbb{Q}$ between $(Y_i)_{i\geq0}$ and $(\overline{Y}_i)_{i\geq0}$ such that $$\mathbb{Q}[Y_i\supset\overline{Y}_i,0\leq i\leq\alpha]=1.$$
\end{lemma}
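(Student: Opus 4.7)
The plan is an induction on the layer index $i$. We set $Y_0=\overline{Y}_0=\mathcal{S}$, and, assuming $Y_j\supset\overline{Y}_j$ has been coupled on the same probability space for all $j\le i$, I extend the coupling to layer $i+1$. Since in both processes the offsprings of distinct agents are independent PPPs, it suffices to couple, for each $\eta\in\overline{Y}_i\subset Y_i$, the offspring PPP of $\eta$ in $\overline{Y}$ (the trajectories $\{\zeta_x:x\in Z_{i,\eta}\}$) with the offspring PPP of $\eta$ in $Y$, namely $\overline{\mathcal X}^{u_n^+,\rho_n}[{\rm range}(\eta);\emptyset;\widehat\eta]$, so that the former is contained in the latter. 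Both are PPPs on the countable space $W^{[0,\infty)}$, so by Poisson thinning it is enough to verify pointwise that the intensity measures satisfy $\lambda_{\overline{Y}}(\zeta)\le\lambda_Y(\zeta)$ for every $\zeta\in W^{[0,\infty)}$.

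For the comparison, fix $\zeta\in\mathscr T_n\cap\mathcal H[{\rm range}(\eta);\widehat\eta]$ with first hitting time $\tau$ and first hitting point $x^*=\zeta(\tau)\in\widehat\eta$. Lemma \ref{local} (the rerooting identity), combined with $\mu(x,m,l;{\rm range}(\eta))(\zeta)=\mu(x,m,l)(\zeta)/P^{x}[\widetilde H_\eta>m]$ and $\mu(x^*,\tau,T(\zeta)-\tau)(\zeta)=(2d)^{-T(\zeta)}$, yields
$$\lambda_Y(\zeta)=\frac{u_n^+\,\rho_n(T(\zeta))}{\mu_1(\rho_n)+1}\,(2d)^{-T(\zeta)}.$$
On the $\overline{Y}$ side, the only $(x,m)$ in the mixture defining $\overline{\mu}_{x,\eta}^*$ giving non-zero mass to $\zeta$ is $(x,m)=(x^*,\tau)$: for $m<\tau$ we would need $\zeta(m)=x\in\widehat\eta\subset{\rm range}(\eta)$, contradicting $\zeta[0,\tau-1]\cap{\rm range}(\eta)=\emptyset$, and for $m>\tau$ the conditioning in $\mu(x,m,l;{\rm range}(\eta))$ is violated. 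Hence, writing $C$ for the normalizing constant of $\overline{\mu}_{x,\eta}^*$,
$$\lambda_{\overline{Y}}(\zeta)=\frac{u_n^+(1-\epsilon/2)\,e_\eta(x^*)\,C\,\rho_n(T(\zeta))\,(2d)^{-T(\zeta)}}{P^{x^*}[\widetilde H_\eta>\tau]\cdot\overline{\mu}_{x^*,\eta}^*(\mathscr T_n)}.$$
The desired $\lambda_{\overline{Y}}(\zeta)\le\lambda_Y(\zeta)$ then reduces to the pointwise inequality $(1-\epsilon/2)\,C(\mu_1(\rho_n)+1)\le\tfrac{P^{x^*}[\widetilde H_\eta>\tau]}{e_\eta(x^*)}\cdot\overline{\mu}_{x^*,\eta}^*(\mathscr T_n)$.

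Since $P^{x^*}[\widetilde H_\eta>\tau]\ge P^{x^*}[\widetilde H_\eta=\infty]=e_\eta(x^*)$ trivially, and $\overline{\mu}_{x^*,\eta}^*(\mathscr T_n)\ge 1-\epsilon/8$ by Lemma \ref{M_apriori}, it remains to show $C(\mu_1(\rho_n)+1)\le (1-\epsilon/8)/(1-\epsilon/2)$ for large $n$. From $C^{-1}=\sum_{kR_n^2\le j\le KR_n^2}(j+1)\rho_n(j)$ and the appropriateness of $(\rho_n)_{n\in\mathbb N}$, together with the $k,K$ already fixed in Lemma \ref{M_apriori}, the upper tail $\sum_{j>KR_n^2}(j+1)\rho_n(j)=o(\mu_1(\rho_n))$ and the lower tail $\sum_{j<kR_n^2}(j+1)\rho_n(j)\le 2kR_n^2$ give $\sum_{kR_n^2\le j\le KR_n^2}(j+1)\rho_n(j)=(1+o(1))(\mu_1(\rho_n)+1)$, hence $C(\mu_1(\rho_n)+1)\to 1$, and the inequality holds with strict slack for all large $n$. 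Applying the Poisson-thinning coupling independently across $\eta\in\overline{Y}_i$ and merging with the inductive hypothesis produces the coupling at layer $i+1$, completing the induction. The one subtle point is the cancellation pattern: the factor $P^{x^*}[\widetilde H_\eta>\tau]$ appears in $\lambda_{\overline{Y}}$ but is absorbed on the $Y$ side via rerooting, and it is precisely this factor, bounded below by $e_\eta(x^*)$, together with the built-in safety margin $(1-\epsilon/2)$ in $Z_{i,\eta}$, that soaks up both the $(1-\epsilon/8)^{-1}$ loss coming from Lemma \ref{M_apriori} and the $o(1)$ corrections from truncating trajectory lengths to $[kR_n^2,KR_n^2]$.
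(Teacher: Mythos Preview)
Your argument is correct and follows essentially the same route as the paper's: both reduce the coupling to pointwise domination of the offspring PPP intensities and then verify $\lambda_{\overline Y}(\zeta)\le\lambda_Y(\zeta)$ via $e_\eta(x^*)\le P^{x^*}[\widetilde H_\eta>\tau]$, Lemma~\ref{M_apriori}, and the built-in $(1-\epsilon/2)$ safety factor; your write-up is in fact more explicit than the paper's, which compresses the same computation into the single line $\pi_2\ge u_n^+\sum_{x\in\widehat\eta}e_\eta(x)\overline\mu_{x,\eta}^*[\,\cdot\,;\mathscr T_n]\ge(1-\epsilon/8)\pi_1/(1-\epsilon/2)$. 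One small caveat: your assertion that $C(\mu_1(\rho_n)+1)\to 1$ is not justified as stated, since for general appropriate families the lower tail $\sum_{j<kR_n^2}(j+1)\rho_n(j)$ and the upper tail $\sum_{j>KR_n^2}(j+1)\rho_n(j)$ are only $O(\mu_1(\rho_n))$ with small constants, not $o(\mu_1(\rho_n))$; however, what you actually need is the weaker inequality $C(\mu_1(\rho_n)+1)\le(1-\epsilon/8)/(1-\epsilon/2)$, and this follows once $k$ is chosen small enough (which the paper's ``arbitrarily fix $k$'' permits).
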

\begin{proof}[Proof of Lemma \ref{coupling1.5}.]
    To prove the domination for branching processs, it suffices to prove domination for their respective offspring distributions. Obviously, the offspring distribution of the branching process $(\overline Y_i)_{i\geq0}$ at $\eta\in W^{[0,\infty)}$ is given by a PPP on $W^{[0,\infty)}$ with intensity measure
    $$\pi_1=u_n^+\cdot\sum_{x\in\widehat\eta}(1-\epsilon/2)e_\eta(x)\overline\mu_{x,\eta}.$$
    By the definition of $(Y_i)_{i\geq0}$, its offspring distribution at $\eta$ is given by a PPP with intensity
    $$\pi_2=u_n^+\cdot\sum_{x\in\widehat\eta}\sum_{m,l\in\mathbb N}\frac{\rho_n(m+l)}{\mu_1(\rho_n)+1}P^x[\widetilde H_\eta>m]\mu(x,m,l;{\rm range}(\eta))[\cdot;T_n]\geq u_n^+\cdot\sum_{x\in\widehat\eta}e_\eta(x)\overline\mu_{x,\eta}^*[\cdot;\mathscr T_n].$$
    By Lemma \ref{M_apriori}, $\overline\mu_{x,\eta}^*[\cdot;T_n]\geq(1-\epsilon/8)\overline\mu_{x,\eta}$, so $\pi_1\geq\pi_2$, which leads to the conclusion.
\end{proof}

For the aforementioned estimate regarding $Z_{\alpha-1}$ we are to establish, We define two correlated new processes $(K_i)_{i\geq0}\in\mathbb Z^d$ and $(\zeta_i)_{i\ge0}\in W^{[0,\infty)}$, and prove a LCLT-type result for $(K_i)_{i\ge0}$. The connection between these processes and $(Z_i)_{i\geq0}$ will be elaborated shortly afterwards.

For $\eta\in W^{[0,\infty)}$, let 
\begin{equation}\label{eq:def_overline_e}
    \overline e_{\eta}=\sum_{x\in\widehat\eta}e_\eta(x)\cdot\delta_x
\end{equation}
be the equilibrium measure on ${\rm range}(\eta)$ restricted to $\widehat\eta$. Recall that we denote by $\overline e_\eta^0$ the probability measure obtained by normalizing $\overline e_\eta$. Fix $\eta\in\mathscr T_n$. Let $\zeta_0=\eta$. Assume that $\zeta_i$ is sampled, we then choose $K_i$ from ${\rm range}(\zeta_i)$ according to $\overline e_{\zeta_i}^0$, and then sample $\zeta_{i+1}$ according to $\overline{\mu}_{K_i,\zeta_i}$. This way, $(K_i)_{i\ge0}$ and $(\zeta_i)_{i\ge0}$ are recursively defined. We denote the probability measure related to $(\zeta_i,K_i)$ by $\overline Q^\eta$. Using the LCLT, we are able to prove the following result:
\begin{proposition}\label{lclt}
    For all $\alpha\in\mathbb N$ in (\ref{eq:intro_alpha}), there are constants $C_{11}=C_{11}(\alpha),C_{12}=C_{12}(\alpha)$ such that for all choices $M>C_{11}$ and $\theta_1,\theta_2<C_{12}$, the following holds. There exists $C_{13}=C_{13}(k,K)$ such that for all large $n$ and all $\eta\in \mathscr T_n$ staying inside $[0,R_n)^d$,
    $$\overline Q^{\eta}[K_{\alpha-1}\in \widetilde B_{R_n\cdot e}]\geq C_{13}\alpha^{-d/2}.$$
\end{proposition}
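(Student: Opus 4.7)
The plan is to apply a local central limit theorem (LCLT) to the Markov chain $(K_i)_{i=0}^{\alpha-1}$. Heuristically, since each $\zeta_{i+1}$ is a random walk trajectory of duration $T_{i+1}\in[kR_n^2,KR_n^2]$ started at $K_i$, and $K_{i+1}$ is selected on its range, the increment $K_{i+1}-K_i$ has mean zero and variance of order $R_n^2$. After $\alpha-1$ such steps, $K_{\alpha-1}-K_0$ should be approximately Gaussian with covariance of order $\alpha R_n^2\cdot\mathrm{Id}$; since $\widetilde B_{R_n\cdot e}$ has radius $R_n/4$ and lies at distance of order $R_n$ from $K_0\in{\rm range}(\eta)\subset[0,R_n)^d$ (well within one standard deviation for $\alpha$ large), the target probability should be of order $R_n^d/(\alpha R_n^2)^{d/2}=\alpha^{-d/2}$.

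First I would decouple consecutive trajectories. Iteratively applying Lemma \ref{lem:forget} along the chain $\zeta_0\to\zeta_1\to\cdots\to\zeta_{\alpha-1}$, at cumulative cost at most $\alpha\cdot(2\theta_1\vee R_n^{-c/2})$, one can couple each conditioned path $\zeta_{i+1}\sim\overline\mu_{K_i,\zeta_i}$ with $K_i+X^{(i+1)}[0,T_{i+1}]$, where $X^{(i+1)}$ is an independent simple random walk and $T_{i+1}$ is an independent length drawn from $\rho_n$ restricted to $[kR_n^2,KR_n^2]$ and further conditioned on typicality. Choosing $\theta_1<C_{12}(\alpha)$ small enough makes this cumulative error much smaller than the target $\alpha^{-d/2}$.

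Next I would establish a single-step lower bound for the transition density: $\overline Q^\eta[K_{i+1}=y\mid K_i]\ge c\,R_n^{-d}$ for $y$ in some ball of radius of order $R_n$ around $K_i$. This would follow by combining the LCLT for the simple random walk \cite[Theorem 1.2.1]{trove.nla.gov.au/work/15980712} with the observations that, on a typical trajectory, the $*$-proper part $\widehat\zeta_{i+1}$ contains a definite fraction of ${\rm range}(\zeta_{i+1})$ (by typicality conditions $\mathcal E_2$ and $\mathcal E_3$ together with the definition \eqref{eq:defetahat} of $\widehat\zeta$), and the normalized equilibrium measure $\overline e^0_{\zeta_{i+1}}$ places mass of order $(1+\log\mu_1(\rho_n)\mathbbm{1}_{d=4})R_n^{-2}$ at each such vertex (via Lemma \ref{M_apriori}). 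For typicality not to exclude too much probability uniformly in $i$, one takes $M>C_{11}(\alpha)$ large.

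Finally, convolving these single-step lower bounds $\alpha-1$ times and invoking a Gaussian-type LCLT would yield $\overline Q^\eta[K_{\alpha-1}=y]\ge c'R_n^{-d}\alpha^{-d/2}$ uniformly on a box of radius $\kappa\sqrt\alpha R_n$ around $K_0$, which contains the target $\widetilde B_{R_n\cdot e}$ once $\alpha$ is sufficiently large. Summing over the $\asymp R_n^d$ lattice points in $\widetilde B_{R_n\cdot e}$ gives the claimed lower bound $C_{13}\alpha^{-d/2}$. The hard part will be the single-step density lower bound of the third step: the equilibrium-measure sampling is not equivalent to uniform sampling over time indices of the underlying SRW trajectory, and the typicality restriction performs a nontrivial truncation of the $\rho_n$-mass. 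Uniform control of both sources of non-Gaussianity, as well as of the $K_i$-dependence of the lower bound, requires careful simultaneous use of Lemma \ref{lem:forget}, Lemma \ref{M_apriori}, and the typicality conditions $\mathcal E_1$-$\mathcal E_4$.
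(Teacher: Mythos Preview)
Your heuristic is right, and your first decoupling step (iterating Lemma \ref{lem:forget} to remove the dependence of $\zeta_{i+1}$ on the parent trajectory $\zeta_i$) essentially matches the paper's Lemma \ref{branch_coupling2}. The gap is in your third step. A pointwise lower bound $p(K_i,\cdot)\ge c\,R_n^{-d}\mathbf 1_{B(K_i,R_n)}$ can only be propagated by writing $p^{*(\alpha-1)}\ge c^{\alpha-1}\bigl(R_n^{-d}\mathbf 1_{B(0,R_n)}\bigr)^{*(\alpha-1)}$ and then applying the LCLT to the uniform convolution; this yields a bound of the form $c^{\alpha-1}\alpha^{-d/2}$, not $C_{13}\alpha^{-d/2}$. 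The statement requires $C_{13}=C_{13}(k,K)$ to be \emph{independent of $\alpha$}, and this is essential downstream: in the proof of Proposition \ref{Y_alpha} one chooses $\alpha$ large so that $(1+\epsilon/8)^\alpha$ beats $\alpha^{d/2}/C_{13}$, which fails if $C_{13}$ decays exponentially in $\alpha$. You cannot ``invoke a Gaussian-type LCLT'' on a sequence of lower bounds; the LCLT is a statement about the full step distribution.

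What the paper does after the first decoupling is the missing idea. It introduces a second auxiliary process $(K_i^\square)$ in which the point on each trajectory is sampled \emph{uniformly over time indices} rather than via the equilibrium measure; Lemma \ref{branch_coupling3} shows this change of sampling rule moves $K_{\alpha-1}$ by at most $R_n/16$ with high probability. The payoff is that $(K_i^\square)$ is then exactly a single simple random walk $X$ evaluated at the random times $S_i=\tau_1+\cdots+\tau_i$ with $\tau_j$ i.i.d.\ supported in $[0,KR_n^2]$ (Remark \ref{re:Ksquare}). Since $S_{\alpha-1}\in[\tfrac14 kR_n^2,\alpha KR_n^2]$ with probability at least $1/4$, one applies the LCLT to $X_j$ once, uniformly over $j$ in this range, and reads off the bound $\ge C_{13}\alpha^{-d/2}$ with $C_{13}$ depending only on $k,K$. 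The replacement of equilibrium-measure sampling by uniform-time sampling is precisely the device that collapses the $(\alpha-1)$-fold convolution into a single random-walk estimate and avoids the exponential loss in $\alpha$.
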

To see the connection between $((K_i)_{i\ge0},(\zeta_i)_{i\ge0})$ and $(Z_i)_{i\ge0}$, note the following fact: if we condition on the number of descendants of every trajectory in the process $(\overline Y_i)_{i\geq0}$, then the distribution of an arbitrary element of $Z_{\alpha-1}$ is the same as $K_{\alpha-1}$ under $\overline Q^\eta$ if that element can be traced back to $\eta$. In particular, for any $A\subset\mathbb Z^d$, we have
\begin{equation}\label{eq:stat_lclt}
    \overline E^\mathcal S[|Z_{\alpha-1}\cap A|]=\overline E^\eta[|Z_{\alpha-1}|]\cdot\sum_{\eta\in\mathcal S}\overline Q^\eta[K_{\alpha-1}\in A].
\end{equation}
With the above observation and Proposition \ref{lclt}, the proof of Proposition \ref{Y_alpha} is straightforward, which can be found at the end of this subsection. 

We now aim to prove Proposition \ref{lclt}. Note that it is impossible to apply the LCLT to $(K_i)_{i\ge0}$ directly, because the distribution of $K_{i+1}$ depends on not only $K_i$ but also information offered by $\zeta_i$. To overcome this obstacle, we couple $((\zeta_i)_{i\geq0},(K_i)_{i\geq0})$ with its simplified version $((\zeta_i^\square)_{i\geq0},(K_i^\square)_{i\geq0})$ such that $(K_i^\square)_{i\geq0}$ can be viewed as a sequence of points on a simple random walk on $\mathbb Z^d$ to which we apply the LCLT.

Recall the definition of $\mu_x,x\in\mathbb Z^d$. We define the processes $((\zeta_i^\square)_{i\geq0},(K_i^\square)_{i\geq0})$ as follows. For $\eta\in W^{[0,\infty)}$, let $\zeta_0^\square=\eta$. Choose $K_0^\square$ from ${\rm range}(\zeta_0^\square)$ according to $e_{\zeta_0^\square}^0$. Assume that $K_i^\square$ is sampled, we sample $\zeta_{i+1}^\square$ according to $\mu_{K_i^\square}$, uniformly choose $j$ from $\{0,1,...,T(\zeta_{i+1}^\square)\}$, and let $K_{i+1}^\square=\zeta_{i+1}^\square(j)$ . We denote the probability measure related to $((\zeta_i^\square)_{i\geq0},(K_i^\square)_{i\geq0})$ by $\widetilde Q_\eta$.
    
\begin{remark}\label{re:Ksquare}
    $(K_i^\square)_{i\ge0}$ can be equivalently constructed in the following way. Note that the increment $K_{i+1}^\square-K_i^\square,i\ge0$ are i.i.d.\ and their distribution can be seen as that of $X_\tau$, where $\tau$ is a random time and $X\sim P^{\bm0}$ is independent of $\tau$. For $m,l\in\mathbb N$, let $\sigma_m^*$ be the uniform distribution on $\{0,1,...,m\}$, and let $\sigma_{m,l}$ be the distribution of $|\xi-m|$ where $\xi\sim\sigma_{m+l}^*$. Consider the probability measure
    $$\sigma=C\cdot\sum_{kR_n^2\le m+l\le KR_n^2}\rho_n(m+l)\cdot\sigma_{m,l}$$
    where $C=\sum_{kR_n^2\le m\le KR_n^2}(m+1)\rho_n(m)$ is a normalizing constant, then is not hard to verify that $\sigma$ is the distribution of $\tau$ mentioned above. Define the law of a simple random walk with starting point chosen from ${\rm range}(\eta)$ according to its normalized equilibrium measure by $$P^\eta\coloneq\sum_{x\in{\rm range}(\eta)}e_\eta^0(x)\cdot P^x.$$ Let $X\sim P^\eta$ and $\tau_i\sim\sigma,i\ge1$ be mutually independent, and write $S_i=\sum_{j=1}^i\tau_j$. Then, by previous discussion, $(X_{S_i})_{i\ge0}$ has the same distribution as $(K_i^\square)_{i\ge0}$ under $\widetilde Q^\eta$.
\end{remark}

Apart from the simplified processes $((\zeta_i^\square)_{i\geq0},(K_i^\square)_{i\geq0})$, we further introduce a pair of auxiliary processes $((\zeta_i^\diamondsuit)_{i\geq0},(K_i^\diamondsuit)_{i\geq0})$. For $\eta\in W^{[0,\infty)}$, let $\zeta_0^\diamondsuit=\eta$. Assume that $\zeta_i^\diamondsuit$ is sampled, we then choose $K_i^\diamondsuit$ from ${\rm range}(\zeta_i^\diamondsuit)$ according to $e_{\zeta_i^\diamondsuit}^0$, and then sample $\zeta_{i+1}^\diamondsuit$ according to $\mu_{K_i^\diamondsuit}$. We denote the probability measure related to $(\zeta_i^\diamondsuit,K_i^\diamondsuit)$ by $Q^\eta$.
\begin{remark}
    Similar to $(\zeta_i^\square)_{i\ge0}$, $(\zeta_i^\diamondsuit)_{i\ge0}$ becomes a random walk on $\mathbb Z^d$ once $\zeta_0^\diamondsuit$ is fixed.
\end{remark}

Recall $l_n$ from Lemma \ref{lem:forget}. Recall parameters $M,\theta_1$ introduced in (\ref{eq:intro_M}) and (\ref{eq:intro_theta2}) respectively for the definition of typical trajectories and $\theta_2$ introduced in (\ref{eq:intro_theta1}) for the definition of *-proper part. The major ingredient in the proof of Proposition \ref{lclt} is the following two coupling results. 
\begin{lemma}\label{branch_coupling2}
    For all $\alpha\in\mathbb N$ and $r\in(0,1)$, there exist constants $C_{14}=C_{14}(\alpha,r)$ and $C_{15}=C_{15}(\alpha,r)$, such that for all $M>C_{14}$ and $\theta_1,\theta_2<C_{15}$, all large $n$ and all $\eta\in\mathscr T_n$, we can couple $(K_i)$ with $(K^\diamondsuit_i)$ such that $d(K_i,K_i^\diamondsuit)<(i+1)\cdot 2l_n$, $i=0,1,...,\alpha-1$ with probability at least $1-r$, where $((\zeta_i)_{i\geq0},(K_i)_{i\geq0})$ (resp. $((\zeta_i^\diamondsuit)_{i\geq0},(K_i^\diamondsuit)_{i\geq0})$) follows the law under $\overline Q^\eta$ (resp. $Q^\eta$).
\end{lemma}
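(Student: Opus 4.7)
The proof proceeds by induction on $i \in \{0, 1, \ldots, \alpha - 1\}$. At each step we first couple the trajectory pair $(\zeta_{i+1}, \zeta_{i+1}^\diamondsuit)$ using Lemma~\ref{lem:forget}, which makes them into approximate spatial translates of each other, and then couple the hitting points $(K_{i+1}, K_{i+1}^\diamondsuit)$ by transporting this translation to their respective equilibrium measures. Each inductive step is designed to fail with probability at most $r/\alpha$, so that summing over the $\alpha$ rounds gives the advertised error $r$; the distance $d(K_i, K_i^\diamondsuit)$ will grow by at most $2l_n$ per round, which matches the claimed bound since $K_0 = K_0^\diamondsuit$ with high probability.

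For the base case $i = 0$, the trajectories $\zeta_0 = \zeta_0^\diamondsuit = \eta$ coincide, and we couple $K_0 \sim \overline e_\eta^0$ with $K_0^\diamondsuit \sim e_\eta^0$ via the maximal coupling; this succeeds with probability $e_\eta(\widehat\eta)/\mathrm{cap}(\eta)$, which can be made arbitrarily close to $1$ by taking $\theta_1, \theta_2$ small, thanks to Lemma~\ref{M_apriori} combined with the typicality bound $\mathrm{cap}(\eta) \in [1-\theta_1^2, 1+\theta_1^2]\cdot \varepsilon_d T$ (up to logarithmic factors in $d=4$). For the inductive step, set $v := K_i^\diamondsuit - K_i$, with $|v| < (i+1)\cdot 2l_n$ by hypothesis. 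Since $\zeta_i \in \mathscr T_n$ and $K_i \in \widehat{\zeta_i}$ by construction of $\overline Q^\eta$, we may apply Lemma~\ref{lem:forget} separately to the forward and backward halves of the walk used to sample $\zeta_{i+1} \sim \overline\mu_{K_i,\zeta_i}^*$ to produce a free walk $\tilde\zeta \sim \mu_{K_i}$ coupled with $\zeta_{i+1}$ such that $|\tilde\zeta(s) - \zeta_{i+1}(s)| \le l_n$ at every time $s$ and such that $\tilde\zeta$ and $\zeta_{i+1}$ are exact spatial translates outside a burn-in window of duration $2t_n$ centered at $K_i$. The estimate $\mu_{K_i}(\mathscr T_n) \wedge \overline\mu^*_{K_i,\zeta_i}(\mathscr T_n) > 1 - \epsilon/8$ from Lemma~\ref{M_apriori} implies that the additional conditioning on $\mathscr T_n$ needed to pass from $\overline\mu^*$ to $\overline\mu$ distorts probabilities only by a bounded factor, so the coupling survives. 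Translating $\tilde\zeta$ by $v$ yields $\zeta_{i+1}^\diamondsuit \sim \mu_{K_i^\diamondsuit}$, and outside the burn-in window $\zeta_{i+1}^\diamondsuit$ is an exact translate of $\zeta_{i+1}$ by some vector $w$ with $|w| \le |v| + l_n$. We then couple $K_{i+1} \sim \overline e_{\zeta_{i+1}}^0$ and $K_{i+1}^\diamondsuit \sim e_{\zeta_{i+1}^\diamondsuit}^0$ by sampling $K_{i+1}$ first and setting $K_{i+1}^\diamondsuit := K_{i+1} + w$, composed with a further maximal coupling between $\overline e^0$ and $e^0$; this gives $d(K_{i+1}, K_{i+1}^\diamondsuit) \le |w| < (i+2)\cdot 2l_n$, completing the inductive step.

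The main technical obstacle lies in controlling two sources of error that accumulate at each step. First, the translation between $\zeta_{i+1}$ and $\zeta_{i+1}^\diamondsuit$ is exact only outside the burn-in window near $K_i$, so samples $K_{i+1}$ landing inside that window destroy the coupling; we will show this window has diameter of order $t_n^{1/2}\log^{1/2} R_n$ and hence carries equilibrium mass negligible relative to $\mathrm{cap}(\zeta_{i+1}) \asymp \widetilde C_n$, via (\ref{eq:cap_ball}) and standard Gaussian estimates. Second, the restricted equilibrium measure $\overline e^0$ and its unrestricted counterpart $e^0$ must match up to small error, which requires both that $\zeta_{i+1}^\diamondsuit \in \mathscr T_n$ with high probability and that $e_\zeta(\widehat\zeta)/\mathrm{cap}(\zeta)$ be close to $1$; both follow from Lemma~\ref{M_apriori} upon choosing $\theta_1, \theta_2$ sufficiently small depending on $\alpha$ and $r$, and from the fact that once $\zeta_{i+1}^\diamondsuit$ is typical the definitions of $\widehat{\zeta_{i+1}^\diamondsuit}$ and $\widehat{\zeta_{i+1}}$ are compatible under the translation $w$. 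Since $\alpha$ is fixed while $l_n = o(R_n)$, iterating this single-step bound over the $\alpha$ rounds yields the claim.
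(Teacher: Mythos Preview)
Your overall architecture matches the paper's: an inductive coupling in which each step costs $r/\alpha$ in probability and at most $2l_n$ in displacement. The base case and the invocation of Lemma~\ref{lem:forget} to align $\zeta_{i+1}$ with a free trajectory are also the right moves. However, there are two genuine gaps in the inductive step.

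First, a minor point: there is no single translation vector $w$. Lemma~\ref{lem:forget} is only needed for the \emph{backward} half of $\overline\mu_{K_i,\zeta_i}^*$ (the forward half is already an unconditioned walk from $K_i$), and on the good event the forward pieces satisfy $\zeta_{i+1}[t^*,T]=\tilde\zeta[t^*,T]$ exactly while the backward pieces satisfy $\zeta_{i+1}[0,t^*-t_n]\simeq\tilde\zeta[0,t^*-t_n]$ with a shift of magnitude at most $l_n$. After your further translation by $v=K_i^\diamondsuit-K_i$, the forward and backward parts of $\zeta_{i+1}^\diamondsuit$ are translates of the corresponding parts of $\zeta_{i+1}$ by \emph{different} vectors (namely $v$ and $v+w'$ with $|w'|\le l_n$). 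This is why the paper's conclusion~(\ref{eq:TV_KKdiamond}) is stated as a disjunction rather than a single shift, and why the paper first uses the i.i.d.\ structure of the $\diamondsuit$-increments to reduce WLOG to $v=0$.

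Second, and more seriously: your coupling ``sample $K_{i+1}$ and set $K_{i+1}^\diamondsuit:=K_{i+1}+w$'' implicitly assumes that $e_{\zeta_{i+1}}$ restricted to the region outside the burn-in window coincides (up to translation) with $e_{\zeta_{i+1}^\diamondsuit}$ there. This is false: the equilibrium measure is a global object, and for $x$ outside the burn-in window the value $e_{\zeta_{i+1}}(x)=P^x[\widetilde H_{\zeta_{i+1}}=\infty]$ still depends on the burn-in portion of $\zeta_{i+1}$, which differs from that of $\zeta_{i+1}^\diamondsuit$. Your proposed fix---showing the burn-in window carries negligible equilibrium mass---only controls ${\rm cap}(\zeta_m)$ and the probability that $K_{i+1}$ lands there; it does not control the discrepancy between $e_{\zeta_{i+1}}$ and $e_{\zeta_{i+1}^\diamondsuit}$ on the complement. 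The paper handles this by passing through the auxiliary measure $e_\zeta^*:=e_{\zeta_-}+e_{\zeta_+}$, treating the forward and backward pieces as \emph{separate} sets; since $\zeta_-\simeq\zeta_-^\diamondsuit$ and $\zeta_+=\zeta_+^\diamondsuit$ individually, one has $e_\zeta^*$ and $e_{\zeta^\diamondsuit}^*$ exactly related by the piecewise translation. The nontrivial step is then to show $d_{\rm TV}(e_\zeta,e_\zeta^*)\le C(\theta_1+\theta_2)\widetilde C_n$, and this requires the near-additivity estimate
\[
{\rm cap}(\zeta_-)+{\rm cap}(\zeta_+)-{\rm cap}(\zeta)\le C\theta_1^2\widetilde C_n,
\]
which is precisely where the typicality condition $\mathcal E_4$ (uniform control on the capacity of short sub-pieces of length $T_n'$) combined with the lower bound from $\mathcal E_2$ enters; see (\ref{eq:c+c-c}) and (\ref{eq:align}). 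Your sketch never invokes $\mathcal E_4$ and does not contain an argument that could substitute for it.
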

\begin{proof}
     We recursively construct a coupling $\mathbb Q$ satisfying
     \begin{equation}\label{eq:K_0}
        \mathbb Q[K_0=K_0^\diamondsuit]>1-r/\alpha
    \end{equation}
    and
    \begin{equation}\label{eq:K_i}
        \mathbb Q[d(K_{i+1}-K_i,K_{i+1}^\diamondsuit-K_i^\diamondsuit)<2l_n\mid\mathcal F_i]
    \end{equation}
    for all $0\le i<\alpha-1$,
    where 
    $$\mathcal F_i=\sigma\{(\zeta_j)_{0\le j\le i},(\zeta_j^\diamondsuit)_{0\le j\le i},(K_j)_{0\le j\le i},(K_j^\diamondsuit)_{0\le j\le i}\},$$
    and obtain $d(K_i,K_i^\diamondsuit)<i\cdot 2l_n$ by the triangle inequality. To start with, for $i_0<\alpha-1$, we assume that we have coupled $\left((\zeta_j)_{0\le j<i_0},(K_j)_{0\le j<i_0}\right)$ with $\left((\zeta_j^\diamondsuit)_{0\le j<i_0},(K_j^\diamondsuit)_{0\le j<i_0}\right)$ such that we have (\ref{eq:K_0}) and (\ref{eq:K_i}) for $0\le i< i_0$, and construct $\zeta_{i_0+1},K_{i_0+1},\zeta_{i_0+1}^\diamondsuit,K_{i_0+1}^\diamondsuit$ satisfying (\ref{eq:K_i}) for $i=i_0$. As alluded to in the above remark, the distribution of $K_{i_0+1}^\diamondsuit-K_{i_0}^\diamondsuit$ is irrelevant to $K_{i_0}^\diamondsuit$. Therefore, we can assume without loss of generality that $K_{i_0}^\diamondsuit=K_{i_0}$, and it suffices to prove the following result in a simpler setup: there exist constants $C_{14}(\alpha,r)$ and $C_{15}(\alpha,r)$ such that for all $M>C_{14}$, $\theta_1,\theta_2<C_{15}$ and all $\eta\in\mathscr T_n$, $x\in\widehat{\eta}$, there exists a coupling $\mathbb{Q}$ of random trajectories $\zeta^\diamondsuit\sim\mu_x,\zeta\sim\overline{\mu}_{x,\eta}$, and there are $K^\diamondsuit\sim e_{\zeta^\diamondsuit}^0$ under $\mathbb{Q}[\cdot|\zeta^\diamondsuit]$ and $K^*\sim\overline e_{\zeta}^0$ under $\mathbb{Q}[\cdot|\zeta]$ such that
    \begin{equation}\label{5.5}
        \mathbb{Q}[d(K,K^\diamondsuit)>2l_n]<r/\alpha.
    \end{equation}
    
    By the definition of $\overline\mu_{x,\eta}^*$ and $\mu_x$ and Lemma \ref{lem:forget}, for sufficiently small $\theta_2$ and $\theta_1$, we can couple $\zeta_1\sim\overline\mu_{x,\eta}^*$ with $\zeta^\diamondsuit\sim\mu_x$ such that with probability at least $1-r/8\alpha$, \begin{equation}\label{eq:same_length}
        T(\zeta_1)=T(\zeta^\diamondsuit)\mbox{ and }\zeta_1(t^*)=\zeta^\diamondsuit(t^*)=x
    \end{equation}
    almost surely, where $t^*=\tau_\eta^{\zeta_1}$ (c.f. (\ref{eq:def_hitting})), and \begin{equation}\label{eq:close}
        \begin{aligned}
            &\zeta_1[0,(t^*-t_n)\vee0]\simeq\zeta^\diamondsuit[0,(t^*-t_n)\vee0],\zeta_1[t^*,T(\zeta_1)]\simeq\zeta^\diamondsuit[t^*,T(\zeta^\diamondsuit)]\\&\mbox{and }|\zeta_1(s)-\zeta^\diamondsuit(s)|\leq l_n,\forall 0\le s\le T(\zeta_1).
        \end{aligned}
    \end{equation}
     Denote the event that (\ref{eq:same_length}) and (\ref{eq:close}) both hold by $E(\zeta_1,\zeta^\diamondsuit)$.

    Similar to (\ref{eq:mu(Tn)_lower}), there exist constants $C_1^*(\alpha,r)$ and $C_{14}(\alpha,r)$ such that for all $\theta_1,\theta_2<C_1^*$ and $M>C_{14}$, a trajectory following $\mu_x$ or $\overline\mu_{x,\eta}^*$ is typical with probability at least $1-r/8\alpha$ as $n\to\infty$, which implies that $d_\text{TV}(\overline\mu_{x,\eta},\overline\mu_{x,\eta}^*)<r/8\alpha$. In that case, the coupling mentioned in the previous paragraph still works if we replace $\zeta_1\sim\overline\mu_{x,\eta}^*$ by $\zeta\sim\overline\mu_{x,\eta}$, and the similarly defined event $E(\zeta,\zeta^\diamondsuit)$ happens with probability at least $1-r/4\alpha$.

    Define an event
    $$E'\coloneq E(\zeta,\zeta^\diamondsuit)\cap\{{\rm cap}(\zeta^\diamondsuit)>(1-\theta_1^2)h_nT(\zeta^\diamondsuit)\},$$
    where 
    $$h_n=\varepsilon_d(1+\log\mu_1(\rho_n)\mathbbm1_{d=4})^{-1}.$$
    Then, by (\ref{eq:cap_con}), $\mathbb Q[E']\ge 1-r/2\alpha$ for all large $n$.
    Now, on $E'$, we aim to couple $K$ and $K^\diamondsuit$ conditionally on $\zeta$ and $\zeta^\diamondsuit$. Let $T=T(\zeta)=T(\zeta^\diamondsuit)$. Write $t^*=t(\zeta,\eta)$, $\zeta_-=\zeta[0,(t^*-t_n)\vee0]$, $\zeta_{\rm m}=\zeta[(t^*-t_n)\vee0,t^*]$ and $\zeta_+=\zeta[t^*,T]$. Define $e_\zeta^*=e_{\zeta_-}+e_{\zeta_+}$, and let $e_\zeta^{**}$ be the restriction of $e_\zeta$ to ${\rm range}(\zeta_-)\cup{\rm range}(\zeta_+)$. Define $\zeta_\pm^\diamondsuit,\zeta_{\rm m}^\diamondsuit,e_{\zeta^\diamondsuit}^*$ and $e_{\zeta^\diamondsuit}^{**}$ similarly. Note that $\zeta$ is always typical, so by definition, $\zeta\in\mathcal E_4$ can be divided into pieces of length $T_n'$(see \ref{eq:T'} for definition) with capacity no larger than $(1+\theta_1^2)h_nT_n'$, while at the same time we have ${\rm cap}(\zeta)\ge(1-\theta_1^2)h_nT$ by $\zeta\in\mathcal E_2$. If we divide $\zeta_-$ and $\zeta_+$ into the aforementioned pieces of length $T_n'$, at most one piece may be used twice, so the total length of all the pieces used in both coverings is at most $T+T_n'$. Hence, we have
    $${\rm cap}(\zeta_-)+{\rm cap}(\zeta_+)\le(1+\theta_1^2)h_n(T+T_n'),$$
    implying
    \begin{equation}\label{eq:c+c-c}
        {\rm cap}(\zeta_-)+{\rm cap}(\zeta_+)-{\rm cap}(\zeta)\le (1+\theta_1^2)h_n(T+T_n')-(1-\theta_1^2)h_nT<C\theta_1^2\widetilde C_n.
    \end{equation}
    Now, we claim that conditionally on $\zeta,\zeta^\diamondsuit$ with restriction on $E'$, we can couple $K^*$ and $K^\diamondsuit$ such that
    \begin{equation}\label{eq:TV_KKdiamond}
        \mathbb Q[K^*=K^\diamondsuit\in{\rm range}(\zeta_-)\mbox{ or }K^*-K^\diamondsuit=\zeta(t^*)-\zeta^\diamondsuit(t^*)\mid\zeta,\zeta^\diamondsuit]>1-C(\theta_1+\theta_2)>1-r/2\alpha,
    \end{equation}
    where the second inequality holds for $\theta_1,\theta_2$ small. Then, (\ref{5.5}) follows since $\mathbb Q[E']>1-r/2\alpha$. 
    
    Now we aim to prove (\ref{eq:TV_KKdiamond}). Recall the restriction of equilibrium measure $\overline e_\zeta$ from (\ref{eq:def_overline_e}). Note the following inequalities:
    \begin{equation}\label{eq:align}
        \begin{aligned}
        d_{\rm TV}(\overline e_\zeta,e_\zeta)\le\;& C(\theta_1+\theta_2)\widetilde C_n;\\
        d_{TV}(e_\zeta,e_\zeta^{**})\le\;& t_n;\\
        d_{\rm TV}(e_\zeta^{**},e_\zeta^*)=\;&{\rm cap}(\zeta_-)+{\rm cap}(\zeta_+)-e_\zeta(\zeta_-\cup\zeta_+)\\
        \le\;&\left({\rm{\rm cap}(\zeta_-)+{\rm cap}(\zeta_+)-{\rm cap}(\zeta)}\right)+{\rm cap}(\zeta_{\rm m})\\
        \le\;&C\theta_1^2\widetilde C_n+t_n.
        \end{aligned}
    \end{equation}
    Here, the first inequality holds, since by the definition of $\widehat\zeta$,
    \begin{equation*}
        {\rm d}_{\rm TV}(\overline e_\zeta,e_\zeta)=e_\zeta({\rm range}(\zeta)\setminus\widehat\zeta)\le\begin{cases}\theta_2 T(\zeta),&d\ge5;\\\theta_2T(\zeta)\log^{-1}\mu_1(\rho_n)+\theta_1^{-1}\left({\rm cap}(\zeta,R_n^2)-{\rm cap}(\zeta)\right),&d=4\end{cases}
    \end{equation*}
    which, combined with the requirements of $\mathcal E_1$ and $\mathcal E_2$ in the definition of typical trajectories, yields
    \begin{equation}\label{eq:equi_setminus}
            {\rm d}_{\rm TV}(\overline e_\zeta,e_\zeta)=e_\zeta({\rm range}(\zeta)\setminus\widehat\zeta)\le C(\theta_1+\theta_2)\widetilde C_n
    \end{equation}
    Summing up the inequalities aligned in (\ref{eq:align}), we get
    $$d_{\rm TV}(\overline e_\zeta,e_\zeta^*)<C(\theta_1+\theta_2)\widetilde C_n$$
    since $\theta_1<1$ and $t_n\ll\widetilde C_n$. Note that 
    $$e_\zeta^*({\rm range}(\zeta))={\rm cap}(\zeta_-)+{\rm cap }(\zeta_+)\ge{\rm cap}(\zeta)-t_n\ge Ck\widetilde C_n$$ 
    for some constant $C$ and all large $n$. Hence,
    \begin{equation}\label{eq:add0}
        d_{\rm TV}(\overline e_\zeta^0,(e_\zeta^*)^0)=d_{\rm TV}(\overline e_\zeta^0,\frac1{e_\zeta^*(\zeta)}\cdot\overline e_\zeta)+\frac1{e_\zeta^*(\zeta)}\cdot d_{\rm TV}(\overline e_\zeta,e_\zeta^*)<C(\theta_1+\theta_2),
    \end{equation}
    where the constant $C=C(d,k)$.
     Similarly,
    \begin{equation}
        d_{\rm TV}(e_{\zeta^{\diamondsuit}}^0,(e^*_{\zeta^\diamondsuit})^0)<C(\theta_1+\theta_2).
    \end{equation}
    Thus, we can couple $K^*$ with $K_a\sim(e_\zeta^*)^0$ and $K^\diamondsuit$ with $K_b\sim(e^*_{\zeta^\diamondsuit})^0$ such that the probability of $\{K^*= K_a\}$ and $\{K^\diamondsuit=K_b\}$ given $\zeta,\zeta^\diamondsuit$ are both at least $1-C(\theta_1+\theta_2)$. 
    (\ref{eq:TV_KKdiamond}) then follows since $\zeta_-\simeq\zeta_-^\diamondsuit$ and $\zeta_+=\zeta_+^\diamondsuit$, which yields that $K_a$ and $K_b$ can be coupled so that $$K_a=K_b\in{\rm range}(\zeta_-)\mbox{ or }K_a-K_b=\zeta(t^*)-\zeta^\diamondsuit(t^*)$$ almost surely.

    It remains to construct $K_0$ and $K_0^\diamondsuit$ satisfying (\ref{eq:K_0}). To see this, note that $\eta\in \mathscr T_n$, and by a similar argument as above we have
    \begin{equation}\label{eq:TV_initial}
        d_{\rm TV}(e_\eta^0,\overline e_\eta^0)<C(\theta_1+\theta_2)<r/\alpha
    \end{equation}
    for all small $\theta_1,\theta_2$. To conclude the proof, note that we can pick some $C_{13}(\alpha,r)\in(0,C_1^*(\alpha,r))$ such that whenever $\theta_1,\theta_2<C_{13}$, (\ref{eq:TV_KKdiamond}) and (\ref{eq:TV_initial}) both hold.
\end{proof}

\begin{lemma}\label{branch_coupling3}
    Arbitrarily choose $\alpha\in\mathbb N$ and $r\in(0,1)$. Then, for all large $n$ and all $\eta\in W^{[0,\infty)}$,
    we can couple $(K_i^\diamondsuit)_{i\ge0}$ with $(K_i^\square)_{i\ge0}$ such that $d(K_{\alpha-1}^\square,K_{\alpha-1}^\diamondsuit)<R_n/16$, $i=0,1,...,\alpha-1$ with probability at least $1-r$, where $((\zeta_i^\square)_{i\geq0},(K_i^\square)_{i\geq0})$ (respectively, $((\zeta_i^\diamondsuit)_{i\geq0},(K_i^\diamondsuit)_{i\geq0})$) follows the law under $\widetilde Q^\eta$ (respectively, $Q^\eta$).
\end{lemma}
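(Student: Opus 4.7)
The plan is to couple $(K_i^\square)_{i\ge 0}$ and $(K_i^\diamondsuit)_{i\ge 0}$ step-by-step. I would first observe that both chains have the common initial law $e_\eta^0$, so $K_0^\square = K_0^\diamondsuit$ can be set directly. By Remark \ref{re:Ksquare}, $(K_i^\square - K_0^\square)_{i\ge 0}$ is a random walk on $\mathbb Z^d$ with explicit i.i.d.\ increments, and by translation invariance of $\mu_x$, the process $(K_i^\diamondsuit - K_0^\diamondsuit)_{i\ge 0}$ also has i.i.d.\ increments. My goal is to couple each of the $\alpha - 1$ steps so that the per-step error is at most $R_n/(32\alpha)$ with failure probability at most $r/\alpha$; a union bound together with the triangle inequality then yields $d(K_{\alpha-1}^\square, K_{\alpha-1}^\diamondsuit) < R_n/16$ with probability at least $1-r$.

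For the per-step coupling, given the current positions $K_i^\square$ and $K_i^\diamondsuit$, I would jointly sample $\zeta_{i+1}^\square \sim \mu_{K_i^\square}$ and $\zeta_{i+1}^\diamondsuit \sim \mu_{K_i^\diamondsuit}$ from a shared underlying simple random walk with a common split $(m,l)$, so that $\zeta_{i+1}^\square(s) - \zeta_{i+1}^\diamondsuit(s) = K_i^\square - K_i^\diamondsuit$ for all $s \in \{0,\dots,T\}$ with $T = m+l$. Then I would set $K_{i+1}^\square = \zeta_{i+1}^\square(J^\square)$ for $J^\square$ uniform on $\{0,\dots,T\}$, and $K_{i+1}^\diamondsuit = \zeta_{i+1}^\diamondsuit(J^\diamondsuit)$, where $J^\diamondsuit$ is constructed by first drawing $Y \sim e_{\zeta_{i+1}^\diamondsuit}^0$ and then taking $J^\diamondsuit$ uniform over the visit times $\{t : \zeta_{i+1}^\diamondsuit(t) = Y\}$. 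This construction indeed yields $K_{i+1}^\diamondsuit \sim e_{\zeta_{i+1}^\diamondsuit}^0$ as required, and the induced marginal density of $J^\diamondsuit$ on $\{0,\dots,T\}$ is proportional to $e_{\zeta_{i+1}^\diamondsuit}(\zeta_{i+1}^\diamondsuit(t))/N_{\zeta_{i+1}^\diamondsuit}(\zeta_{i+1}^\diamondsuit(t))$, where $N_\zeta(y)$ denotes the visit count of $\zeta$ to $y$.

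For a typical $\zeta_{i+1}^\diamondsuit$ (which holds with probability tending to $1$ by Lemma \ref{le:good_trivial}), I would use the concentration results of Propositions \ref{trace_cap_origin_new} and \ref{trace_cap_truncated_new}, combined with standard LCLT estimates, to argue that this density is approximately uniform on $\{0,\dots,T\}$ in a suitably quantitative form. A quantile coupling would then yield $|J^\square - J^\diamondsuit| \le \delta T$ with probability at least $1 - r/\alpha$ for any fixed $\delta > 0$ and $n$ sufficiently large. Since $\zeta_{i+1}^\diamondsuit$ is a simple random walk, standard Gaussian estimates give $|\zeta_{i+1}^\diamondsuit(J^\square) - \zeta_{i+1}^\diamondsuit(J^\diamondsuit)| \le C\sqrt{\delta T} = C\sqrt{\delta}\,R_n$ with further probability close to $1$. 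Together with the translation relation this yields $|K_{i+1}^\square - K_{i+1}^\diamondsuit| \le |K_i^\square - K_i^\diamondsuit| + C\sqrt{\delta}\,R_n$, so choosing $\delta$ small enough depending on $\alpha$ closes the induction. The main obstacle will be establishing the near-uniformity of the $J^\diamondsuit$ density; this reduces to showing that for typical trajectories in dimension $d \ge 4$, both $e_\zeta(\zeta(t))$ and $N_\zeta(\zeta(t))$ are well-controlled at most times $t$, which I would attack by carefully combining the capacity concentration results from Section 2 with the LCLT.
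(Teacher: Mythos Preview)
Your high-level strategy matches the paper's exactly: both take $K_0^\square = K_0^\diamondsuit$ (same initial law $e_\eta^0$), reduce to a single-step coupling with error $O(R_n/\alpha)$ and failure probability $O(r/\alpha)$, exploit translation invariance of both increment laws to assume $K_i^\square = K_i^\diamondsuit$, and set $\zeta^\square = \zeta^\diamondsuit = \zeta \sim \mu_0$.

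The difference is in how the single-step coupling of $K^\diamondsuit \sim e_\zeta^0$ and $K^\square = \zeta(J^\square)$ is carried out. The paper partitions $\zeta$ into pieces $\xi_j$ of length $T'' = \lfloor \delta R_n^2\rfloor$, then uses capacity concentration (Proposition \ref{trace_cap_origin_new}) to show that $e_\zeta^0$ is close in total variation to $(\sum_j e_{\xi_j})^0$ and that the piece-selection weights ${\rm cap}(\xi_j)/\sum_l{\rm cap}(\xi_l)$ are close to $T(\xi_j)/T(\zeta)$; together with a diameter bound on each piece this forces $K^\diamondsuit$ and $K^\square$ into the same piece with high probability. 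Your route via the time index $J^\diamondsuit$ is morally equivalent but introduces the visit-count factor $1/N_\zeta(\zeta(t))$, which the paper avoids entirely. The claim that the density of $J^\diamondsuit$ is ``approximately uniform'' is not pointwise true (both $e_\zeta(\zeta(t))$ and $N_\zeta(\zeta(t))$ fluctuate, especially in $d=4$); what you actually need is that the CDF of $J^\diamondsuit$ is close to linear, i.e., that block sums $\sum_{t\in[jT'',(j+1)T'')} e_\zeta(\zeta(t))/N_\zeta(\zeta(t))$ concentrate. After handling the $N_\zeta$ factor via approximate disjointness of block ranges, this reduces to the same piece-capacity concentration the paper uses. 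Your final Gaussian step also implicitly requires a uniform bound $\sup_{|s-s'|\le\delta T}|\zeta(s)-\zeta(s')|\lesssim\sqrt{\delta}\,R_n$, which is again the paper's block-diameter event $E_2$. So your approach should go through, but the paper's piece-index coupling is cleaner precisely because it never touches $N_\zeta$.
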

\begin{proof}
    We recursively construct a coupling $\mathbb Q$ such that
    \begin{equation}\label{eq:K_0ds}
        \mathbb Q[K_0^\diamondsuit=K_0^\square]=1,
    \end{equation}
    and
    \begin{equation}\label{eq:K_ids}
        \mathbb Q[d(K_{i+1}^\diamondsuit-K_i^\square,K_{i+1}^\diamondsuit-K_i^\diamondsuit)< R_n/16\alpha\mid\mathcal F_i]>1-r/\alpha
    \end{equation}
    for all $i\ge0$, where 
    $$\mathcal F_i=\sigma\{(\zeta_j^\square)_{0\le j\le i},(\zeta_j^\diamondsuit)_{0\le j\le i},(K_j^\square)_{0\le j\le i},(K_j^\diamondsuit)_{0\le j\le i}\}.$$
    (\ref{eq:K_0ds}) is trivial because $K_0^\diamondsuit$ and $K_0^\square$ are identically distributed. Similar to Lemma \ref{branch_coupling2}, we work in a simpler setup to obtain (\ref{eq:K_ids}). Let $\zeta^\diamondsuit,\zeta^\square\sim\mu_0$. Conditionally on $\zeta^\diamondsuit$, choose $K^\diamondsuit$ according to $e_{\zeta^{\diamondsuit}}^0$; conditionally on $\zeta^\square$, let $K^\square=\zeta^\square(j)$, where $j$ is uniformly chosen from $\{0,1,...,T(\zeta^\square)\}$. In order to obtain (\ref{eq:K_ids}), it suffices to construct a coupling $\mathbb Q$ such that
    \begin{equation}\label{eq:Kds}
        \mathbb Q[d(K^\diamondsuit,K^\square)\ge R_n/16\alpha]<r/\alpha
    \end{equation}
    for all large $n$.
    First, we take $\zeta^\diamondsuit=\zeta^\square=\zeta\sim\mu_0$, and let 
    $$\xi_j=\zeta[jT'',((j+1)T''-1)\wedge T(\zeta)],0\le j\le N_1=\lfloor T(\zeta)/T''\rfloor,$$ 
    where $T''=\lfloor \delta R_n^2\rfloor$ and $\delta>0$ is a small but fixed number to be specified later.
    Recall $h_n$ from the proof of Lemma \ref{branch_coupling2}, and let $E=E_1\cap E_2$ where
    \begin{align*}
        E_1=\{&{\rm cap}(\xi_j)\le (1+r/16\alpha N_1)h_nT'',\forall 0\le j\le N_1\\
        &\mbox{and }{\rm cap}(\zeta[0,N_1T''])\ge(1-r/16\alpha N_1)h_nN_1T''\},\\
        E_2=\{&{\rm diam}(\xi_j)<R_n/16\alpha,\forall 0\le j\le N_1\}.
    \end{align*}
    By Proposition (\ref{eq:cap_con}), $\mathbb Q[E_1^c]<r/4\alpha$ for all large $n$. By a standard Gaussian estimate and union bound, there are constants $C=C(d,K)>0$ and $C'=C'(d,\alpha)>0$ such that
    $$\limsup_{n\to\infty}\mathbb Q[E_2^c]\le C\delta^{-1}\exp(-C'\delta^{-1/2}).$$
    Thus, we can take $\delta$ small such that $\mathbb Q[E]>1-r/2\alpha$ for all large $n$.

    On $E$, we construct a coupling between $K^\diamondsuit$ and $K^\square$ conditionally on $\zeta$. Let $K'\sim e_\zeta'\coloneq\left(\sum_{i=0}^{N_1-1}e_{\xi_i}\right)^0$, then we can sample $K'$ as follows. First, randomly choose $j'\in\{0,1,...,N_1-1\}$ according to the probability measure proportional to $\sum_{i=0}^{N_1-1}{\rm cap}(\xi_i)\cdot\delta_i$, then choose $K'$ from ${\rm range}(\xi_{j'})$ according to $e_{\xi_{j'}}^0$. We can similarly sample $K^\square$ by taking $j^\square\in\{0,1,...,N_1\}$ according to $\sum_{i=0}^{N_1}\frac {T(\xi_i)}{T(\zeta)}\cdot\delta_i$ and then letting $K^\square=\xi_{j^\square}(t)$, where $t$ is uniformly chosen from $\{0,1,...,T(\xi_{j^\square})\}$.
    
    Note that on $E$, 
    $$e_\zeta(\xi_i)\le{\rm cap}(\xi_j)\le(1+r/16\alpha N_1)h_nT''$$
    for all $0\le i\le N_1$ and 
    $$e_\zeta(\xi_i)\ge{\rm cap}(\zeta[0,N_1T''])-\sum_{j\neq i,j<N_1}{\rm cap}(\xi_j)\ge(1-r/8\alpha) h_nT''$$
    for all $0\le i<N_1$. Therefore, for all small $\delta$,
    \begin{equation}\label{eq:zetasum}
        d_{\rm TV}(e_\zeta,\sum_{i=0}^{N_1-1}e_{\xi_i})\le\sum_{i=0}^{N_1}{\rm cap}(\xi_i)-{\rm cap}(\zeta[0,(N-1)T''])\le\frac r{8\alpha N_1}h_nT(\zeta)+2h_nT''<\frac{1}{8}r\alpha^{-1}h_nT(\zeta)
    \end{equation}
    and 
    \begin{equation}\label{eq:sumuni}
       \sum_{j=0}^{N_1-1}|{\rm cap}(\xi_j)-h_nT''|+h_nT''\le\frac18r\alpha^{-1}h_nT(\zeta)+h_nT''\le \frac3{16}r\alpha^{-1}h_nT(\zeta).
    \end{equation}
    The total mass of $e_\zeta$ is ${\rm cap}(\zeta)>(1-r/16\alpha N_1)h_n N_1T''>\frac78h_nT(\zeta)$ when $\delta$ is small so that $N_1>1$. Thus, by an argument similar to (\ref{eq:add0}), we obtain
    \begin{equation}\label{eq:zetasum'}
        d_{\rm TV}\left(e_\zeta^0,e_\zeta'\right)<r/4\alpha
    \end{equation}
    and 
    \begin{equation}\label{eq:sumuni'}
       \sum_{j=0}^{N_1}\left|s_j-\frac{T(\xi_j)}{T(\zeta)}\right|<r/4\alpha.
    \end{equation}
    Here 
    $$s_j=\begin{cases}
        \frac{{\rm cap}(\xi_j)}{\sum_{i=0}^{N_1-1}{\rm cap}(\xi_i)},&0\le j<N_1;\\
        0,&j=N_1
    \end{cases}$$
    is the probability that $j'=j$ while we sample $K'$ in the aforementioned way, and $\frac{T(\xi_j)}{T(\zeta)}$ is the probability that $j^\square=j$. This indicates that we can couple $j',j^\square$ such that $j'=j^\square$ with probability at least $1-r/4\alpha$.
    Thus, by (\ref{eq:zetasum}) and (\ref{eq:sumuni}), there exists a coupling between $K^\diamondsuit$ and $K'$ such that $K^\diamondsuit=K'$ with probability at least $1-r/4\alpha$, and one between $K'$ and $K^\square$ such that they are both in the range of some $\xi_j$ with probability larger than $1-r/4\alpha$. Hence, we can couple $K^\diamondsuit$ and $K^\square$ such that
    $$\mathbb Q[\exists 0\le j\le N_1\mbox{ s.t. }K^\diamondsuit,K^\square\in{\rm range}(\xi_j)\mid\zeta]>1-r/2\alpha.$$ (\ref{eq:Kds}) then follows because $\mathbb Q[E]>1-r/2\alpha$ and ${\rm diam}(\xi_j)<R_n/16\alpha$ for all $0\le j\le N$ on $E$.
\end{proof}
We now present the proof of Proposition \ref{lclt}.

\begin{proof}[Proof of Proposition \ref{lclt}.]
For all $\mu\in W^{[0,\infty)}$ staying inside $B_0^n$, let $X$, $\tau_i,S_i,i\ge1$ be as in Remark \ref{re:Ksquare}. Apparently, we have
$$\mathbb P\left[\tau_i\ge \frac14kR_n^2\right]\ge\frac14$$
and $\tau_i\le KR_n^2$ almost surely, $i\ge1$.
Therefore, $\frac14kR_n^2\le S_{\alpha-1}\le\alpha KR_n^2$ happens with probability at least $\frac14$.
Let $\overline B_1:=B(\lfloor R_n/2\rfloor\cdot e,R_n/16)$ and $\overline B_2:=B(\lfloor R_n/2\rfloor\cdot e,R_n/8)$ be two smaller concentric boxes inside $\widetilde{B}_{R_n\cdot e}$. By \cite[Theorem 1.2.1]{trove.nla.gov.au/work/15980712}, there exists $C_{13}=C_{13}(k,K)$ such that
$$\widetilde Q^\eta[K_{\alpha-1}^\square\in\overline B_1]=\mathbb P[X_{S_{\alpha-1}}\in\overline B_1]\ge\frac14\cdot\inf_{kR_n^2/4\le j\le\alpha KR_n^2}\mathbb P[X_j\in\overline B_1]\geq 2C_{13}\alpha^{-d/2}$$
for all large $n$. Recall $C_{14}$, $C_{15}$ from Lemma \ref{branch_coupling2}, and take $C_{11}(\alpha)=C_{14}(\alpha,r)$ and $C_{12}(\alpha)=C_{15}(\alpha,r)$, where $r=\frac12C_{13}\alpha^{-d/2}$. Thus, for any $M>C_{11}$, $\theta_1,\theta_2<C_{12}$ and all large $n$, by Lemma \ref{branch_coupling2},
$${\rm d}(K_{\alpha-1},K_{\alpha-1}^\diamondsuit)\le\alpha\cdot 2l_n<R_n/8.$$
Moreover, by Lemma \ref{branch_coupling3},
${\rm d}(K_{\alpha-1}^\diamondsuit,K_{\alpha-1}^\square)<R_n/16$ for all large $n$. Therefore,
$$\overline Q^\eta[K_{\alpha-1}\in \widetilde B_{R_n\cdot e}]\ge Q^\eta[K_{\alpha-1}^\diamondsuit\in \overline B_2]-C_{13}\alpha^{-d/2}/2\geq \widetilde Q^\eta[K_{\alpha-1}^\square\in \overline B_1]-C_{13}\alpha^{-d/2}\geq C_{13}\alpha^{-d/2},$$
finishing the proof.
\end{proof}

Finally, we prove Proposition \ref{Y_alpha} using Proposition \ref{lclt}.
\begin{proof}[Proof of Proposition \ref{Y_alpha}.]
    Recall the function $f$ from Lemma \ref{lemma2.1}, and let $\alpha$ be large such that
    \begin{equation}\label{eq:alpha_condition}
        \frac{1}{2}f(256K)\cdot\frac{k}{2\varepsilon_d}C_{13}\cdot(1+\epsilon/8)^\alpha\alpha^{-d/2}>10.
    \end{equation}
    Recall the constants $C_{11},C_{12}$ from Proposition \ref{lclt}, $C_9$ from Proposition \ref{Y_alpha} and $C_1,C_2$ from Lemma \ref{M_apriori}. Having fixed $\alpha$, we now take $C_9=C_{12}(\alpha)\wedge C_1$, and fix
    \begin{equation}\label{eq:fix_Mtheta1}
     M>C_{11}(\alpha)\vee C_2\mbox{ and }\theta_2<C_1\wedge C_{12}(\alpha).
    \end{equation} Then, by Proposition \ref{lclt}, for all large $n$ and $\eta\in\mathcal{S}$,
    \begin{equation}\label{5.9}
        \overline Q^\eta[K_{\alpha-1}\in \widetilde{B}_{R_n\cdot e}^n]\geq C_{13}\alpha^{-d/2}.
    \end{equation} For $L>0$, recall that $W_L$ is the subset of $W^{[0,\infty)}$ with diameter at most $L$. By Lemma \ref{lemma2.1}, the following holds: for all $x\in\mathbb{Z}^d$,
    \begin{equation}\label{5.10}
        \mu_x(W_{R_n/16})>f(256K).
    \end{equation}
    This lower bound, together with Lemma \ref{le:good_trivial}, implies
    \begin{equation}\label{eq:sprout}
        \overline\mu_{x,\eta}(W_{R_n/8})\geq\frac{1}{2}f(256K).
    \end{equation}
    Let $\xi=|\{\eta\in\overline Y_\alpha:\eta\mbox{ stays within }e+[0,R_n)^d\}|$, we show that $\xi>\beta$ with high probability. We first estimate the first moment. Applying (\ref{eq:stat_lclt}) and (\ref{eq:sprout}) in turn gives
    \begin{equation}\label{5.11}
    \begin{aligned}
        \overline E^\mathcal{S}[\xi]
        \geq\;&\overline P^{\mathcal S}[|Z_{\alpha-1}\cap\widetilde B_{R_n\cdot e}|]\cdot\frac{1}{2}f(256K)\\
        \geq\;&\overline P^{\mathcal S}[|Z_{\alpha-1}|]\cdot C_{13}\alpha^{-d/2}\cdot\frac{1}{2}f(256K)\\
        \geq\;&\frac{k\beta}{2\varepsilon_d}(1+\epsilon/8)^\alpha\cdot C_{13}\alpha^{-d/2}\cdot \frac{1}{2}f(256K)>10\beta.
    \end{aligned}
    \end{equation}
    The third inequality is a consequence of Lemma \ref{M_apriori}, while the last one is due to (\ref{eq:alpha_condition}). Moreover, we have the following control over the variance of $\xi$. For $W\subset \mathscr T_n$, let $\text{var}^W$ denote the variance under $\overline P^W$, and abbreviate $\text{var}^\eta$ when $W=\{\eta\}$. For $i\ge0$, since $\overline Y_i$ only contains typical trajectories whose capacities are well controlled, it is easy to verify that conditioning on $\overline Y_i$, the number of offspring of each element of $\overline Y_i$ follows a Poisson distribution with parameter smaller than $\lambda=2K$. Thus, by induction, we have for all $\eta\in\mathscr T_n$,
    $$\text{var}^\eta(\xi)\leq \overline E^\eta[\xi^2]\leq \overline E^\eta[|\overline Y_\alpha|^2]\le\bigl[2(\lambda^2+\lambda)\bigr]^\alpha.$$ 
    Therefore,
    \begin{equation}\label{5.12}
        \text{var}^\mathcal{S}(\xi)=\sum_{\eta\in\mathcal{S}}\text{var}^\mathcal{\eta}(\xi)\leq\beta\bigl[2(\lambda^2+\lambda)\bigr]^\alpha.
    \end{equation}
    By (\ref{5.11}), (\ref{5.12}) and the Markov inequality,
    $$\overline P^\mathcal{S}[\xi<\beta]\leq\overline P^\mathcal S\left[\left|\xi-\overline E^\mathcal S[\xi]\right|>9\beta\right]\le9^{-2}\beta^{-2}{\rm var}^\mathcal S(\xi)\le9^{-2}\beta^{-1}[2(\lambda^2+\lambda)]^\alpha.$$
    The conclusion then follows by taking $\beta$ large.
\end{proof}

\section*{Acknowledgments}
We warmly thank Eviatar Procaccia for fruitful discussions. YB and XL are supported by National Key R\&D Program of China (No.\ 2020YFA0712900 and No.\ 2021YFA1002700). BR is partially supported by the grant NKFI-FK-142124 of NKFI (National Research, Development and Innovation Office), and the ERC Synergy Grant No.\ 810115 - DYNASNET. YZ is supported by NSFC-12271010, National Key R\&D Program of China (No.\ 2020YFA0712902) and the Fundamental Research Funds for the Central Universities and the Research Funds of Renmin University of China 24XNKJ06.

\appendix
\section{Proofs of technical lemmas}
In this section, we present the proofs of Lemmas \ref{M_apriori}, \ref{le:intersect_cap} and \ref{le:free_varphi}. For the proof of Lemma \ref{M_apriori}, we introduce the following analog of Lemma \ref{le:good_trivial} for random walk paths whose law involve conditioning.
\begin{lemma}\label{le:good_trivial2}
    Let $\psi$ be the same as in Lemma \ref{le:good_trivial} and take $\eta\in \mathscr T_n,x \in\widehat\eta$ and $m,l\in\mathbb N$ satisfying $kR_n^2\le m+l\le KR_n^2$. Then, for all large $n$,
    $$\mu(x,m,l;{\rm range}(\eta))[\mathscr T_n^c]<\psi\left((M-1)/K^{1/2}\right)+4\theta_1.$$
\end{lemma}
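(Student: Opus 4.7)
The goal is to control the atypicality of a trajectory drawn from $\mu(x,m,l;{\rm range}(\eta))$ by using the corresponding bound for the unconditioned measure $\mu(x,m,l)$ supplied by Lemma~\ref{le:good_trivial}. The plan is to construct, for any admissible $(m,l)$ with $kR_n^2\le m+l\le KR_n^2$, a coupling of $\zeta\sim \mu(x,m,l)$ with $\zeta'\sim \mu(x,m,l;{\rm range}(\eta))$ such that on an event of probability at least $1-4\theta_1$ the two trajectories agree closely (pointwise within $l_n$, and as exact translates of each other outside a short initial piece); and then to transfer typicality from $\zeta$ (under slightly tightened parameters) to $\zeta'$.

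The first step exploits reversibility of the simple random walk: the backward half of a sample from $\mu(x,m,l;A)$ is distributed as the time-reverse of an SRW started at $x$ conditioned on $\widetilde H_A>m$, whereas under $\mu(x,m,l)$ it is simply an SRW from $x$. The forward halves are in both cases i.i.d.\ $P^x$ and can be coupled identically. Thus the problem reduces to coupling, in forward time, an unconditioned SRW $Y\sim P^x$ with a conditioned one $Y^c\sim P^x[\cdot\mid\widetilde H_\eta>m]$, both run for $m$ steps. Two couplings are chained: first, Lemma~\ref{lem:forget} couples $Y\sim P^x$ with $Y^\infty\sim P^x[\cdot\mid\widetilde H_\eta=\infty]$ so that, on an event of probability $\ge 1-2\theta_1$, $|Y_j-Y^\infty_j|\le l_n$ for every $j\ge 0$ and the two paths differ by a fixed translation for $j\ge t_n$; second, $Y^\infty$ is coupled with $Y^c$ using the elementary identity
$$\bigl\|P^x[\,\cdot\mid\widetilde H_\eta>m]-P^x[\,\cdot\mid\widetilde H_\eta=\infty]\bigr\|_{\rm TV}=1-\frac{e_\eta(x)}{P^x[\widetilde H_\eta>m]}\le \theta_1,$$
valid for $m\ge t_n$ thanks to the quantitative bound $P^x[\widetilde H_\eta>t_n]\le(1+\theta_1)e_\eta(x)$ built into the definition of $\widehat\eta$ in $d=4$ (with room to spare in $d\ge 5$). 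Concatenating yields a coupling of $Y^c$ with $Y$ whose failure probability is at most $4\theta_1$.

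On the good coupling event the two trajectories $\zeta,\zeta'$ are identical on the forward stretch $[m,m+l]$, differ by a fixed translation of norm at most $l_n$ on the bulk of the backward stretch, and differ pointwise by at most $l_n$ on the length-$t_n$ transition piece. Because $l_n\ll R_n$, $t_n\ll R_n^2$, and the capacity/volume of any SRW segment of length $t_n$ is negligible compared with $\widetilde C_n$, each of the four typicality requirements $\mathcal E_1,\mathcal E_2,\mathcal E_3,\mathcal E_4$ transfers from $\zeta$ to $\zeta'$ provided $\zeta$ satisfies them with mildly strengthened parameters (in particular, the diameter bound in $\mathcal E_1$ tightened from $MR_n$ to $(M-1)R_n$, and marginally sharper tolerances in the capacity bounds of $\mathcal E_2$ and $\mathcal E_4$, which are affordable since Lemma~\ref{le:good_trivial}'s bounds on these events decay to $0$ with $n$). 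Invoking Lemma~\ref{le:good_trivial} with parameter $M-1$ in place of $M$ therefore controls the atypicality of $\zeta$ by $\psi((M-1)/K^{1/2})$, and adding the coupling error gives the required bound $\psi((M-1)/K^{1/2})+4\theta_1$.

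The main technical obstacle is the regime $m<t_n$, where the TV identity above degenerates (the denominator $P^x[\widetilde H_\eta>m]$ may be much larger than $e_\eta(x)$ and the two conditionings really do differ substantially). Here I would argue separately: the backward segment has length at most $t_n=R_n^2\log^{-10}R_n\ll R_n^2$, so with overwhelming probability it remains confined to a ball of radius $\sqrt{t_n}\ll R_n$ around $x$, whence $\mathscr T_n$-typicality of $\zeta'$ is dictated almost entirely by the unconditioned forward segment $X[0,l]$ (handled directly by Lemma~\ref{le:good_trivial}), and the conditioning on $\widetilde H_\eta>m$ can be realized by a rejection coupling with failure probability $1-P^x[\widetilde H_\eta>m]$; when this failure probability exceeds the target $4\theta_1$ one needs the additional observation that any backward path compatible with $\widetilde H_\eta\le m$ is itself so short and spatially localized near $x$ that its contribution to each of $\mathcal E_1$–$\mathcal E_4$ is dwarfed by the forward part, so the typicality verdict essentially factors through the forward segment alone.
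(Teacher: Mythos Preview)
Your coupling construction (chain Lemma~\ref{lem:forget} with a TV comparison between conditioning on $\widetilde H_\eta>m$ and on $\widetilde H_\eta=\infty$, then share the forward leg) is exactly what the paper does, and your transfer of $\mathcal E_1$ and $\mathcal E_4$ via the coupling with a slightly tightened diameter parameter matches the paper's argument.

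Where you diverge is in handling $\mathcal E_2$ and $\mathcal E_3$. You propose to transfer these through the coupling as well, but this is more delicate than you indicate. On the coupling event, ${\rm range}(\zeta')$ is obtained from ${\rm range}(\zeta)$ by translating the initial sub-trajectory $\zeta[0,m-t_n]$ by a fixed vector $v$ with $|v|\le l_n$ and replacing a length-$t_n$ piece. The replacement costs at most $t_n$ in capacity, but the translation is the real issue: there is no general inequality comparing ${\rm cap}((A+v)\cup C)$ with ${\rm cap}(A\cup C)$, and in particular the \emph{lower} bound ${\rm cap}(\zeta')\ge(1-\theta_1^2)\varepsilon_d T$ does not follow from the corresponding bound for $\zeta$ by any soft argument. (Your upper bound for ${\rm cap}(\zeta')$ can be rescued using the piecewise bounds of $\mathcal E_4$ as in the paper's equation analogous to \eqref{eq:c+c-c}, but no symmetric trick gives the lower bound.) For $\mathcal E_3$ there is a parallel obstruction in $d\ge 5$: there $l_n=R_n^{1-c/4}\gg L_n=R_n^{(2+c)/(d-2)}$, so the inclusion $B(\zeta',L_n)\subset B(\zeta,L_n+l_n)$ is useless.

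The paper sidesteps both difficulties by not using the coupling at all for $\mathcal E_2,\mathcal E_3$: since $x\in\widehat\eta$ forces the conditioning event $\{\zeta[0,m)\cap\eta=\emptyset\}$ to have $\mu(x,m,l)$-probability at least $\theta_2(1+\log\mu_1(\rho_n)\mathbbm 1_{d=4})^{-1}$, and since $\mu(x,m,l)[\mathcal E_2^c\cup\mathcal E_3^c]$ decays strictly faster (polylogarithmically for $\mathcal E_2$, stretched-exponentially for $\mathcal E_3$), the ratio $\mu(x,m,l;\eta)[\mathcal E_2^c\cup\mathcal E_3^c]\le\mu(x,m,l)[\mathcal E_2^c\cup\mathcal E_3^c]\big/\mu(x,m,l)[\{\zeta[0,m)\cap\eta=\emptyset\}]$ still tends to zero. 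This is both simpler and avoids the stability-of-capacity issue entirely.

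Your separate discussion of $m<t_n$ is overcomplicated: in that regime the backward segment has length at most $t_n$, so its contribution to every one of $\mathcal E_1$--$\mathcal E_4$ is negligible, and the conditioning event still has probability at least $e_\eta(x)$, so the same ratio argument works uniformly in $m$.
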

\begin{proof}
    By Lemma \ref{lem:forget}, we can construct a coupling $\mathbb Q$ between $\zeta\sim\mu(x,m,l)$ and \\
    $\zeta^*\sim\mu(x,m,l;{\rm range}(\eta))$ such that the event
    $$E=\{\zeta[0,m-t_n]\simeq\zeta^*[0,m-t_n],\zeta[m,m+l]\simeq\zeta^*[m,m+l],|\zeta(s)-\zeta^*(s)|\le l_n,\forall0\le s\le m+l\}$$
    happens with probability at least $1-2\theta_1-R_n^{-c/2}$.
    Taking advantage of the coupling, we have
    \begin{align*}
        \mu(x,m,l;{\rm range}(\eta))[\mathcal E_1^c]=\;&\mathbb Q[{\rm range}(\zeta^*)\not\subset B(x,MR_n)]\\
    \le\;&\mathbb Q[E\cap\{{\rm range}(\zeta^*)\not\subset B(x,MR_n)\}]+2\theta_1+R_n^{-c/2}\\
    \le\;&\mathbb Q[{\rm range}(\zeta)\not\subset B(x,MR_n-l_n)]+2\theta_1+R_n^{-c/2}\\
    \le\;&\mu(x,m,l)[\mathcal E_1(k,K,M-1)^c]+2\theta_1+R_n^{-c/2}.
    \end{align*}
    It then follows from (\ref{eq:cE1}) that
    \begin{equation}\label{eq:cE1'}
        \mu(x,m,l;{\rm range}(\eta))[\mathcal E_1^c]\le\frac34\psi\left((M-1)/K^{1/2}\right)+2\theta_1
    \end{equation}
    for all large $n$.
    Likewise, when $d=4$, since on the event $E$, $$\begin{aligned}
        {\rm cap}(\zeta^*[jT_n',(j+1)T_n'\wedge T(\zeta^*)])\le\;&{\rm cap}(\zeta[jT_n',(j+1)T_n'\wedge T(\zeta)])+t_n\\
        \le\;&{\rm cap}(\zeta[jT_n',(j+1)T_n'\wedge T(\zeta)])+\frac12\theta_1^2\cdot\frac{\pi^2}{8}T_n'\log^{-1}\mu_1(\rho_n),
    \end{aligned}$$ 
    and
    $$\begin{aligned}
        {\rm diam}(\zeta^*[jT_n',(j+1)T_n'\wedge T(\zeta^*)])\le\;&{\rm diam}(\zeta[jT_n',(j+1)T_n'\wedge T(\zeta)])+l_n\\
        \le\;&{\rm diam}(\zeta[jT_n',(j+1)T_n'\wedge T(\zeta)])+\frac14T^{1/2}\log^{-c}\mu_1(\rho_n)
    \end{aligned}$$ for all $j\in\mathbb N$ and all large $n$, we obtain
\begin{equation}\label{eq:cE4'}\begin{aligned}
        &\mu(x,m,l;{\rm range}(\eta))[\mathcal E_4^c]\\
        \le\;&\mathbb Q\Big[\exists j\in\mathbb N,{\rm cap}\left(\zeta[jT_n',(j+1)T_n'\wedge T(\zeta)]\right)>(1+\frac12\theta_1^2)\frac{\pi^2}8T_n'\log^{-1}\mu_1(\rho_n)\\
        &\mbox{or }{\rm diam}\left(\zeta[jT_n',(j+1)T_n'\wedge T(\zeta)]\right)>\frac14T^{1/2}\log^{-c}\mu_1(\rho_n)\Big]+2\theta_1+R_n^{-c/2}\\
\le\;&C\log^{-1}R_n+2\theta_1+R_n^{-c/2},
    \end{aligned}\end{equation}
    where we have estimated the probability appearing in the second and third line similarly as in (\ref{eq:cE4}). For $d\geq 5$, we can get the same upper bound in a similar fashion.
    Finally, since $x\in\widehat\eta$, $$\mu(x,m,l)[\mathcal E]\ge \theta_2\log^{-1}\mu_1(\rho_n)$$ for all large $n$, where 
    $$\mathcal E=\{\eta'\in W^{[0,\infty)}:\eta'[0,m)\cap{\rm range}(\eta)=\emptyset\}.$$ Thus, by (\ref{eq:cE2}) and (\ref{eq:cE3}),
    \begin{equation}\label{eq:cE23'}\begin{aligned}
        &\mu(x,m,l;{\rm range}(\eta))[\mathcal E_2^c\cup\mathcal E_3^c]\le \frac{\mu(x,m,l)[\mathcal E_2^c\cup\mathcal E_3^c]}{\mu(x,m,l)[\mathcal E]}\\
        \le\;&\frac{C\log^{-2}R_n+C\log^{8c/3}R_n\exp(-\log^{2c/3}R_n)}{\log^{-1}R_n}\le C\log^{-1}R_n
    \end{aligned}\end{equation}
    for all large $n$. The conclusion follows from (\ref{eq:cE1'}), (\ref{eq:cE4'}) and (\ref{eq:cE23'}).
\end{proof}
\begin{proof}[Proof of Lemma \ref{M_apriori}]
    Recall the notation $\mu_2^{(m)}(\rho_n),m>0$ which is the tail of the second moment of $\rho_n$ from Section 2. Since $(\rho_n)_{n\ge 0}$ is an appropriate family, by (\ref{eq:appropriate}), we can choose $K>1$ large such that
    \begin{equation}\label{eq:propK}
        \mu_2^{(KR_n^2)}(\rho_n)\le\frac 1{32}\epsilon\mu_2(\rho)
    \end{equation}
    for all $n\in\mathbb N$.
    Fix a $K$ such that the above estimate holds, and arbitrarily fix $k$ satisfying $k^2<\epsilon/32$.
    For all $\theta_1<\epsilon/64K$ and large $M$  so that $\psi((M-1)/K^{1/2})<\epsilon/16K$, we obtain by Lemmas \ref{le:good_trivial} and \ref{le:good_trivial2} that 
    $$\mu_x(\mathscr T_n)\wedge\overline{\mu}_{x,\eta}^*(\mathscr T_n)>1-\epsilon/8K>1-\epsilon/8.$$
    Now we aim at estimating $e_\zeta(\widehat\zeta)$. By the fact that $\zeta\in\mathscr T_n\subset\mathcal E_2$,
    \begin{equation}\label{eq:cap_zeta1}
        {\rm cap}(\zeta)\ge
        (1-\theta_1^2)\varepsilon_dT(\zeta)(1+\log\mu_1(\rho_n)\mathbbm1_{d=4})^{-1}.
    \end{equation}
    Let $\zeta^*\sim\overline\mu_{x,\eta}^*$, then by (\ref{eq:propK}) and $k^2<\epsilon/32$,
    $$\begin{aligned}
        \mathbb E[T(\zeta^*)]=\;&\frac{\sum_{kR_n^2\le m\le KR_n^2}m^2\rho_n(m)}{\sum_{kR_n^2\le m\le KR_n^2}(m+1)\rho_n(m)}\ge\frac{\mu_2(\rho_n)-\mu_2^{(KR_n^2)}(\rho_n)-(kR_n^2)^2}{\mu_1(\rho_n)+1}\\
        \ge\;&\frac{\mu_2(\rho_n)-\frac1{32}\epsilon\mu_2(\rho_n)-k^2\mu_2(\rho_n)}{\mu_1(\rho_n)+1}\ge(1-\epsilon/16)\frac{\mu_2(\rho_n)}{\mu_1(\rho_n)}.
    \end{aligned}$$
    Thus, we obtain 
    $$\begin{aligned}
        \mathbb E[T(\zeta)]\ge\;& \mathbb E[T(\zeta^*)1_{\zeta^*\in T_n}]\ge\mathbb E[T(\zeta^*)]-\frac\epsilon{8K}\cdot KR_n^2\\
        \ge\;&(1-\epsilon/16)\frac{\mu_2(\rho_n)}{\mu_1(\rho_n)}-\frac18\epsilon\frac{\mu_2(\rho_n)}{\mu_1(\rho_n)}=(1-\frac3{16}\epsilon)\frac{\mu_2(\rho_n)}{\mu_1(\rho_n)},
    \end{aligned}$$
    where in the second inequality we have used $\overline\mu_{x,\eta}^*(\mathscr T_n^c)<\epsilon/8K$ and the fact that $T(\zeta^*)\le KR_n^2$ almost surely.
    The above inequality and (\ref{eq:cap_zeta1}) together imply
    \begin{equation}\label{eq:cap_zeta2}
        \mathbb E[{\rm cap}(\zeta)]\ge\begin{cases}
        (1-\theta_1^2)(1-\frac3{16}\epsilon)\frac{\pi^2}8\frac{\mu_2(\rho_n)}{\mu_1(\rho_n)}\log^{-1}\mu_1(\rho_n),&d=4;\\
        (1-\theta_1^2)(1-\frac3{16}\epsilon)\varepsilon_d\frac{\mu_2(\rho_n)}{\mu_1(\rho_n)},&d\ge5.
        \end{cases}
    \end{equation}
    Noting that $e_\zeta(\widehat\zeta)={\rm cap}(\zeta)-e_\zeta({\rm range}(\zeta)\setminus\widehat\zeta)$, the estimate (\ref{eq:equi_lower}) holds true for sufficiently small $\theta_2$ and $\theta_1$ due to (\ref{eq:cap_zeta2}) and (\ref{eq:equi_setminus}).
\end{proof}

Now we turn to the lemmas of Section \ref{sec:5}.
\begin{proof}[Proof of Lemma \ref{le:intersect_cap}]
    We begin with the easier high-dimensional case. By the LCLT \cite[Theorem 1.2.1]{trove.nla.gov.au/work/15980712}, there is some $C=C(d)>0$ such that
    $$P^y[X_j=z]<Cj^{-d/2}$$
    for all $j>0$, $z\in\mathbb Z^d$. Hence, we have
    $$E^y\left[\left|X[T,KR_n^2]\cap A\right|\right]\le E^y[\sum_{j=T}^\infty 1_{X_j\in A}]\le C\sum_{j=T}^\infty|A|j^{-d/2}\le C|A|T^{1-d/2}.$$
    
    Now, we assume $d=4$. We divide the trajectory $X[T,KR_n^2]$ into pieces of length $T_n'=\lfloor R_n^2\log^{-1} R_n\rfloor$, and write $\xi_j=X[T+(j-1)T_n',T+jT_n']$, $x_j=\xi_j(0)$ for all $j\le\lceil KR_n^2/T_n'\rceil$. Note that $x_j$ and ${\rm cap}({\rm range}(\xi_j)\cap A,R_n^2)$ are independent, so we have
    \begin{align*}
        &E^y[{\rm cap}({\rm range}(\xi_j)\cap A,R_n^2)]\\
        =\;& E^y[{\rm cap}({\rm range}(\xi_j)\cap A,R_n^2)1_{x_j\in B(A,L_n)}]+E^y[{\rm cap}({\rm range}(\xi_j)\cap A,R_n^2)1_{x_j\notin B(A,L_n)}]\\
        \le\;&E^y[{\rm cap}({\rm range}(\xi_j),R_n^2\log^{-1}R_n)]\cdot P^y[x_j\in B(A,L_n)]+E^y[T_n'1_{\xi_j\not\subset B(x_j,L_n)}1_{x_j\notin B(A,L_n)}]\\
        \le\;&CR_n^2\log^{-2}R_n\cdot P^y[x_j\in B(A,L_n)]+T_n'\cdot P^{\bm0}[X[0,T_n']\not\subset B(0,L_n)]\\
        \le\;&CR_n^2\log^{-2}R_n|B(A,L_n)|(T+jT_n')^{-2}+T_n'\exp(-{L_n^2}/{2T_n'}),
    \end{align*}
    where in the last inequality we have used the LCLT and a standard estimate on the transition probability of the simple random walk.
    Take a summation over $j\in\{1,...,\lceil KR_n^2/T_n'\rceil\}$, and we obtain
    \begin{align*}
        &E^y[{\rm cap}(X[T,KR_n^2]\cap A,R_n^2)]\\
        \le\;& CR_n^2\log^{-2}R_n|B(A,L_n)|(T_n')^{-2}\sum_{j=1}^\infty(T/T_n'+j)^{-2}+CR_n^2\exp(-\frac12\log^{1-2c}R_n)\\
        \le\;&C|B(A,L_n)|T^{-1}\log^{-1} R_n+CR_n^2\exp(-\frac12\log^{1-2c}R_n).
    \end{align*}
    It remains to show that the second term in the last line is much smaller than the first term. This follows from the trivial bound
    $$|B(A,L_n)|\ge L_n^4=R_n^4\log^{-4c}R_n.$$
    Now, we have proved the four-dimensional case.
\end{proof}
\begin{proof}[Proof of Lemma \ref{le:free_varphi}]
    When $d\ge 5$, we have
    \begin{align*}
        &E^{y,z}[\varphi^{(\rho_n)}(X^1[0,K(R_n)^2],X^2[0,K(R_n)^2])]\\
        \le\;& E^{y,z}\left[\sum_{j,l=0}^{KR_n^2}g(X_j^1,X_l^2)\right]
        =\sum_{y_1,z_1\in\mathbb Z^2}\left(\sum_{j=0}^{KR_n^2}p_j(y,y_1)\right)g(y_1,z_1)\left(\sum_{j=0}^{KR_n^2}p_j(z_1,z)\right)\\
        =\;&\sum_{\substack{0\le t_1,t_2\le KR_n^2\\s\ge0}}\sum_{y_1,z_1\in\mathbb Z^d}p_{t_1}(y,y_1)p_s(y_1,z_1)p_{t_2}(z_1,z)        =\sum_{\substack{0\le t_1,t_2\le KR_n^2\\s\ge0}}p_{t_1+t_2+s}(y,z)\\
        =\;&\sum_{j=0}^{KR_n^2}j^2p_j(y,z)+K^2R_n^4\sum_{j>KR_n^2}p_j(y,z)        \le C\sum_{j=0}^{KR_n^2}j^{2-d/2}+CR_n^4\sum_{j>KR_n^2}j^{-d/2}.
    \end{align*}
    The conclusion then follows from direct computation of the series.

    When $d=4$, we proceed as in the last lemma, and let $\xi_j^i=X^i[(j-1)L_n,jL_n]$, $x_J^i=\xi_j^i(0)$ for $i=1,2,j>0$. For $U,V\subset\mathbb Z^4$, write $$\widetilde g(U,V)=\sup_{u'\in B(U,L_n^{1/2+c}),v'\in B(V,L_n^{1/2+c})}g(u',v'),$$
    and abbreviate $\widetilde g(u,v)=\widetilde g(\{u\},\{v\})$.
    Note that by (\ref{eq:Tcap_ex}) and (\ref{eq:rho_bound4}),
    \begin{align*}
        &E^{\bm0}[{\rm cap}^{(\rho_n)}(X[0,L_n])]\le L_n\log^{-10} R_n+E^{\bm0}[{\rm cap}(X[0,L_n],R_n^2)]\\
        \le\; &L_n\log^{-10}R_n+E^{\bm0}[{\rm cap}(X[0,L_n],L_n)]\le CL_n\log^{-1}R_n
    \end{align*}
    for some $C>0$.
    Thus, conditioning on $x_j^1,x_l^2$, such that $|x_j^1-x_l^2|>3L_n^{1/2+c}$, we have
    \begin{equation}\label{eq:varphi_bound1}\begin{aligned}
        &E^{y,z}[\varphi^{(\rho_n)}(\xi_j^1,\xi_l^2)|x_j^1,x_l^2]\\
        \le\;&E^{y,z}[{\rm cap}^{(\rho_n)}(\xi_j^1){\rm cap}^{(\rho_n)}(\xi_l^2)]\widetilde g(x_j^1,x_l^2)+2P^{\bm0}[X[0,L_n]\not\subset B(0,L_n^{1/2+c})]\cdot L_n^2\\
        \le\;&E^{\bm0}[{\rm cap}^{(\rho_n)}(X[0,L_n])]^2\widetilde g(x_j^1,x_l^2)+CL_n^2\exp\left(-\frac12L_n^{2c}\right)\\
        \le\;&CL_n^2\log^{-2}R_n\cdot g(x_j^1,x_l^2)+CL_n^2\exp\left(-\frac12L_n^{2c}\right).
    \end{aligned}\end{equation}

    Moreover, by the same argument as in the high-dimensional case, we obtain
    \begin{equation}\label{eq:varphi_bound2}\begin{aligned}
        E^{y,z}[\varphi^{(\rho_n)}(\xi_j^1,\xi_l^2)|x_j^1,x_l^2]\le CL_n.
    \end{aligned}\end{equation}
    Combining (\ref{eq:varphi_bound1}) and (\ref{eq:varphi_bound2}),
    \begin{equation}\label{eq:varphi_bound4}\begin{aligned}
        E^{y,z}[\varphi^{(\rho_n)}(\xi_j^1,\xi_l^2)]\le\;& CL_n^2\log^{-2}R_n \cdot E^{y,z}[g(x_j^1,x_l^2)]+CL_n^2\exp\left(-\frac12L_n^{2c}\right)\\&+CL_n P^{y,z}[|x_j^1-x_l^2|\le 3L_n^{1/2+c}].
    \end{aligned}\end{equation}
    Now, we bound the terms on the right-hand side of (\ref{eq:varphi_bound4}) from above. On the one hand, note that the following inequalities hold if $j,l\le KR_n^2/L_n$, $j+l>R_n\log^{-2+c}R_n$:
    \begin{equation*}
        P^{y,z}[|x_j^1-x_l^2|\le 3L_n^{1/2+c}]\le\frac{CL_n^{2+4c}}{(R_n^2\log^{-2}R_n)^2}\le CR_n^{-2+4c}\log^{4}R_n.
    \end{equation*}
    Thus, we have
    \begin{equation}\label{eq:var_bound5}\begin{aligned}
        \sum_{\substack{j,l\le KR_n^2/L_n\\j+l>R_n\log^{-2+c}R_n}}P^{y,z}[|x_j^1-x_l^2|\le 3L_n^{1/2+c}]\le\;& CR_n^{-2+4c}\log^4 R_n\cdot R_n^4L_n^{-2}\\\le\;& CR_n^{4c}\log^{4+2c}R_n.
    \end{aligned}\end{equation}
    On the other hand, applying twice the techniques in the proof of the case where $d\ge5$, we get
    \begin{equation}\label{eq:var_bound6}
        E^{y,z}\left[\varphi^{(\rho_n)}(X^1[0,R_n^2\log^{-2}R_n],X^2[0,R_n^2\log^{-2}R_n])\right]\le CR_n^2\log^{-2}R_n;
    \end{equation}
    \begin{equation}\label{eq:var_bound7}
        E^{y,z}\left[\sum_{j,l=1}^{\lceil KR_n^2/L_n\rceil}g(x_j^1,x_l^2)\right]\le CR_n^2L_n^{-2}.
    \end{equation}
    Therefore, by (\ref{eq:varphi_bound4})-(\ref{eq:var_bound7}),
    \begin{align*}
        &E^{y,z}\left[\varphi^{(\rho_n)}(X^1[0,KR_n^2],X^2[0,KR_n^2])\right]\\
        \le\;&E^{y,z}\left[\varphi^{(\rho_n)}(X^1[0,R_n^2\log^{-2}R_n],X^2[0,R_n^2\log^{-2}R_n])\right]
        +\sum_{\substack{j,l\le KR_n^2/L_n\\j+l>R_n\log^{-2+c}R_n}}E^{y,z}[\varphi^{(\rho_n)}(\xi_j^1,\xi_l^2)]\\
        \le\;&CR_n^2\log^{-2}R_n+CL_n^2\log^{-2} R_n\cdot E^{y,z}\left[\sum_{j,l=1}^{\lceil KR_n^2/L_n\rceil}g(x_j^1,x_l^2)\right]
        \\&+CL_n^2\exp\left(-\frac12L_n^{2c}\right)\cdot(KR_n^2/L_n)^2+CL_n\sum_{\substack{j,l\le KR_n^2/L_n\\j+l>R_n\log^{-2+c}R_n}}P^{y,z}[|x_j^1-x_l^2|\le 3L_n^{1/2+c}]\\
        \le\;&CR_n^2\log^{-2}R_n+CL_n^2\log^{-2}R_n\cdot R_n^2L_n^{-2}+C+CL_n\cdot R_n^{4c}\log^{4+2c}R_n\\
        \le\;& CR_n^2\log^{-2}R_n,
    \end{align*}
    proving the lemma.
\end{proof}

	\bibliographystyle{plain}
	\bibliography{references}

\end{document}